\pgfplotsset{compat=1.7}
\theoremstyle{plain} 
\newtheorem{thm}{Theorem}[section]
\newtheorem{cor}[thm]{Corollary}
\newtheorem{lem}[thm]{Lemma} 
\newtheorem{fact}[thm]{Fact} 
\newtheorem{obs}[thm]{Remark}
\theoremstyle{remark}
\newcommand{\E}{{\mathbb{E}}}   
\newcommand{\Prob}{{\mathbb{P}}}   
\title{Limit Laws for Critical Dispersion on Complete Graphs}
\author{
Umberto~De~Ambroggio\thanks{LMU Munich, Department of Mathematics, Theresienstr.~39, 80333 Munich, Germany. Email: \texttt{\{deambrog,makai,kpanagio,steibel\}@math.lmu.de}. Supported by ERC Grant Agreement 772606-PTRCSP.}
\and Tam\'as~Makai\footnotemark[1]
\and Konstantinos~Panagiotou\footnotemark[1]
\and Annika~Steibel\footnotemark[1]
}
\begin{document}

\maketitle

\begin{abstract}
We consider a synchronous process of particles moving on the vertices of a graph~$G$, introduced by Cooper, McDowell, Radzik, Rivera and Shiraga (2018). Initially,~$M$ particles are placed on a vertex of~$G$. In subsequent time steps, all particles that are located on a vertex inhabited by at least two particles jump independently to a neighbour chosen uniformly at random. The process ends at the first step when no vertex is inhabited by more than one particle; we call this (random) time step the \textit{dispersion time}.

In this work we study the case where $G$ is the complete graph on $n$ vertices and the number of particles is $M=n/2+\alpha n^{1/2} + o(n^{1/2})$, $\alpha\in \mathbb{R}$. This choice of $M$ corresponds to the critical window of the process, with respect to the dispersion time.
We show that the dispersion time, if rescaled by $n^{-1/2}$, converges in $p$-th mean, as $n\rightarrow \infty$ and for any $p \in \mathbb{R}$, to a continuous and almost surely positive random variable $T_\alpha$.
We find that $T_\alpha$ is the absorption time of a standard logistic branching process, thoroughly investigated by Lambert (2005), and we determine its expectation. In particular, in the middle of the critical window we show that $\mathbb{E}[T_0] = \pi^{3/2}/\sqrt{7}$, and furthermore we formulate explicit asymptotics when~$|\alpha|$ gets large that quantify the transition into and out of the critical window. We also study the (random) \emph{total number of jumps} that are performed by the particles until the dispersion time is reached. In particular, we prove that it centers around $\frac27n\ln n$ and that it has variations linear in $n$, whose distribution we can describe explicitly. 

~\\
\noindent
Mathematics Subject Classification. 05C81, 60C05, 60F05, 60H30.  
\end{abstract}

\section{Introduction}

The \textit{dispersion process} introduced by Cooper, McDowell, Radzik, Rivera and Shiraga \cite{CMRRS18} consists of particles moving on the vertices of a given graph $G$. A particle is said to be~\emph{happy} if there are no other particles occupying the same vertex and~\emph{unhappy} otherwise.
Initially,~$M \ge 2$ (unhappy) particles are placed on some vertex of $G$.  
Subsequently, at discrete time steps, all unhappy particles move \textit{simultaneously} and \textit{independently} to a neighbouring vertex selected uniformly at random, while the happy particles remain in place. The process terminates at the first time step at which all particles are happy; we call this (random) time step the \emph{dispersion~time}. 

It is clear that if the number of particles is small --  compared to the number of vertices in the graph -- then the dispersion time should be small as well. Intuitively, increasing the number of particles makes it more and more difficult for the particles to disperse quickly.
This transition from 'fast' to 'slow' dispersion is quite well-understood and sharp when the underlying graph is the complete graph on $n$ vertices with loops, in which case we write $T_{n,M}$ for the dispersion time started with $M$ particles at an arbitrary vertex (see further below for a precise definition of the model).
The typical order of $T_{n,M}$ changes rather abruptly around $M = n/2$. Indeed, if we write $M = M(n) =(1+\varepsilon)n/2 \in \mathbb{N}$ for some sequence $\varepsilon = \varepsilon(n) \in [-1,1]$, then in~\cite{CMRRS18} it was established that~$T_{n,M}$ is typically
\begin{itemize}
\setlength{\itemsep}{1pt}
    \item  at most logarithmic in $n$ when $\limsup_{n \to \infty} \varepsilon < 0$ and
    \item  at least exponential in  $n$ when $\liminf_{n\to\infty} \varepsilon > 0$.
\end{itemize}
The details of this apparent and abrupt transition from logarithmic to exponential time are obviously of great interest and were investigated further in~\cite{DMP23}, where the authors studied the typical order and the tails of $T_{n,M}$ when $\varepsilon = o(1)$, that is, when $M = n/2 + o(n)$. In this setting they showed that for any constant $C > 0$, if $\varepsilon \le -C n^{-1/2}$, then the process typically finishes in $\Theta(|\varepsilon|^{-1}\ln(\varepsilon^2 n))$ steps, while if $\varepsilon\ge C n^{-1/2}$, then a much larger number $\varepsilon^{-1}\exp(\Theta(\varepsilon^2 n))$ of steps is required. Moreover, within the \emph{critical window} corresponding to the range $|\varepsilon|=O(n^{-1/2})$, they showed that the process typically runs for $\Theta(n^{1/2})$ steps, making the  transition into and out of the critical window smooth, see also Figure~\ref{fig:smoothtransition}.

\begin{figure}
\centering
\begin{tikzpicture}
    \draw[->] (0,0) -- (9.7,0);
    
    \node[above] at (2,0.25) {$|\varepsilon|^{-1} \ln(\varepsilon^2n )$};
    \node[above] at (5,0.25) {$n^{1/2}$};
    \node[above] at (8,0.25) {$\varepsilon^{-1} \exp(\Theta(\varepsilon^2 n))$};
    
    \node[above] at (10.2,0.06) {\small $T_{n,M}$};
    \node[below] at (10.2,-0.06) { \small $\varepsilon$};
    
    \node[below] at (3.5,-0.2) { $-Cn^{-1/2}$ };
    \node[below] at (5,-0.3) { $0$ };
    \node[below] at (6.7,-0.2) { $Cn^{-1/2}$ };
    
    \draw (3.5,0.4) -- (3.5,-0.15);
    \draw (5,0.2) -- (5,-0.15);
    \draw (6.5,0.4) -- (6.5,-0.15);
\end{tikzpicture}

\captionsetup{width=.9\linewidth}
\caption{The typical order of $T_{n,M}$ when $M = (1+\varepsilon)n/2$ and  $|\varepsilon| = o(1)$. Note that $|\varepsilon^{-1}|\ln(\varepsilon^2 n)$ and $\varepsilon^{-1} \exp(\varepsilon^2 n)$ are in $\Theta(n^{1/2})$ when $|\varepsilon| = \Theta(n^{-1/2})$, and so the transition into and out of the critical window is smooth.}

\label{fig:smoothtransition}
\end{figure}
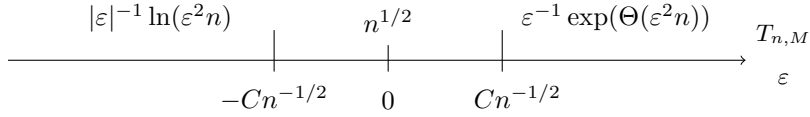

In this paper we will perform a fine analysis of the dispersion process within the critical window, that is, when $M = n/2 + O(n^{1/2})$. 
Our first main result establishes that the dispersion time, scaled by $n^{-1/2}$, converges in distribution to some continuous and almost surely positive random variable.
For a sequence of real-valued random variables $(Z_n)_{n\in \mathbb{N}}$ and a random variable~$Z$ we write $Z_n \overset{d}{\longrightarrow} Z$ to denote that the sequence $(Z_n)_{n\in \mathbb{N}}$ converges to $Z$ in distribution. 
\begin{thm}
\label{thm:main}
Let $\alpha \in \mathbb{R}$ and $M = M(n) = n/2 + \alpha n^{1/2} + o(n^{1/2}) \in \mathbb{N}$. Then there is a continuous and almost surely positive random variable $T_\alpha$ such that, as $n \to \infty$,
\[
    n^{-1/2} T_{n,M} \overset{d}{\longrightarrow} T_\alpha ~.
\]
\end{thm}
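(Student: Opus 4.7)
The plan is to identify $u_t$, the number of unhappy particles at time $t$, as the key state variable (so that $T_{n,M}=\inf\{t:u_t=0\}$), establish a diffusion limit for its proper rescaling $U_n(s):=u_{\lfloor s\sqrt n\rfloor}/\sqrt n$ on the time scale $\sqrt n$, and deduce the convergence of the dispersion time as the absorption time of the limit. At each step all $u_t$ particles at non-singleton vertices jump to uniformly random vertices; letting $h_t:=M-u_t$ and $(K_v)_{v\in[n]}$ the multinomial allocation of the jumping particles, a happy vertex stays happy iff $K_v=0$ while a non-happy vertex becomes happy iff $K_v=1$. Expanding $(1-1/n)^{u_t}$ to second order (valid since $u_t/n=O(n^{-1/2})$ in the critical window) and combining the single-vertex Bernoulli variances with the pair-covariance correction forced by the multinomial constraint yields, uniformly for $u=O(\sqrt n)$,
\[
\E[u_{t+1}-u_t\mid u_t=u]=\frac{2\alpha u}{\sqrt n}-\frac{7}{4}\cdot\frac{u^2}{n}+o(1),\qquad \V[u_{t+1}-u_t\mid u_t=u]=u+o(u).
\]
Hence the candidate scaling limit is
\[
dU_s=\bigl(2\alpha U_s-\tfrac{7}{4}U_s^2\bigr)\,ds+\sqrt{U_s}\,dB_s,\qquad U_{0^+}=+\infty,
\]
a Feller diffusion with quadratic competition in the sense of Lambert (2005); from his analysis this SDE admits a unique strong solution starting from $+\infty$, is absorbed at $0$ in almost surely finite time, and the absorption time $T_\alpha$ has a continuous, strictly positive density on $(0,\infty)$.

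The argument then has three parts. \emph{Step 1 (coming down from infinity).} Because $U_n(0)=M/\sqrt n\to\infty$, the initial phase $u_t=\Theta(n)$ must be controlled separately. The same drift computation in this regime gives $\E[\Delta\phi_t]\approx-\tfrac74\phi_t^2$ for $\phi_t:=u_t/n$, with ODE solution $\phi(t)\sim 4/(7t)$, and Freedman's inequality applied to the Doob decomposition of $u_t$ shows that $u_t$ hugs this trajectory within $O(\sqrt n\log n)$ down to the diffusive window; in particular, for every $\varepsilon>0$ there exist $\delta_0,K$ such that $\Prob(U_n(\delta)\le K)\ge 1-\varepsilon$ for all $\delta\in(0,\delta_0)$ and $n$ large, with $K=K(\delta)\sim 4/(7\delta)$. \emph{Step 2 (diffusion approximation on $[\delta,T]$).} I would follow the standard martingale problem route (e.g.\ Ethier--Kurtz Ch.~4): the discrete generator $\sqrt n\cdot\E[f(U_n(s+1/\sqrt n))-f(U_n(s))\mid\mathcal F]$ converges uniformly on compacts of $(0,\infty)$ to $\mathcal A f(U)=(2\alpha U-\tfrac{7}{4}U^2)f'(U)+\tfrac12 U f''(U)$ for $f\in C_c^2$, using the drift/variance bounds above together with standard higher-moment control of the multinomial; tightness on $[\delta,T]$ follows from Aldous' criterion via the same bounds; and pathwise (thus weak) uniqueness of the limit SDE is Yamada--Watanabe, since $\sqrt U$ is $\tfrac12$-H\"older and the drift is locally Lipschitz away from the absorbing boundary at $0$. \emph{Step 3 (hitting times).} Steps~1 and~2 combined via the strong Markov property at $s=\delta$ yield joint convergence of $(U_n(s))_{s\ge\delta}$ to $(U_s)_{s\ge\delta}$ in Skorokhod; as the limit is continuous and is absorbed at $0$ without oscillation (by Lambert's boundary analysis), the first-hitting-time functional is a.s.\ continuous at the limit path, so the continuous mapping theorem gives $\inf\{s\ge\delta:U_n(s)=0\}\overset{d}{\longrightarrow}\inf\{s\ge\delta:U_s=0\}$. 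Step~1 ensures $T_{n,M}>\delta\sqrt n$ with probability tending to $1$, so these quantities coincide with $T_{n,M}/\sqrt n$ respectively $T_\alpha$ in the limit $\delta\to 0$, finishing the proof.

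The main obstacle is the interchange of limits in the final step: one needs a quantitative coming-down estimate that is uniform as $\delta\to 0$ and compatible with the singular behaviour $U_s\sim 4/(7s)$ of the limit at $s=0^+$. A closely related subtlety is the transfer of Skorokhod convergence of the paths to convergence of hitting times, which forces one to rule out pathological oscillation or touching-without-crossing of the limit at absorption; both difficulties are resolved via Lambert's explicit analysis of the absorbing boundary $U=0$ of the logistic Feller diffusion, together with the subcritical tail bounds already available from \cite{DMP23} to rule out exotic behaviour of the discrete process near extinction.
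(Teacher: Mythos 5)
Your proposal follows essentially the same strategy as the paper: derive the drift and variance of $U_t$, identify the logistic Feller diffusion $dX_s=(2\alpha X_s-\tfrac74 X_s^2)ds+\sqrt{X_s}\,dB_s$ as the scaling limit, control the initial "coming down from infinity" phase separately, apply a diffusion-approximation theorem on a window where the rescaled process is $\Theta(1)$, pass hitting times through the limit, and send the truncation parameter $\delta\to 0$ using Lambert's analysis of the entrance boundary at $+\infty$. The only differences are cosmetic choices of tools (Ethier--Kurtz martingale problem plus Yamada--Watanabe rather than Durrett's theorem; Freedman rather than Azuma--Hoeffding; a time-based truncation $[0,\delta]$ and a bound $K(\delta)\sim 4/(7\delta)$ rather than the paper's space-based stopping time $T_{n,M,\delta}=\inf\{t:U_t\le n^{1/2}/\delta\}$), and these are readily interchangeable given the drift computation.

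One caveat worth flagging: your Step~3 invokes continuity of the first-hitting-time functional at the limit path. That functional is not continuous in the Skorokhod topology at an absorbed nonnegative path $\xi$ (paths $\xi_n=\xi+1/n$ converge to $\xi$ but never hit zero), so "absorbed without oscillation" alone is not enough to apply the continuous mapping theorem. To close this you additionally need tightness of the discrete hitting times — supplied by the exponential tail bound from \cite{DMP23} (Theorem~\ref{cor:tailbounds}) — together with the lattice structure of $n^{-1/2}U'$, or alternatively argue as the paper does via the elementary identity $\{T'_{n,M,\delta}\le s\}=\{U'_s=0\}$ (and note that even there, passing $\Prob(n^{-1/2}U'_{sn^{1/2}}=0)\to\mathbb{P}_{2\alpha,7/4,1,1/\delta}(X_s=0)$ requires an extra argument since $\{0\}$ is an atom of $X_s$). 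This is a real technicality, but one you partially anticipate, and it does not change the structure of the proof.
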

\noindent
Within the proof of Theorem~\ref{thm:main} we derive an explicit description of the distribution of $T_\alpha$, see also~\eqref{eq:distributionTalpha}. In order to specify it at this point we need to step back a bit and introduce some notation and present some facts about the process. Let us write $U_t$ for the (random) number of unhappy particles at the end of step~$t$, so that~$U_0 = M$, and let us fix some~$\delta > 0$.
As we will see in Section~\ref{sec:earlySteps},~$U_t$ drops rather quickly to~$\Theta(n^{1/2})$ particles. In particular, with probability at least $1-\delta$, after $t^* \sim \frac47\delta n^{1/2}$ steps we have that~$U_{t^*} \sim n^{1/2}/\delta$; here and everywhere else `$\sim$' will stand for~`$= (1+o(1))$' and asymptotic statements are, unless stated explicitly otherwise, with respect to~$n \to \infty$ and uniform in all other parameters.
After $t^*$ steps the process $(U_t)_{t \ge t^*}$ of unhappy particles starts fluctuating significantly, see Figure~\ref{fig:threeRunsAtCrit} for outcomes of a simulation study when $M=n/2$.
\begin{figure}[ht]
\centering
\begin{tikzpicture}[>=latex]
\begin{axis}[
        xmin = -220,
        xmax = 8300,
	xlabel=$t$,
        xlabel style={at={(axis cs:8321,-0.52)}, anchor=north},
        xtick={0,500, 1000,3000,5000,7000},
        xticklabels={$\phantom{'}0\phantom{'}$,$t'$,\phantom{'}1000\phantom{'},\phantom{'}3000\phantom{'},\phantom{'}5000\phantom{'},\phantom{'}7000\phantom{'}},
        axis x line*=bottom,
        ymin = -0.5,
        ymax = 10.5,
	ylabel=$\frac{U_t}{1000}$,
        ytick = {0,1,2,4,6,8},
        ylabel style={at={(axis cs:-480,9.8)}, anchor=west},
        axis y line*=left,
	grid=both,
	minor grid style={gray!25},
	major grid style={gray!40},
	width=0.95\linewidth,
	height=230pt,
        axis line style={-stealth, thick},
]
\addplot[line width=1.2pt,dotted]
	table[x=x,y expr={\thisrow{a}/1000},col sep=space]{Data/iterated_mean.csv};
\addlegendentry{Iterated Mean};
\addplot[line width=0.7pt,solid,color=blue]
	table[x=x,y expr={\thisrow{a}/1000},col sep=space]{Data/run1.csv};
\addplot[line width=0.7pt,solid,color=green]
	table[x=x,y expr={\thisrow{a}/1000},col sep=space]{Data/run2.csv};
\addplot[line width=0.7pt,solid,color=orange]
	table[x=x,y expr={\thisrow{a}/1000},col sep=space]{Data/run6.csv};
\node[draw,inner sep =3pt,fill=white] at (675,88)%
        { \begin{minipage}{4.5cm}
            \vspace{-12pt}
            \[
            \begin{split}
                \mathbb{E}[T_{n,M}]  & \sim \sqrt{\pi^{3}n/7} \approx 6655 
            \end{split}
            \]
            \end{minipage}
        };
\end{axis}
\end{tikzpicture}
\captionsetup{width=.9\linewidth}
\caption{Three sample runs of the dispersion process with $n=10^7$ and $M = n/2$, where we depict the number of unhappy particles $U_t$, divided by 1000, at each step $t$.
The trajectory is revealed only after $t'=500$, where $U_{t'} \approx 10^4 \approx 3n^{1/2}$ in all cases.
The dotted line represents the iterated mean of $U_t$, see also Lemma~\ref{lem:veryEarly}. For the asymptotics of $\mathbb{E}[T_{n,M}]$ see~\eqref{eq:ET0}.
}
\label{fig:threeRunsAtCrit}
\end{figure}
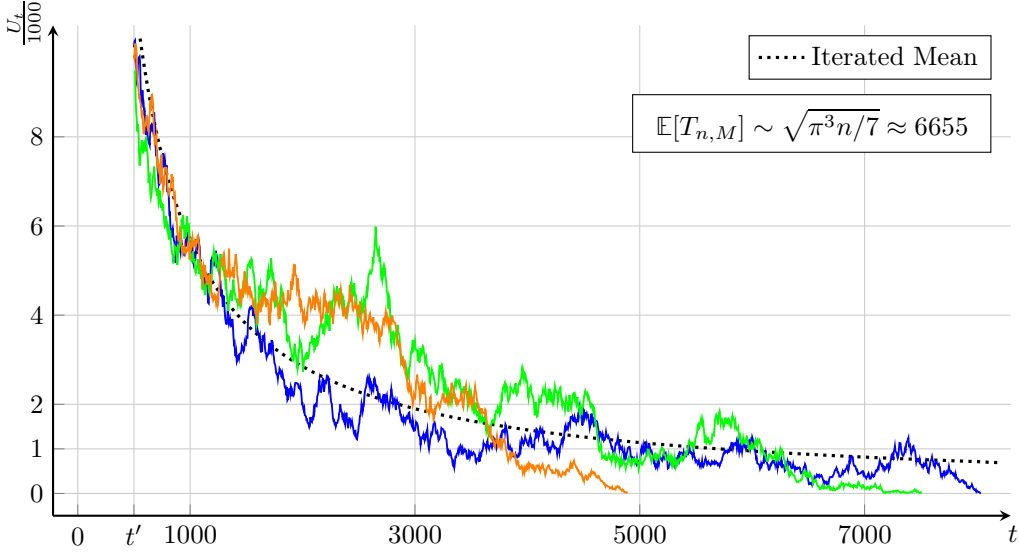
In order to get a grip on it, we scale time and space by a factor of~$n^{1/2}$ and establish that $(n^{-1/2}U_{t^* + \lfloor sn^{1/2} \rfloor})_{s \ge 0}$ converges weakly to a diffusion process. Here weak convergence denotes, as usual, convergence in $D([0,T],\mathbb{R})$ for all $T<\infty$, where $D([0,T],\mathbb{R})$ is the space of all c\`adl\`ag functions from $[0,T]$ to $\mathbb{R}^{d}$ that we equip with the Skorokhod metric.
\begin{lem}
\label{thm:processconvergence}
Let $\alpha \in \mathbb{R}$ and $M = M(n) = n/2 + \alpha n^{1/2} + o(n^{1/2}) \in \mathbb{N}$. Let $\delta > 0$ and
\[
    T_{n,M,\delta}
    \coloneqq \inf \{ t > 0 : U_t \le n^{1/2}/\delta\}
\]
be the first step at which there are at most $n^{1/2}/\delta$ unhappy particles. Then, as $n\to\infty$, weakly 
\[
    \Big(n^{-1/2}\,U_{T_{n,M,\delta} + \lfloor s n^{1/2} \rfloor}\Big)_{s\ge 0} \to X,
\]
where $X$ is a logistic branching process. In particular, if we denote by $B$ a standard Brownian motion, then $X$ uniquely satisfies the stochastic differential equation
\begin{equation}
\label{eq:SDEFellerDiffusionX}
    dX_{s} = \left(2\alpha X_{s} -\frac{7}{4}X_{s}^{2}\right) ds + \sqrt{X_{s}}dB_{s},~~s>0,
    \quad
    \text{ and }
    \quad
    X_0 = \delta^{-1}.
\end{equation}
\end{lem}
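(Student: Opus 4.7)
The plan is to invoke a classical diffusion approximation theorem (e.g.\ Ethier--Kurtz, Chapter~7, or the Stroock--Varadhan martingale problem) for the rescaled discrete-time chain $X^n_s := n^{-1/2}\, U_{T_{n,M,\delta} + \lfloor s n^{1/2} \rfloor}$. This requires (i) precise asymptotics of the one-step conditional drift and variance of $(U_t)$ in the regime $U_t = \Theta(n^{1/2})$, (ii) a Lindeberg-type control on jumps together with tightness of $(X^n)_n$, and (iii) matching of the initial condition at the random time $T_{n,M,\delta}$.

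First I would compute the one-step moments. Conditional on $U_t = u$, the $u$ unhappy particles land independently on uniformly random vertices; writing $Y_v$ for the number of landings at vertex $v$ and $V_h$ for the set of $h = M - u$ happy vertices, a vertex contributes a happy particle at time $t+1$ iff $Y_v = 0$ for $v \in V_h$ or $Y_v = 1$ for $v \notin V_h$. Taylor-expanding $(1-1/n)^u$ and $(1-1/n)^{u-1}$ to order $n^{-2}$ and substituting $h = n/2 + \alpha n^{1/2} - u + o(n^{1/2})$ yields, uniformly for $u \le Cn^{1/2}$,
\[
    \mathbb{E}[U_{t+1} - U_t \mid U_t = u] = 2\alpha\, n^{-1/2} u - \tfrac{7}{4}\, n^{-1} u^2 + o(n^{-1/2}).
\]
The constant $7/4 = 2 - 1/4$ arises by combining the leading contribution $2u^2/n$ (from $u(n-2h)/n$) with a correction $-u^2/(4n)$ coming from the second-order Taylor term $\binom{u}{2}/n^2$ at happy vertices. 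A parallel but more involved calculation, tracking the negative correlations between the indicators implicit in the underlying multinomial law, gives
\[
    \mathrm{Var}(U_{t+1} - U_t \mid U_t = u) = u\,\bigl(1 + o(1)\bigr);
\]
here the covariance contributions from pairs inside $V_h$ and inside $V_h^c$ cancel, at leading order, against those from mixed pairs.

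With these expansions in hand, a Taylor expansion of any test function $f \in C^2_c$ shows that
\[
    n^{1/2}\, \mathbb{E}\bigl[f(X^n_{s + n^{-1/2}}) - f(x) \mid X^n_s = x\bigr] \longrightarrow \bigl(2\alpha x - \tfrac{7}{4} x^2\bigr) f'(x) + \tfrac{1}{2} x f''(x),
\]
which is exactly the generator of \eqref{eq:SDEFellerDiffusionX}. A fourth-moment bound $\mathbb{E}[(\Delta U)^4 \mid U_t = u] = O(u^2)$ (readily obtained from the variance asymptotic above together with multinomial tail bounds) yields $\mathbb{E}[|X^n_{s+n^{-1/2}} - X^n_s|^4] = O(n^{-1})$, which is more than enough for both Lindeberg's condition and tightness via Aldous's criterion. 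The initial condition is handled as follows: by construction $U_{T_{n,M,\delta}} \le n^{1/2}/\delta$, and the same jump estimate applied at step $T_{n,M,\delta} - 1$ shows that the overshoot is $o(n^{1/2})$ with high probability, so $X^n_0 \to \delta^{-1}$ in probability. Pathwise uniqueness for the logistic branching SDE \eqref{eq:SDEFellerDiffusionX} is classical (Lambert, 2005); combining it with the generator convergence identifies the limit as the unique weak solution $X$.

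The hard part will be the second-order drift expansion producing the constant $7/4$: a naive leading-order calculation only recovers $2u^2/n$, and retaining the $-u^2/(4n)$ correction is essential in order to identify the correct logistic branching diffusion. Managing the multinomial covariance structure in the variance computation requires comparable care; once these two moment asymptotics are established, the remaining convergence and tightness arguments follow standard diffusion-approximation machinery.
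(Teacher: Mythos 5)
Your plan follows essentially the same route as the paper: rescale time and space by $n^{1/2}$, compute the one-step drift and quadratic variation of $U_t$ in the regime $U_t=\Theta(n^{1/2})$, check a higher-moment/Lindeberg condition and tightness, match the initial condition at the random time $T_{n,M,\delta}$, and invoke a standard Markov-chain-to-diffusion convergence theorem together with (weak) uniqueness of the logistic Feller SDE from Lambert. The paper implements exactly this (via the drift/variance Lemmas~\ref{lem:exponestepchange} and~\ref{asymptsecoment}, a third-moment condition checked with the conditional-subgaussian concentration of Lemma~\ref{lem:subgaussian}, the quantitative estimate $U_{T_{n,M,\delta}}\sim n^{1/2}/\delta$ from Lemma~\ref{lem:earlyStepsSummary}, and the diffusion-approximation Theorem~\ref{thm:diffusionapproximation}), so the differences — your fourth-moment bound instead of the paper's third-moment $\gamma_3^{(n)}$, citing Ethier--Kurtz/Stroock--Varadhan rather than Durrett, and your slightly imprecise heuristic for where the $7/4$ comes from (the Taylor corrections at both happy and unhappy vertices contribute, not only the $\binom{u}{2}/n^2$ term at happy vertices) — are cosmetic rather than structural.
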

\noindent
For more background on SDEs in general and the specific equation encountered here we refer to Section~\ref{sec:preliminaries}. Let us mention only that stochastic processes satisfying~\eqref{eq:SDEFellerDiffusionX} are well-studied and are also called in the literature \emph{logistic Feller diffusions} or \emph{Feller diffusions with logistic growth}. Generally, such processes satisfy an SDE of the form
\begin{equation}
\label{eq:generalSDEFellerDiffusionX}
	dX_{s} = (a X_{s} - c X_{s}^2) ds + \sqrt{\gamma X_{s}}dB_{s},\;s>0,\text{ with initial condition } X_0 = x \ge 0,
\end{equation}
where $a\in \mathbb{R}$ and $c,\gamma >0$. They appear in the context of population dynamics and stochastically extend the deterministic logistic growth model that describes the evolution of a population under the influences of natural birth, mortality and inter-individual competition. A prime source on the topic is Lambert \cite{lambert2005branching}, who provides a thorough and detailed discussion of the properties of solutions to \eqref{eq:generalSDEFellerDiffusionX}. 

With Lemma~\ref{thm:processconvergence} at hand we readily establish in Section~\ref{sec:mainProof}, see Lemma~\ref{lem:convergenceTprime} there, that the first time step at which the unhappy particles vanish, divided by~$n^{1/2}$, converges in distribution to the \emph{absorption time} of $X$, that is, the first time when $X$ hits zero. Letting $\delta \to 0$ then yields the claimed statement. In particular,~$T_\alpha$ in Theorem~\ref{thm:main} is the absorption time of the limiting solution of~\eqref{eq:SDEFellerDiffusionX} when the initial condition $X_0 \to \infty$; as we will see, this limiting process, called
\emph{standard logistic branching process}, is well-defined and well-studied, see~\cite{lambert2005branching,foucart2020entrance} and Section~\ref{ssec:LBprocess}.

The explicit descriptions of $X$ and $T_\alpha$ pave the way to obtain further bits of information. To achieve this we will exploit the following bounds, stating that $n^{-1/2}T_{n,M}$ has exponential tails, that are an immediate consequence of the main theorems in~\cite{DMP23}.
\begin{thm}
\label{cor:tailbounds}
Let $\alpha \in \mathbb{R}$ and $M = n/2 + \alpha n^{1/2} + o(n^{1/2}) \in \mathbb{N}$. Then there is a constant $c_\alpha >0 $ such that for all sufficiently large $n$ 
\begin{equation*}
    \mathbb{P}\big(T_{n,M}\le  n^{1/2}/Ac_\alpha\big)\le e^{-A}\quad \mbox{and}\quad \mathbb{P}\big(T_{n,M}>A c_\alpha n^{1/2}\big)\le e^{-A},
    \qquad
    A \ge 1.
\end{equation*} 
\end{thm}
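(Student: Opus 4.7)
The plan is to transfer the tail estimates already established inside \cite{DMP23} to the refined parametrization of the present paper. That work analyses $T_{n,M}$ throughout the regime $\varepsilon = o(1)$ and, within the critical window $|\varepsilon| = O(n^{-1/2})$, the proofs there deliver exponential concentration of $T_{n,M}/n^{1/2}$ around a constant of order one; the only task here is to extract a tail statement of the required form.

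First I would observe that the hypothesis $M = n/2 + \alpha n^{1/2} + o(n^{1/2})$ corresponds to $\varepsilon = (2\alpha + o(1)) n^{-1/2}$, so $|\varepsilon n^{1/2}|$ is bounded by a constant depending only on $\alpha$ for all sufficiently large $n$. If the bounds of \cite{DMP23} are stated uniformly in $|\varepsilon n^{1/2}|$ over compact sets, a single constant $c_\alpha > 0$ accommodates the entire sequence and both asserted inequalities follow immediately.

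If instead the estimates in \cite{DMP23} are phrased only for specific parametric choices of $M$ rather than uniformly in a sub-window, a monotonicity argument bridges the gap: a natural coupling shows that $T_{n,M}$ is stochastically non-decreasing in $M$, since adding a particle to the initial configuration can only impede the dispersion of the others. Setting $M_\pm := \lfloor n/2 + (\alpha \pm 1) n^{1/2}\rfloor$, for all large $n$ we then have the stochastic sandwich
\[
    T_{n,M_-} \preceq_{\mathrm{st}} T_{n,M} \preceq_{\mathrm{st}} T_{n,M_+},
\]
and each $M_\pm$ falls in the critical window with a fixed scaled parameter $\alpha \pm 1$. Applying the tail bounds of \cite{DMP23} to the two endpoints and taking $c_\alpha$ as the maximum of the two constants produced concludes the proof.

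The only genuine point to check is that the concentration provided by \cite{DMP23} inside the critical window is truly exponential in the deviation parameter $A$, rather than, say, merely polynomial. This is consistent with the Doob/martingale and coupling style of their arguments, and so extracting the needed estimate should be a matter of bookkeeping rather than introducing any new probabilistic ingredient; this is what I expect to be the main (modest) obstacle.
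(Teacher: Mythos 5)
Your primary route is the paper's: the paper offers no proof of this statement beyond noting that it is an immediate consequence of the main theorems of \cite{DMP23}, which are phrased uniformly over the critical window $|\varepsilon| = O(n^{-1/2})$, and you correctly observe that $M = n/2 + \alpha n^{1/2} + o(n^{1/2})$ corresponds to $\varepsilon = (2\alpha + o(1))n^{-1/2}$ in that window. The caveat you flag about extracting a genuinely exponential (rather than polynomial) deviation bound in $A$ is indeed the only bookkeeping point.

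Your fallback via stochastic monotonicity, however, contains a gap. The claim that $T_{n,M}$ is stochastically non-decreasing in $M$ ``by a natural coupling'' is asserted, not proved, and it is not obvious for a process with simultaneous jumps. Under the natural coupling that reuses the jump destinations $G_{i,t}$ for particles $i\le M$ in both the $M$- and $(M+1)$-particle systems, the inclusion $\mathcal{U}_t^{(M)} \subseteq \mathcal{U}_t^{(M+1)}$ is not preserved: a particle $j$ that is happy in the $M$-system but unhappy in the $(M+1)$-system at step $t$ jumps away in the latter, and an incoming particle $i$ that lands on $j$'s vertex and becomes unhappy in the $M$-system may therefore land alone and remain happy in the $(M+1)$-system, so $i \in \mathcal{U}_{t+1}^{(M)} \setminus \mathcal{U}_{t+1}^{(M+1)}$ and pathwise dominance fails. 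Establishing stochastic dominance of $T_{n,M}$ in $M$, if true, would require a separate argument; fortunately the fallback is unnecessary because the estimates of \cite{DMP23} are already uniform across the critical window.
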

\noindent
Together with Theorem~\ref{thm:main} we readily obtain convergence in ${\cal L}^p$, for any $p \in \mathbb{R}$, that is,
\begin{equation}
\label{eq:Lpconvergence}    
    n^{-p/2} \mathbb{E}\big[T_{n,M}^p\big]
    \sim \mathbb{E}\big[T_\alpha^p\big], \quad p \in \mathbb{R}.
\end{equation}
As we shall show in Lemma~\ref{seriesrepr}, we obtain for $M = n/2 + \alpha n^{1/2} + o(n^{1/2})$ the series representation
\begin{equation*}
    \mathbb{E}[T_\alpha]
    = \lim_{n\to\infty} \mathbb{E}\big[n^{-1/2}T_{n,M}\big]= \frac{\pi^{3/2}}{\sqrt{7}}+   \frac1{\sqrt{7}}\sum_{m \ge 1} \frac{\Gamma(\frac{m+1}{2})}{m!}\left(\frac{8\alpha}{\sqrt{7}}\right)^m t_m,
    \quad \alpha\in\mathbb{R},
\end{equation*}
where $\Gamma(\cdot)$ is the Gamma function and 
\[
    t_m
    \coloneqq \sum_{k \ge 0} \frac2{(\frac{m+1}{2} + 2k)(\frac{m+3}{2} + 2k)}
    = H_{(m-1)/4} - H_{(m-3)/4},
    \quad m \in \mathbb{N}_0,
\]
and $H_x = \sum_{k \ge 1} \big(\frac1k - \frac1{k+x}\big)$ denotes the `$x$-th harmonic number'. Let us highlight the specific case $\alpha = 0$: when we are essentially~\emph{at} the critical point, then we obtain the beautiful formula
\begin{equation}
\label{eq:ET0}    
    \mathbb{E}[T_0] = \lim_{n\to\infty} \mathbb{E}\big[n^{-1/2}T_{n,n/2 + o(n^{1/2})}\big] = \frac{\pi^{3/2}}{\sqrt{7}},
\end{equation}
which is in the interval $2.104 \pm 0.001$, see also Figure \ref{fig:threeRunsAtCrit}. Our methods also allow us to study the behavior of the transition in and out of the critical window, that is, $\mathbb{E}[T_\alpha]$ when $\alpha \to -\infty$ or $\alpha \to \infty$. In Section~\ref{sec:absorptiontime} we show that 
\[
    \mathbb{E}[T_\alpha] \stackrel{\alpha \to -\infty}{\sim} \frac{\ln|\alpha|}{|\alpha|}
    \qquad
    \text{and}
    \qquad
    \mathbb{E}[T_\alpha] \stackrel{\alpha \to \infty}{\sim} \frac{\sqrt{7\pi}}8\frac{e^{16 \alpha^2 / 7}}{\alpha^2}.    
\]
So, when $\alpha$ gets big, then $\mathbb{E}[T_\alpha]$ behaves (up to polynomial corrections) quadratic exponential in~$\alpha$; already for $\alpha = 3$ we obtain the enormous value $\mathbb{E}[T_3] \approx 5.894 \cdot 10^7$. On the other hand, for negative~$\alpha$ we get a moderate polynomial behavior with logarithmic corrections. Note that the large $|\alpha|$ asymptotics presented here are in accordance with the transition in and out of the critical window, see also Figure~\ref{fig:smoothtransition} and the discussion at the beginning of the introduction. To see this, note that if we set $M = n/2 + \alpha n^{1/2} + o(n^{1/2}) = (1+ \varepsilon)n/2$, then $\varepsilon \sim 2\alpha n^{-1/2}$, and so
\[
    \frac{\ln(\varepsilon^2 n)}{|\varepsilon|} \sim 2\frac{\ln|\alpha|}{|\alpha|} n^{1/2}
    \quad\text{and}\quad
    \varepsilon^{-1} e^{\Theta(\varepsilon^2n)}
    = \frac{n^{1/2}}{2\alpha}e^{\Theta(\alpha^2)}
    = \frac{e^{\Theta(\alpha^2)}}{\alpha^2}n^{1/2}.
\]
Our second main result addresses the total number of jumps $\sum_{t \ge 0} U_t$ performed by the particles until the dispersion time is reached.
\begin{thm}
\label{numbjumps}
Let $\alpha \in \mathbb{R}$ and $M = M(n) = n/2 + \alpha  n^{1/2} + o(n^{1/2}) \in \mathbb{N}$. Then there is a continuous random variable $A_\alpha$ such that, as $n\to \infty$,
\[
    n^{-1}\bigg(\sum_{t \ge 0} U_t - \frac27n\ln n\bigg)
    \overset{d}{\longrightarrow}
    A_\alpha.
\]
In particular, when $\alpha=0$ there exists a $\chi\in\mathbb{R}$ such that
\[
    \mathbb{P}(A_0 \ge a) = \mathrm{erf}\left(\frac{\sqrt7}2e^{-7(a-\chi)/4}\right), \quad a \in \mathbb{R},
\]
where $\mathrm{erf}(x) = \frac{2}{\sqrt{\pi}}\int_0^xe^{-y^2}dy, x \in \mathbb{R}$, denotes the error function.
\end{thm}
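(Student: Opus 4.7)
The plan is to split the total jumps at the critical-window threshold from Lemma~\ref{thm:processconvergence} and to combine a deterministic integration of the early phase with the weak limit of the diffusive critical phase. Fix $\delta>0$, set $\tau_\delta:=T_{n,M,\delta}$, and decompose
\[
    \sum_{t\ge 0}U_t = S_1^{(n,\delta)} + S_2^{(n,\delta)},\qquad S_1^{(n,\delta)}:=\sum_{t=0}^{\tau_\delta-1}U_t,\qquad S_2^{(n,\delta)}:=\sum_{t=\tau_\delta}^{T_{n,M}-1}U_t.
\]

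For the early phase, the drift of $U_t$ has the form $n\phi(U_t/n)+O(n^{1/2})$ with $\phi(x)=-\tfrac{7}{4}x^{2}+O(x^{3})$ near $0$ (this is precisely the scaling behind \eqref{eq:SDEFellerDiffusionX}). Combining the iterated-mean concentration developed in Section~\ref{sec:earlySteps} (Lemma~\ref{lem:veryEarly}) with a Doob--Azuma bound for the partial sums, I would show
\[
    S_1^{(n,\delta)} = \tfrac{2}{7}n\ln n + g_\alpha(\delta)\,n + o_{\mathbb{P}}(n),\qquad g_\alpha(\delta)=\tfrac{4}{7}\ln\delta + c(\alpha)+o_\delta(1),
\]
where the leading $\tfrac{2}{7}n\ln n$ is produced by the logarithmic divergence of $\int x/(-\phi(x))\,dx$ near $0$ and $c(\alpha)$ collects the effect of the initial shift $\alpha n^{1/2}$ together with the macroscopic portion of the trajectory. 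For the critical phase, Lemma~\ref{thm:processconvergence} together with the continuous mapping theorem applied to the functional ``integrate the path up to first hitting of $0$'' yields
\[
    n^{-1}S_2^{(n,\delta)}\;\overset{d}{\longrightarrow}\;A_\alpha^{(\delta)}:=\int_0^{T_\alpha^\delta}X_s^\delta\,ds,
\]
with $X^\delta$ the logistic branching process of \eqref{eq:SDEFellerDiffusionX} started at $\delta^{-1}$ and absorption time $T_\alpha^\delta$; the tail bound in Theorem~\ref{cor:tailbounds} supplies the uniform integrability required for the joint convergence, giving
\[
    n^{-1}\Big(\textstyle\sum_{t\ge 0}U_t-\tfrac{2}{7}n\ln n\Big)\;\overset{d}{\longrightarrow}\;g_\alpha(\delta)+A_\alpha^{(\delta)}.
\]

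The technical heart is letting $\delta\to 0$. Near $0$ the process $X^\delta$ tracks the deterministic decay $4/(7s)$, so $A_\alpha^{(\delta)}$ grows like $\tfrac{4}{7}\ln(1/\delta)$, exactly cancelling the $\tfrac{4}{7}\ln\delta$ inside $g_\alpha(\delta)$. Using the entrance law for the standard logistic branching process (\cite{lambert2005branching,foucart2020entrance}, see Section~\ref{ssec:LBprocess}), the random variables $g_\alpha(\delta)+A_\alpha^{(\delta)}$ converge almost surely as $\delta\to 0$ to a continuous limit $A_\alpha$, which is the object of the theorem.

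For the explicit distribution when $\alpha=0$ I would exploit the Lamperti time change $\tau(s):=\int_0^s X_u^\delta\,du$ applied to \eqref{eq:SDEFellerDiffusionX}. A direct calculation (with Dambis--Dubins--Schwarz absorbing the martingale part) transforms the SDE into the Ornstein--Uhlenbeck equation
\[
    d\widetilde X_\tau = -\tfrac{7}{4}\widetilde X_\tau\,d\tau + d\widetilde B_\tau,\qquad \widetilde X_0 = \delta^{-1},
\]
under which $A_0^{(\delta)}$ is exactly the first hitting time of $0$ by $\widetilde X$. Writing $\widetilde X_\tau = e^{-7\tau/4}\bigl(\delta^{-1}+W(v)\bigr)$ with $v=\tfrac{2}{7}(e^{7\tau/2}-1)$ and $W$ a standard Brownian motion, the hitting event becomes $\{W(v)=-\delta^{-1}\}$, whose first solution is distributed as $\delta^{-2}/Z^{2}$ with $Z\sim N(0,1)$ by the reflection principle. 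Inverting gives $A_0^{(\delta)}=\tfrac{2}{7}\ln\bigl(1+\tfrac{7}{2}\delta^{-2}/Z^{2}\bigr)$; subtracting the matching divergence from $g_0(\delta)$ and sending $\delta\to 0$ produces $A_0=\chi-\tfrac{4}{7}\ln|Z|$ for an explicit $\chi\in\mathbb{R}$. The identity $\mathbb{P}(|Z|\le z)=\mathrm{erf}(z/\sqrt{2})$ then rewrites $\mathbb{P}(A_0\ge a)$ in the claimed form after absorbing the resulting factor $\sqrt{7/2}$ into $\chi$. The main obstacle throughout is the uniform control needed to commute the limits $n\to\infty$ and $\delta\to 0$, which I expect to handle via the exponential tails of Theorem~\ref{cor:tailbounds} together with standard entrance-law estimates for logistic Feller diffusions.
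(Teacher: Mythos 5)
Your proposal follows essentially the same strategy as the paper: split the total jump count at $T_{n,M,\delta}$, show the early-phase contribution equals $\frac27n\ln n+\frac47n\ln\delta+\chi n+o(n)$ via iterated-mean and martingale concentration (this is Lemma~\ref{lem:earlyStepsSummary}), obtain $n^{-1}\sum_{t\ge T_{n,M,\delta}}U_t\overset{d}{\to}\int_0^\infty X_s\,ds$ from Lemma~\ref{thm:processconvergence} (the paper's Lemma~\ref{lem:convergenceSumToIntegral}), and apply the Lamperti time change to identify $\int_0^\infty X_s\,ds$ with an O-U hitting time whose $\delta\to0$ limit after recentring by $\frac47\ln\delta$ is explicit (Lemmas~\ref{lem:integralequalsTprime},~\ref{lem:densityTprime} and~\ref{lem:convergenceIntegral}). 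Your computation for $\alpha=0$ is correct and reproduces the paper's density after the change of variables $v=\tfrac27(e^{7\tau/2}-1)$.

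Three small imprecisions worth flagging. First, you write the early-phase constant as $c(\alpha)$ ``collecting the effect of the initial shift,'' but in fact the shift $\alpha n^{1/2}$ contributes only $o(n)$ to the accumulated sum over the first $o(n^{1/2})$ steps, so the constant $\chi$ is \emph{independent} of $\alpha$; the entire $\alpha$-dependence of $A_\alpha$ enters through the critical-phase diffusion. Second, your claim of almost-sure convergence of $g_\alpha(\delta)+A_\alpha^{(\delta)}$ as $\delta\to0$ is not justified as stated, because for different $\delta$ these quantities live on different probability spaces; the paper proves convergence in distribution (Lemma~\ref{lem:convergenceIntegral}), which is all one needs, and a coupling argument would be required to upgrade it. Third, the paper does not invoke uniform integrability to pass from $\int_0^S$ to $\int_0^\infty$ in Lemma~\ref{lem:convergenceSumToIntegral}; it uses a finite-horizon truncation together with the fact that $U'_s=0$ for $s\ge T'_{n,M,\delta}$ and the distributional tail of the absorption time $T$. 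These are all repairable and the overall structure, decomposition, and key lemmas coincide with the paper's proof.
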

Regarding the definition and the actual value of $\chi$ we refer to Section~\ref{ssec:veryEarlySteps}, see also Remark~\ref{rem:chi}.
As a consequence of the theorem we deduce that each of the $M \sim n/2$ particles performs on average typically $\sim \frac47\ln n$ jumps before everybody settles, and this is independent of $\alpha$. However, the fluctuations in the total number of jumps are linear in $n$ and the limiting distribution depends on $\alpha$.
Towards the proof of Theorem~\ref{numbjumps} we show
in our analysis of the early, i.e., the first $o(n^{1/2})$, steps, which can be found in Section~\ref{sec:earlySteps}, that there are already $\sim \frac27 n\ln n$ jumps in those steps of the process. With Lemma~\ref{thm:processconvergence} and Theorem~\ref{thm:main} in mind, it is not surprising that the remaining $\Theta(n^{1/2})$ steps only contribute an additional $O(n)$ jumps, as $n^{-1/2}U_t$ is typically bounded for $t = \Theta(n^{1/2})$. We verify this intuition in the proof of Theorem~\ref{numbjumps} in Section~\ref{sec:proofnumjumps}. In particular, Theorem~\ref{thm:main} and Lemma~\ref{thm:processconvergence} assert that $(n^{-1/2}U_{T_{n,M,\delta} + \lfloor sn^{1/2} \rfloor})_{s \ge 0}$ converges weakly to a logistic branching process $X$, and so the total number of jumps behaves like $n^{1/2} A_{\alpha,\delta}$, where $A_{\alpha,\delta} \coloneqq \int_0^\infty X_s ds$, plus the additional $\frac27 n\ln n$ jumps from the first $T_{n,M,\delta}$ steps. Some care needs to be taken and some significant extra work needs to be done to take the $\delta \to 0$ limit; in particular, we need a sufficiently tight control of $U_t$ for $t \le T_{n,M,\delta}$, so that the errors do add up to $o(n)$, and this is accomplished by means of a delicate martingale argument in Section~\ref{sec:earlySteps}. All pieces are then put together in Section~\ref{sec:proofnumjumps}. 

\paragraph{Variations on the Theme.} Our work opens up opportunities for studying a variety of models that are related to the dispersion process or extensions of it. In a general setting, \emph{happiness} can be defined as a property of individual vertices and particles. More specifically, each vertex may have a \emph{capacity}, which, if exceeded, deems all particles on that vertex as unhappy. On the other side, each particle $p$ may have a \emph{stress level}, which dictates an upper bound on the particles that share a vertex  with $p$ so that $p$ is still happy. We leave it as an open problem to study the precise behavior in a general setting, where for example the empirical distributions of the capacities and the stress levels fulfill appropriate convergence properties.

In a different line of research it would be challenging to provide detailed studies of dispersion processes on graphs different than the complete graph. We believe, for example, that our results also hold if the underlying graph is a sufficiently dense Erd\H{o}s-R\'enyi random graph, which is obtained by retaining independently each edge of the complete graph on $n$ vertices with probability~$p$. In particular, if, say, $p = \omega(n^{-1/2})$, guaranteeing that the minimum degree is much larger than $n^{1/2}$, then similar results as in Theorem~\ref{thm:main} should hold, as the process finishes after $O(n^{1/2})$ rounds if the graph is complete. However, it might be the case that even on much sparser graphs the behavior does not change (since, for example, in most steps just an $O(n^{1/2})$  number of particles move). We consider it as an important and eminent challenge to study the effect of the edge probability $p$ on the distribution of the dispersion time. 

\paragraph{Related work.}
The aforementioned paper~\cite{FP18} studies, apart from complete graphs, the dispersion time on several other families, like paths, trees, grids, hypercubes and Cayley
graphs. The dispersion process was also studied by Frieze and Pegden~\cite{FP18}, who considered the \emph{dispersion distance} on the infinite line $L_{\infty}$. They showed that the dispersion distance is $\Theta(n)$ when there are~$n$ particles in the system, improving upon previous results in~\cite{CMRRS18}. A similar setup  was considered by Shang~\cite{S20}, who studied the dispersion distance on $L_{\infty}$ in a non-uniform dispersion process.

Processes where particles move on the vertices of a graph have been widely studied over the past decades; we refer to \cite{CMRRS18} for references. Concerning processes whose scope is to \textit{disperse} particles on discrete structures, arguably the best known such model is Internal Diffusion Limited Aggregation, see \cite{DF91,LBG92,10.1007/978-3-662-43948-7_21}. There, particles sequentially start (one at a time) from a specific vertex designated as the origin.
Each particle moves randomly until it finds an unoccupied vertex; then it occupies it forever and it does not move at subsequent steps.  
Another well-studied class of models are Activated Random Walks that evolve on $\mathbb{Z}^d$, see~\cite{ar:rollaARW} for an extensive review. Roughly speaking, we place particles on~$\mathbb{Z}^d$, and some of them are initially active while others are asleep. The rules of the process are then as follows. Whenever a particle is alone on a vertex, it falls asleep with a certain rate. On the other hand, active particles jump according to independent random choices, and whenever they encounter a particle that is asleep, they wake it up.

~\\
\noindent
In the remainder of the section we provide a formal definition of the model that we study, then we fix some notation, and eventually we conclude with a brief outline of the paper.

\paragraph{Model.} Let $n \in \mathbb{N}$. We denote by $M\ge 2$  the number of particles and we write $\mathcal{U}_t$ and $\mathcal{H}_t$ for the \textit{sets} of unhappy and happy particles at step $t$, respectively.
Moreover, we set $U_t\coloneqq |\mathcal{U}_t|$ and $H_t\coloneqq |\mathcal{H}_t|$. Initially, which is at step $t=0$, all $M$ particles are placed on \textit{one} distinguished vertex, say vertex 1, and are unhappy. Thus, writing $p_i$ for the $i$-th particle, $1\le i \le M$, we set
\[
    \mathcal{U}_0 = {\cal P}\coloneqq \{p_1, \dots, p_M\},~~ U_0 = M
    \quad \text{and} \quad 
    \mathcal{H}_0 = \emptyset,~~ H_0 = 0.
\]
For every $t\in \mathbb{N}_0$, the distribution of $\mathcal{H}_{t+1}$ (and thus also of $\mathcal{U}_{t+1}, U_{t+1},H_{t+1}$), given $\mathcal{U}_{t}$, is defined as follows.
Each particle in $\mathcal{U}_{t}$ moves to one of the $n$ vertices selected independently and uniformly at random and each particle in $\mathcal{H}_t$ remains at its position. In particular, if we denote by $p_{i,t}$ the position of particle $i$ at step $t\in \mathbb{N}_0$, then
\[
    p_{i,0} = 1, ~ 1\le i \le M,
\quad
\text{and, in distribution,}
\quad
    p_{i,t+1} =
    \begin{cases}
        p_{i,t}, & \text{ if } p_i \in {\cal H}_t, \\
        G_{i,t+1}, & \text{ if } p_i \in {\cal U}_t, 
    \end{cases},
    ~ 1 \le i \le M, t \in \mathbb{N}_0,
\]
where $(G_{i,t})_{1 \le i \le n, t\in\mathbb{N}_0}$ are independent and uniform from $\{1, \dots, n\}$. In addition, we set for $t \in \mathbb{N}_0$
\[
    {\cal H}_{t+1} = \big\{ p_i \in {\cal P}: p_{i,t+1} \neq p_{j,t+1} \text{ for all $j \in \{1, \dots, M\} \setminus \{i\}$}\big\},
    \quad
    {\cal U}_{t+1} = {\cal P} \setminus {\cal H}_{t+1}.
\]
With this notation, the \emph{dispersion time} considered in Theorem~\ref{thm:main}
is defined as the smallest~$t$ at which there are no unhappy particles, that is,
\begin{equation}
\label{eq:TnM}
    T_{n, M} \coloneqq \inf\big\{t \in \mathbb{N}_0 : U_t = 0\big\}.
\end{equation}

\paragraph{Notation.} Let $\mathbb{N}$ denote the set of positive integers and set $\mathbb{N}_0=\mathbb{N}\cup \{0\}$. Given $k\in \mathbb{N}$, we write $[k]\coloneqq \{1,\dots,k\}$. 
Given functions $f:\mathbb{N}\mapsto \mathbb{R},g:\mathbb{N}\mapsto \mathbb{R}$, we write $f=o(g)$ if $|f(n)/g(n)|\rightarrow 0$ as $n\rightarrow \infty$ and $f=O(g)$ if there is a constant $C>0$ such that $|f(n)|\leq C|g(n)|$ for all large enough $n$. We write $f=\Theta(g)$ if $f=O(g)$ and $g=O(f)$, whereas the notation $f=\omega(g)$ means that $|f(n)/g(n)|\rightarrow \infty$ as $n\rightarrow \infty$. For convenience,  given functions $f,g,h:\mathbb{R}\to\mathbb{R}$ we will write
\[
    f(u) = g(u) \pm h(u)  \Longleftrightarrow |f(u)-g(u)|\leq h(u), \quad u \in \mathbb{R}.
\]
Given $a,b,c\in\mathbb{R}$, we set $a \vee b \coloneqq \max\{a,b\}$, $a \wedge b \coloneqq \min\{a,b\}$. Moreover, for $b,c\neq0$ we write, whenever it is not ambiguous,  $a/bc$ (instead of, say, $a/(bc)$) for $\frac{a}{bc}$ and $ab/c$ for $\frac{ab}{c}$. 

\paragraph{Outline.} The paper is structured as follows. In the next section we give some basic background on SDEs and present the main tool, diffusion approximation, that we use to study the process of the number of unhappy particles. Moreover, we take a closer look at the logistic branching processes and collect the properties that will be relevant here; our main source is the thorough study~\cite{lambert2005branching}. Moreover, we collect some facts about martingales that will be useful.
In order to apply the diffusion approximation framework it is necessary to study the \emph{drift} (expected change) and the \emph{variation} (square of the expected change) of the underlying Markov chain; this is performed in Section~\ref{sec:driftvar} for the process of unhappy particles.
Subsequently, in Section~\ref{sec:earlySteps} we study the early steps of the dispersion process and provide, by means of martingale concentration arguments, tight bounds for $U_t$ for $t = o(n^{1/2})$.
With this at hand we prove Theorems~\ref{thm:processconvergence} and ~\ref{thm:main} in Section~\ref{sec:mainProof}. In Section~\ref{sec:absorptiontime} we study the expectation of the absorption time and prove the explicit and asymptotic formulae. Finally, in Section~\ref{sec:proofnumjumps} we prove Theorem~\ref{numbjumps}  about the total number of jumps.

\section{Probabilistic Preliminaries}
\label{sec:preliminaries}

In this section we collect several probabilistic facts that we will use: diffusion approximation, properties of the logistic Feller diffusion and martingale inequalities. 

\subsection{Diffusion Approximation}
\label{ssec:preliminariesdiffusion}

A main tool that we will use in the proof of Theorem~\ref{thm:main} is the concept of \emph{diffusion approximation}, which allows us to approximate a sequence $(\mathbf{Y}^{(n)})_{n\in\mathbb{N}}$ of Markov chains with values in $\mathbb{R}^d$, where ${d \in \mathbb{N}}$, by a continuous-time stochastic process. More specifically, we examine convergence properties of~$(\mathbf{Y}^{(n)})_{n\in\mathbb{N}}$ to a process satisfying a stochastic differential equation (SDE)
\begin{equation}
\label{eq:SDE}
    d\mathbf{X}_{s} = b(\mathbf{X}_{s})ds + \sigma(\mathbf{X}_{s})d\mathbf{B}_{s},
    \quad s>0,
\end{equation}
where $b: \mathbb{R}^{d}\to\mathbb{R}^{d}$ and $\sigma: \mathbb{R}^{d} \to \mathbb{R}^{d\times d}$ are suitable functions and $\mathbf{B}$ is a $d$-dimensional standard Brownian motion.
In this section we provide an overview of the necessary results from stochastic calculus.
Additionally, we collect some properties of the limit process that will emerge within the proof of Theorem~\ref{thm:main}.
In what follows we denote discrete time by $t \in \mathbb{N}_{0}$ (so, for example, $\mathbf{Y}^{(n)} = (\mathbf{Y}^{(n)}_t)_{t\in\mathbb{N}_0}$), whereas~$s\ge 0$ represents continuous time. Moreover, for all $i,j\in [d]$, the subscript $i$ denotes the $i$-th component of a $d$-dimensional vector and the subscript $ij$ refers to the entry in row $i$ and column $j$ of a $d\times d$-dimensional matrix. 

Let us consider \eqref{eq:SDE}.
A \emph{(weak) solution to \eqref{eq:SDE} with initial value $\mathbf{X}_0 = \mathbf{x} \in \mathbb{R}^{d}$} is a triple $(\mathbf{X},\mathbf{B},\mathscr{P})$, where $\mathscr{P} = (\Omega, \mathcal{F}, (\mathcal{F}_{s})_{s\geq 0},\mathbb{P})$ is a filtered probability space with the filtration satisfying the usual conditions, i.e.~$(\mathcal{F}_{s})_{s\geq0}$ is right-continuous and complete. Further, $\mathbf{X}=(\mathbf{X}_{s})_{s\geq0}$ and $\mathbf{B}=(\mathbf{B}_{s})_{s\geq0}$ are continuous stochastic processes that are adapted to  $(\mathcal{F}_{s})_{s\geq0}$ such that 
\begin{itemize}
\item $\mathbf{B}$ is a standard $d$-dimensional Brownian motion with respect to $(\mathcal{F}_{s})_{s\geq0}$, i.e.~$\mathbf{B}$ is a standard Brownian motion and $\mathbf{B}_s-\mathbf{B}_r$ is independent of $\mathcal{F}_r$ for any $0\leq r<s$;
\item $\mathbf{X}_s$ satisfies \eqref{eq:SDE} and the initial condition, i.e.
\[
    \mathbf{X}_{s}
    = \mathbf{x}  + \int_{0}^{s}b(\mathbf{X}_{r})dr + \int_{0}^{s}\sigma(\mathbf{X}_{r})d\mathbf{B}_{r},
    \quad s\ge 0,
\]
or, equivalently, if we write $\mathbf{X}_s = (X_{1,s},\dots,X_{d,s})$ for $s \ge 0$, then
\[
    X_{i,s} = x_{i} + \int_{0}^{s}b_{i}(\mathbf{X}_{r})dr + \sum_{j=1}^{d}\int_{0}^{s}\sigma_{ij}(\mathbf{X}_{r})dB_{j,r}, \quad i \in [d], ~ s \ge 0.
\]    
\end{itemize}
Moreover, we say that there is \emph{(weak) uniqueness} if whenever $(\mathbf{X},\mathbf{B},\mathscr{P})$ and $(\mathbf{\tilde{X}},\mathbf{\tilde{B}},\tilde{\mathscr{P}})$ solve \eqref{eq:SDE} weakly and satisfy $\mathbf{X}_{0}=\mathbf{\tilde{X}}_{0}$, then $\mathbf{X}$ and $\mathbf{\tilde{X}}$ have the same law.

In order to get the diffusion approximation to work, we construct a sequence of right-continuous and continuous-time stochastic processes from the given sequence  $(\mathbf{Y}^{(n)})_{n\in\mathbb{N}}$ of discrete time Markov chains by using constant interpolation between the time points. Then, under appropriate conditions specified in the subsequent theorem, $(\mathbf{Y}^{(n)})_{n\in\mathbb{N}}$ converges weakly
to the solution of an SDE.
With the necessary  concepts at hand we are now ready to present our main tool, and we refer for example to~\cite[Ch.~8]{durrett2018stochastic} for an extensive treatment.
\begin{thm}[Diffusion Approximation]
\label{thm:diffusionapproximation}
Let $d \in\mathbb{N}$, $\sigma: \mathbb{R}^{d} \to \mathbb{R}^{d \times d}$ and $b: \mathbb{R}^{d} \to \mathbb{R}^{d}$ be continuous functions and assume that for any $\mathbf{x} \in\mathbb{R}^{d}$ the SDE~\eqref{eq:SDE} possesses a unique solution such that $\mathbf{X}_{0}=\mathbf{x} $. Furthermore, let $h:\mathbb{N} \to \mathbb{R}_{+}$ be a sequence with $\lim_{n\to\infty}h(n) = 0$ and for all~$n \in \mathbb{N}$ let $\mathbf{Y}^{(n)} = (\mathbf{Y}_{t}^{(n)})_{t\in\mathbb{N}_{0}}$ be a discrete-time Markov chain with values in~$S^{(n)}\subseteq\mathbb{R}^{d}$. Define, for all $t\in\mathbb{N}_{0}$, $\mathbf{x} \in S^{(n)}$ and $i,j \in [d]$
\begin{equation*}
    b_{i}^{(n)}(\mathbf{x} ) \coloneqq \frac{\mathbb{E}\big[Y_{i,t+1}^{(n)} - x_{i} \mid \mathbf{Y}_{t}^{(n)} = \mathbf{x} \big]}{h(n)},
    \quad
    a_{ij}^{(n)}(\mathbf{x} ) \coloneqq  \frac{\mathbb{E}\big[(Y_{i,t+1}^{(n)} - x_{i})(Y_{j,t+1}^{(n)} - x_{j})\mid \mathbf{Y}_{t}^{(n)} = \mathbf{x} \big]}{h(n)},
\end{equation*}
and $\gamma^{(n)}_{p}(\mathbf{x} ) \coloneqq  \mathbb{E}\big[|\mathbf{Y}^{(n)}_{t+1} - \mathbf{x} |^{p}\mid \mathbf{Y}_{t}^{(n)} = \mathbf{x} \big] / h(n)$ for $p\geq2$.
Let $a\coloneqq \sigma \sigma^{T}$ and assume that for all $R<\infty$ and $i,j\in [d]$
\begin{equation*}
   \lim_{n\to\infty} \sup_{\mathbf{x} \in S^{(n)},|\mathbf{x} |\leq R} |b_{i}^{(n)}(\mathbf{x} ) - b_{i}(\mathbf{x} )|= 0, \quad 
   \lim_{n\to\infty} \sup_{\mathbf{x} \in S^{(n)},|\mathbf{x} |\leq R} |a_{ij}^{(n)}(\mathbf{x} ) - a_{ij}(\mathbf{x} )|= 0,
\end{equation*}
and 
\begin{equation*}
    \lim_{n\to\infty} \sup_{\mathbf{x} \in S^{(n)},|\mathbf{x} |\leq R} \gamma^{(n)}_{p}(\mathbf{x} ) = 0 ~\text{ for some }~ p \geq 2.
\end{equation*}
Finally, assume that $\mathbf{Y}^{(n)}_{0} \to \mathbf{x} $ as $n \to \infty$.
Then $(\mathbf{Y}^{(n)}_{\lfloor s/h(n) \rfloor})_{s\geq0}$ converges weakly to a strong Markov process~$\mathbf{X}$ that satisfies the SDE~\eqref{eq:SDE} with $\mathbf{X}_{0} =\mathbf{x}$. 
\end{thm}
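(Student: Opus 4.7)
The plan is to follow the classical martingale-problem approach of Stroock--Varadhan. Set $X^{(n)}_{s}:=\mathbf{Y}^{(n)}_{\lfloor s/h(n) \rfloor}$, viewed as a random element of $D([0,T],\R^{d})$ for each $T<\infty$. The proof then consists of three steps: (i) tightness of the laws of $(X^{(n)})_{n\in\N}$; (ii) identification of every weak subsequential limit with a solution of the martingale problem associated to the generator
\[
Lf(\mathbf{x}) = \sum_{i=1}^{d} b_{i}(\mathbf{x})\partial_{i}f(\mathbf{x}) + \tfrac{1}{2}\sum_{i,j=1}^{d} a_{ij}(\mathbf{x})\partial_{ij}f(\mathbf{x}),\quad f\in C^{\infty}_{c}(\R^{d});
\]
(iii) invoking the standard equivalence that weak well-posedness of the SDE~\eqref{eq:SDE} implies well-posedness of the martingale problem for $L$, so that any subsequential limit coincides in law with $\mathbf{X}$, upgrading to convergence of the entire sequence.

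For step (i), I would invoke Aldous's criterion: one must show $\sup_{n}\Prob(\sup_{s\le T}|X^{(n)}_{s}|>R)\to 0$ as $R\to\infty$, and that for any bounded stopping times $\tau_{n}\le T$ and any $\delta_{n}\downarrow 0$ one has $|X^{(n)}_{\tau_{n}+\delta_{n}}-X^{(n)}_{\tau_{n}}|\to 0$ in probability. Summing the drift terms $b^{(n)}_{i}$ over the $\lfloor\delta_{n}/h(n)\rfloor$ intermediate steps and applying Doob's inequality to the martingale part (controlled via $a^{(n)}_{ii}$) yields an $\mathcal{L}^{2}$-bound of order $\sqrt{\delta_{n}}$ for the increment on $\{\sup|X^{(n)}_{\cdot}|\le R\}$. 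The uniform moment bound is a localized Gr\"onwall-type estimate for $\E[|X^{(n)}_{s\wedge\tau^{(n)}_{R}}|^{2}]$, again based on the drift and variance hypotheses applied to the stopped process.

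For step (ii), I would test against $f\in C^{\infty}_{c}(\R^{d})$ and use that
\[
M^{(n),f}_{t} := f(\mathbf{Y}^{(n)}_{t}) - f(\mathbf{Y}^{(n)}_{0}) - \sum_{k=0}^{t-1}\E\big[f(\mathbf{Y}^{(n)}_{k+1}) - f(\mathbf{Y}^{(n)}_{k})\mid \mathbf{Y}^{(n)}_{k}\big]
\]
is a martingale by the Markov property. A second-order Taylor expansion of the one-step increment at $\mathbf{x}$ yields
\[
\frac{\E[f(\mathbf{Y}^{(n)}_{k+1}) - f(\mathbf{Y}^{(n)}_{k})\mid \mathbf{Y}^{(n)}_{k}=\mathbf{x}]}{h(n)} = \sum_{i} b^{(n)}_{i}(\mathbf{x})\partial_{i}f(\mathbf{x}) + \tfrac{1}{2}\sum_{i,j} a^{(n)}_{ij}(\mathbf{x})\partial_{ij}f(\mathbf{x}) + r^{(n)}(\mathbf{x}),
\]
where $r^{(n)}(\mathbf{x})$ is a Taylor remainder controlled on $\{|\mathbf{x}|\le R\}$ by a constant multiple of $\|D^{3}f\|_{\infty}\,\E[|\mathbf{Y}^{(n)}_{k+1}-\mathbf{x}|^{3}\mid\mathbf{Y}^{(n)}_{k}=\mathbf{x}]/h(n)$; the latter tends to zero uniformly on compacts by the smallness assumption on $\gamma^{(n)}_{p}$ (using H\"older's inequality to interpolate between moments if needed). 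Combined with the locally uniform convergences $b^{(n)}\to b$ and $a^{(n)}\to a$, the compensator of $f(X^{(n)}_{s})$ converges in probability, locally uniformly in $s$, to $\int_{0}^{s}Lf(X^{(n)}_{r})\,dr$; standard continuous-mapping arguments then show that every weak limit $\mathbf{X}^{\infty}$ satisfies that $f(\mathbf{X}^{\infty}_{s})-f(\mathbf{X}^{\infty}_{0})-\int_{0}^{s}Lf(\mathbf{X}^{\infty}_{r})\,dr$ is a martingale for every $f\in C^{\infty}_{c}(\R^{d})$, i.e.~solves the martingale problem for $L$.

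The main obstacle, in my view, is the \emph{global} moment control required for step (i): in concrete applications such as~\eqref{eq:SDEFellerDiffusionX} the coefficients $b,\sigma$ grow in $|\mathbf{x}|$, yet the hypotheses only deliver locally uniform approximation. This is handled by the standard localization device --- stop $X^{(n)}$ at $\tau^{(n)}_{R}:=\inf\{s:|X^{(n)}_{s}|\ge R\}$, carry out tightness and martingale-problem identification for the stopped sequence where all quantities are bounded, then let $R\to\infty$ using non-explosion of the limit $\mathbf{X}$, itself a consequence of the assumed uniqueness of~\eqref{eq:SDE}. The smallness-of-jumps hypothesis on $\gamma^{(n)}_{p}$ additionally guarantees that every weak limit has continuous paths, so that the martingale problem is equivalent to the SDE formulation.
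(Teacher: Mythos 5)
The paper does not prove this theorem; it cites it as a black-box result from Durrett's \emph{Stochastic Calculus} (Ch.~8), so there is no in-paper proof to compare against. Your sketch follows the canonical Stroock--Varadhan martingale-problem route, which is exactly the approach taken in that reference: tightness via Aldous's criterion (with localization), identification of subsequential limits as solutions of the martingale problem for the generator $L$, and then uniqueness inherited from weak well-posedness of the SDE. The architecture of your argument is therefore sound.

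One step deserves to be made precise. You control the Taylor remainder of $f(\mathbf{Y}^{(n)}_{k+1})-f(\mathbf{Y}^{(n)}_k)$ by $\|D^3 f\|_\infty\,\E[|\mathbf{Y}^{(n)}_{k+1}-\mathbf{x}|^3\mid\mathbf{Y}^{(n)}_k=\mathbf{x}]/h(n)$ and assert this vanishes by the hypothesis on $\gamma^{(n)}_p$, ``using H\"older to interpolate if needed.'' This is not quite right as stated: the theorem only assumes $\gamma^{(n)}_p\to 0$ for \emph{some} $p\ge 2$, and moment interpolation gives $\E[|Z|^3]\le\E[|Z|^2]^{\theta}\E[|Z|^p]^{1-\theta}$ only when $p\ge 3$; for $2<p<3$ (note $p=2$ is vacuous here since $\gamma^{(n)}_2=\mathrm{tr}\,a^{(n)}\to\mathrm{tr}\,a\ne 0$), the third moment of the increment over $h(n)$ need not tend to zero at all. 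The correct mechanism is a truncation in the jump size: for fixed $\varepsilon>0$ split the remainder over $\{|\mathbf{Y}^{(n)}_{k+1}-\mathbf{x}|\le\varepsilon\}$ and its complement. On the small-jump event the remainder is $o(1)\cdot|\mathbf{Y}^{(n)}_{k+1}-\mathbf{x}|^2$ by uniform continuity of $D^2 f$, so its contribution over $h(n)$ is $o(1)\cdot\mathrm{tr}\,a^{(n)}(\mathbf{x})$. On the large-jump event one bounds $f$ and its first two derivatives by constants and uses Markov's inequality, $\Prob(|\mathbf{Y}^{(n)}_{k+1}-\mathbf{x}|>\varepsilon\mid\mathbf{Y}^{(n)}_k=\mathbf{x})/h(n)\le\varepsilon^{-p}\gamma^{(n)}_p(\mathbf{x})\to 0$. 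This is also precisely what guarantees continuity of the limit paths. With that adjustment, your argument matches the standard proof.
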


\subsection{The (Standard) Logistic Branching Process}
\label{ssec:LBprocess}

We already discussed in the introduction that the processes that will be relevant here are the so-called logistic branching processes, given by the solution of 
\[
	dX_{s} = (a X_{s} - c X_{s}^2) ds + \sqrt{\gamma X_{s}}dB_{s},\;s>0,
\]
with $X_{0} = x \ge 0$, $a\in \mathbb{R}$ and $c,\gamma >0$,  see also~\eqref{eq:generalSDEFellerDiffusionX} (and~\eqref{eq:SDEFellerDiffusionX} for the particular case that will appear here). 
In the remainder of this section we collect some key properties that will be handy. The first one is about the existence and uniqueness of solutions, see for example \cite[Cor.~2.11]{foucart2020entrance}, ~\cite[Sec.~3]{lambert2005branching}, \cite[Thm.~5.1]{fu2010stochastic} and references therein.
\begin{lem}
\label{lem:existenceanduniqueness}
For all initial states $x \ge 0$ and for all $a \in \mathbb{R}$ and $c,\gamma >0$, there exists a unique solution $(X_{a,c,\gamma,x},B_{a,c,\gamma,x},\mathscr{P}_{a,c,\gamma,x})$ to \eqref{eq:generalSDEFellerDiffusionX}. Moreover,~$X_{a,c,\gamma,x}$ is non-negative.
\end{lem}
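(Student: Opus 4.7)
The plan is to reduce this existence-and-uniqueness statement to the classical Yamada-Watanabe framework for one-dimensional SDEs with $1/2$-H\"older diffusion coefficient and locally Lipschitz drift, and then obtain non-negativity via comparison with the Feller branching diffusion.

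First I would extend the coefficients to all of $\R$ by setting $\sigma(u) \coloneqq \sqrt{\gamma u^+}$ and $b(u) \coloneqq au - cu^2$ for $u \in \R$. One checks $|\sigma(u) - \sigma(v)|^2 \le \gamma |u - v|$, so $\sigma$ satisfies the Yamada-Watanabe $1/2$-H\"older condition, while $b$ is locally Lipschitz. Yamada-Watanabe therefore yields pathwise uniqueness for the SDE killed at the exit time of any bounded interval. Weak existence on such intervals follows from standard mollification/Euler schemes together with a tightness argument, since $b$ and $\sigma$ are continuous and bounded on compacts. Combining pathwise uniqueness and weak existence (another incarnation of the Yamada-Watanabe principle) produces strong existence and uniqueness up to the exit time $\tau_N$ of the interval $[-1, N]$.

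Next I would rule out finite-time explosion, i.e., show that $\tau_N \to \infty$ almost surely as $N \to \infty$. Applying It\^o's formula to $V(x) = 1 + x$ and taking expectations, the martingale term vanishes and the $-cx^2$ contribution is non-positive, so Gronwall's inequality yields $\E[X_{s \wedge \tau_N}] \le (1 + x)e^{a^+ s}$; Markov's inequality then forces $\Prob(\tau_N \le T) \to 0$ for every finite $T$. This promotes the local solution to a genuine solution on all of $[0, \infty)$.

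For non-negativity I would compare with the Feller branching diffusion $Y$ driven by the same Brownian motion, solving $dY_s = a Y_s\, ds + \sqrt{\gamma Y_s^+}\, dB_s$ with $Y_0 = x$. Since the diffusion coefficients coincide and $b_Y(y) - b(y) = cy^2 \ge 0$, the one-dimensional SDE comparison theorem available under Yamada-Watanabe-type regularity yields $0 \le X_s \le Y_s$ almost surely for every $s \ge 0$; non-negativity of $Y$ itself is classical (and also follows from the same Tanaka/local-time argument applied to $Y_s^-$). The main subtlety throughout is the degeneracy of $\sigma$ at the origin, which rules out Lipschitz methods: the $1/2$-H\"older criterion of Yamada-Watanabe is the essential tool bridging that gap, with everything else — localization, the $-cx^2$-induced non-explosion, and one-dimensional comparison — being routine.
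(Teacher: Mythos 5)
The paper does not prove this lemma: it merely cites existence-and-uniqueness results from Foucart, Lambert, and Fu--Li and takes the statement as known, so there is no in-paper argument to compare against. Your Yamada--Watanabe route is the standard one those sources rely on, and the $1/2$-H\"older estimate $|\sigma(u)-\sigma(v)|^2\le\gamma|u-v|$ together with localization is the right skeleton for pathwise uniqueness and local existence. The non-negativity step, however, is a genuine gap. The one-dimensional comparison theorem applied to $X$ and $Y$ gives only the upper bound $X_s\le Y_s$; it does not produce $X_s\ge 0$. Nor can you compare with the constant solution $0$, because your extended drift $b(u)=au-cu^2$ is strictly negative on $(-\infty,0)$ when $a>0$, so $b$ does not dominate $0$ there. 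For the same reason the Tanaka/local-time argument applied to $X_s^-$ with this extension yields $\E[X_s^-]=-\E\big[\int_0^s\mathbb{1}\{X_r<0\}\,b(X_r)\,dr\big]$, which is non-decreasing rather than identically zero; the parenthetical Tanaka argument you sketch for $Y_s^-$ has the same problem, since $b_Y(y)=ay$ is also the wrong sign for $y<0$, $a>0$. The fix is available from what you already established: since $b(0)=\sigma(0)=0$, the constant process $0$ is the unique solution started at $0$ by your Yamada--Watanabe pathwise uniqueness, and the strong Markov property forces the solution started at $x>0$ to be absorbed upon first hitting $0$; hence $X$ never enters $(-\infty,0)$. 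Alternatively, extend the drift as $\tilde b(u)\coloneqq au^+-c(u^+)^2$, which vanishes on $(-\infty,0]$; then the Tanaka argument does close, and the two extensions coincide on $[0,\infty)$.

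A smaller point concerns non-explosion. Localizing to $[-1,N]$ and applying Gronwall with $V(x)=1+x$ requires a pointwise bound $b(x)\le CV(x)$ on $[-1,N]$, and this fails near $x=-1$ when $a<0$ (there $b(x)$ is of order $|a|$ while $V(x)\to 0$). Either establish non-negativity first and localize to $[0,N]$, where $b(x)\le|a|(1+x)$ does hold, or note that your comparison $X\le Y$ already rules out explosion upward, since the linear Feller diffusion $Y$ is conservative with $\E[Y_s]=xe^{as}$.
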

In what follows it will be convenient to consider a specific choice of the filtered probability space $\mathscr{P}_{a,c,\gamma,x} = (\Omega', \mathcal{F}', (\mathcal{F}'_s)_{s \ge 0}, \mathbb{P}')$ (where all components depend on the parameters $a,c,\gamma,x$) from the previous lemma that we construct as follows.
Let $C([0,\infty),\mathbb{R})$ be the space of all continuous maps $\xi:[0,\infty) \to\mathbb{R}$ and let $X$ be the coordinate process given by $X_s(\xi) = \xi(s)$ for all $s\geq0$ and ${\xi \in C([0,\infty),\mathbb{R})}$.
Additionally, consider the $\sigma$-algebra $\mathcal{F}=\sigma\{X_s \mid s\ge 0\}$ and equip the measurable space $(C([0,\infty),\mathbb{R}), \mathcal{F})$ with the filtration $(\mathcal{F}_s)_{s\ge 0}$ given by ${\mathcal{F}_s=\sigma\{X_r \mid 0\le r\le s\}}$ for all $s\geq0$, which we may complete and right-continuously extend in order to fulfil the usual conditions. 
Since $X_{a,c,\gamma,x}$ is a continuous process, it is possible to switch from $\mathscr{P}_{a,c,\gamma,x}$ to the canonical probability space $(C([0,\infty),\mathbb{R}),\mathcal{F},(\mathcal{F}_{s})_{s\ge 0},\mathbb{P}_{a,c,\gamma,x})$ via the map ${\Omega' \ni \xi' \mapsto X_{a,c,\gamma,x}(\xi') \in C([0,\infty),\mathbb{R})}$, where $\mathbb{P}_{a,c,\gamma,x}$ is the probability measure given by $\mathbb{P}_{a,c,\gamma,x}(A)= {\mathbb{P}'}((X_{a,c,\gamma,x})^{-1}(A))$ for all $A\in\mathcal{F}$. By this particular choice of probability measure we obtain that the coordinate process~$X$ on the space $(C([0,\infty),\mathbb{R}),\mathcal{F},(\mathcal{F}_{s})_{s\ge 0},\mathbb{P}_{a,c,\gamma,x})$ has the same law as $X_{a,c,\gamma,x}$ under $\mathbb{P}'$, i.e.~under $\mathbb{P}_{a,c,\gamma,x}$ the process~$X$ satisfies \eqref{eq:generalSDEFellerDiffusionX}. The following corollary is now an immediate consequence of Lemma~\ref{lem:existenceanduniqueness}, and a similar construction was also performed in~\cite{lambert2005branching}.
\begin{cor}
\label{cor:existenceanduniqueness}
For all initial states $x\ge 0$ and for all $a\in\mathbb{R}$ and $c,\gamma>0$, there is a unique solution $(X,B_{a,c,\gamma,x},\mathscr{P}_{a,c,\gamma,x})$ to \eqref{eq:generalSDEFellerDiffusionX}, where $X$ is the coordinate process and thus independent of $a,c,\gamma,x$. Moreover, $X$ is non-negative $\mathbb{P}_{a,c,\gamma,x}$-almost surely, where $\mathbb{P}_{a,c,\gamma,x}$ denotes the probability measure of $\mathscr{P}_{a,c,\gamma,x}$.
\end{cor}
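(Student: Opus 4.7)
The statement is essentially a bookkeeping consequence of Lemma~\ref{lem:existenceanduniqueness} combined with the canonical-space construction already sketched in the paragraph preceding the corollary. My plan is to make that transfer completely explicit and verify that no property is lost along the way.

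First, I would invoke Lemma~\ref{lem:existenceanduniqueness} to obtain, for each choice of parameters $a \in \mathbb{R}$, $c,\gamma > 0$ and initial value $x \ge 0$, a weak solution $(X_{a,c,\gamma,x}, B'_{a,c,\gamma,x}, \mathscr{P}'_{a,c,\gamma,x})$ to \eqref{eq:generalSDEFellerDiffusionX}, with $\mathscr{P}'_{a,c,\gamma,x} = (\Omega', \mathcal{F}', (\mathcal{F}'_s)_{s\ge 0}, \mathbb{P}')$; here I relabel the triple from the lemma with primes to reserve the unprimed notation for the canonical space. Since the statement only involves the law of the solution, I am free to replace this triple by any triple on which the coordinate process $X$ on $C([0,\infty), \mathbb{R})$ plays the role of the first component.

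Next, I would carry out the pushforward. Consider the map $\Phi \colon \Omega' \to C([0,\infty), \mathbb{R})$ defined by $\Phi(\omega') = (s \mapsto X_{a,c,\gamma,x}(s)(\omega'))$; this is well defined because $X_{a,c,\gamma,x}$ has continuous sample paths, and measurable with respect to $\mathcal{F}'$ and the $\sigma$-algebra $\mathcal{F} = \sigma\{X_s : s \ge 0\}$ on the canonical space. Define $\mathbb{P}_{a,c,\gamma,x} \coloneqq \mathbb{P}' \circ \Phi^{-1}$; equip $(C([0,\infty), \mathbb{R}), \mathcal{F})$ with the filtration $(\mathcal{F}_s)_{s\ge 0}$ and complete it right-continuously to meet the usual conditions (this is possible because $\mathbb{P}_{a,c,\gamma,x}$ is a Borel probability measure on the Polish space $C([0,\infty), \mathbb{R})$). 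By construction, under $\mathbb{P}_{a,c,\gamma,x}$ the coordinate process $X$ has the same finite-dimensional distributions as $X_{a,c,\gamma,x}$ under $\mathbb{P}'$.

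To produce the Brownian motion $B_{a,c,\gamma,x}$ on the canonical space, I would use the fact that $B'_{a,c,\gamma,x}$ is $(\mathcal{F}'_s)$-adapted and that, because of pathwise uniqueness in the diffusion coefficient (away from $0$) and standard results on weak solutions of Feller-type SDEs, $B'_{a,c,\gamma,x}$ can be recovered as a measurable functional of the path of $X_{a,c,\gamma,x}$ through the Itô isometry applied to the inverse of $\sqrt{\gamma X}$ on $\{X > 0\}$, extended by an independent Brownian motion on $\{X = 0\}$ if necessary (this is the standard `Brownian motion associated to a weak solution' construction). Pushing this functional through $\Phi$ defines $B_{a,c,\gamma,x}$ on the canonical space; the SDE \eqref{eq:generalSDEFellerDiffusionX} then holds under $\mathbb{P}_{a,c,\gamma,x}$ by measurability of the stochastic integral. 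The step I expect to be most delicate is precisely this verification that the stochastic-integral identity is preserved under the pushforward; once it is, uniqueness in law from Lemma~\ref{lem:existenceanduniqueness} forces $\mathbb{P}_{a,c,\gamma,x}$ to be the only measure on the canonical space for which $X$ solves \eqref{eq:generalSDEFellerDiffusionX} with $X_0 = x$, so the coordinate process $X$ really is independent of $a,c,\gamma,x$ as an object on $C([0,\infty), \mathbb{R})$, with all parameter dependence absorbed into the measure $\mathbb{P}_{a,c,\gamma,x}$.

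Finally, non-negativity transfers directly: by Lemma~\ref{lem:existenceanduniqueness}, $\mathbb{P}'(X_{a,c,\gamma,x}(s) \ge 0 \text{ for all } s \ge 0) = 1$, and the event $\{\xi \in C([0,\infty),\mathbb{R}) : \xi(s) \ge 0 \text{ for all } s \ge 0\}$ is the image under $\Phi$ of this almost-sure event, so it has $\mathbb{P}_{a,c,\gamma,x}$-measure one. This yields the asserted $\mathbb{P}_{a,c,\gamma,x}$-almost sure non-negativity of $X$ and completes the proof.
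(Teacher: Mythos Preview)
Your proposal is correct and follows exactly the canonical-space transfer that the paper itself outlines; the paper treats the corollary as an immediate consequence of Lemma~\ref{lem:existenceanduniqueness} together with the pushforward construction in the preceding paragraph, and your argument simply makes that transfer explicit. The one point you flag as delicate---recovering $B$ on the canonical space, with an independent Brownian extension after absorption if needed---is indeed the only technical subtlety, and it is resolved in the standard way you indicate (and in any case only the law of $X$ is used downstream).
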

For the rest of this paper we will adopt the above procedure and consider solutions to~\eqref{eq:generalSDEFellerDiffusionX} only with respect to the canonical probability space $(C([0,\infty),\mathbb{R}),\mathcal{F},(\mathcal{F}_{s})_{s\geq0},\mathbb{P}_{a,c,\gamma,x})$.
Our first main object of interest in Theorem~\ref{thm:main} is the time at which the logistic branching process $X$ hits zero. To this end, define
\[
    T(\xi) \coloneqq \inf\big\{s\ge 0 : \xi(s)=0\big\}, 
    \quad
    \xi \in C([0,\infty),\mathbb{R}).
\]
The author of \cite{lambert2005branching} establishes that $T$ is finite $\mathbb{P}_{a,c,\gamma,x}$-almost surely. Moreover, $X_{s}=0$ for all ${s\geq T}$ under $\mathbb{P}_{a,c,\gamma,x}$, i.e.~upon hitting zero the process becomes constant, which is why we also refer to~$T$ as \emph{absorption time}.
Although it seems to be well-known, we could not find an explicit reference for the fact that $T$ is continuous if the underlying process is a logistic Feller diffusion.
\begin{lem}
    \label{lem:continuityabsorptiontime}
    Let $x>0, a \in \mathbb{R}$ and $c,\gamma >0$. Then $T$ is continuous under $\mathbb{P}_{a,c,\gamma,x}$.
\end{lem}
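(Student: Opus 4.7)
The plan is to view $T:C([0,\infty),\mathbb{R})\to[0,\infty]$ as a functional on the canonical path space (equipped with uniform convergence on compacts) and show that it is continuous at $\mathbb{P}_{a,c,\gamma,x}$-almost every $\xi$, which is the standard sense in which a functional is almost surely continuous under a measure. First I would pin down the full-measure set of ``regular'' paths
\[
    A \coloneqq \bigl\{\xi \in C([0,\infty),\mathbb{R}) : \xi(0)=x,\ \xi \ge 0,\ T(\xi) < \infty,\ \xi \equiv 0 \text{ on } [T(\xi),\infty)\bigr\},
\]
which satisfies $\mathbb{P}_{a,c,\gamma,x}(A)=1$ by Corollary~\ref{cor:existenceanduniqueness}, the non-negativity assertion in Lemma~\ref{lem:existenceanduniqueness}, and the finite-absorption property of the logistic Feller diffusion recorded by Lambert~\cite{lambert2005branching}. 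It suffices to prove that $T$ is continuous at every $\xi \in A'$ for some further full-measure subset $A' \subseteq A$.

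Lower semi-continuity of $T$ at any $\xi \in A$ is immediate: for $t<T(\xi)$, continuity and strict positivity of $\xi$ on $[0,T(\xi))$ yield $m \coloneqq \min_{s\in[0,t]}\xi(s) > 0$, so any $\xi_n\to\xi$ uniformly on $[0,t]$ satisfies $\xi_n>m/2$ on $[0,t]$ eventually and hence $T(\xi_n)>t$; sending $t \uparrow T(\xi)$ gives $\liminf_n T(\xi_n) \ge T(\xi)$. For upper semi-continuity I would approximate $T$ by the level-crossing times $T_\epsilon(\xi) \coloneqq \inf\{s\ge 0 : \xi(s)\le\epsilon\}$. Each $T_\epsilon$ is continuous at every $\xi \in A$ because, by continuity of $\xi$ together with $\xi \equiv 0$ on $[T(\xi),\infty)$, the path strictly passes below level $\epsilon$ on an open subinterval of $(T_\epsilon(\xi),T(\xi))$, so small uniform perturbations of $\xi$ also cross level $\epsilon$ near $T_\epsilon(\xi)$. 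Moreover $T_\epsilon(\xi) \nearrow T(\xi)$ as $\epsilon \downarrow 0$ for every $\xi \in A$, and I would use the strong Markov property applied at $T_\epsilon$ combined with Lambert's quantitative estimates on the time from level $\epsilon$ to absorption~\cite{lambert2005branching} and the entrance-boundary analysis in~\cite{foucart2020entrance} to control $T(\xi) - T_\epsilon(\xi)$ uniformly in a neighborhood of $\mathbb{P}_{a,c,\gamma,x}$-typical $\xi$, yielding $\limsup_n T(\xi_n) \le T(\xi)$ at a.e.\ $\xi$.

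The main obstacle is the passage from continuity of each $T_\epsilon$ to continuity of $T$: the naive perturbation $\xi_n = \xi + n^{-1}$ satisfies $T(\xi_n) = +\infty$ while $T(\xi) < \infty$ for every $\xi \in A$, which shows that continuity of $T$ cannot hold pointwise on all of $A$. The resolution is to identify a suitable $A' \subseteq A$ of ``robustly absorbed'' paths — those whose extinction is stable against the uniform perturbations that actually arise from approximating measures — and to show $\mathbb{P}_{a,c,\gamma,x}(A') = 1$ by exploiting the non-degeneracy of the diffusion coefficient $\sqrt{\gamma X_s}$ together with Lambert's fine near-boundary analysis. This is the step where the specific structure of the logistic Feller diffusion is essential.
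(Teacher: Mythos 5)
You have misread the statement. ``$T$ is continuous under $\mathbb{P}_{a,c,\gamma,x}$'' does not mean that the functional $\xi\mapsto T(\xi)$ is continuous (a.s.\ or otherwise) on path space; it means that $T$ is a \emph{continuous random variable}, i.e.\ its distribution under $\mathbb{P}_{a,c,\gamma,x}$ has no atoms. This is the same usage as in Theorem~\ref{thm:main} (``a continuous and almost surely positive random variable $T_\alpha$'') and in the text following Lemma~\ref{lem:infiniteinitialvalue}, where the lemma is invoked precisely so that a sum of two independent random variables, one of them distributed as $T$ under $\mathbb{P}_{a,c,\gamma,x}$, is atomless; and it is what makes the passage $\lim_{\delta\to0}\mathbb{P}_{2\alpha,7/4,1,\infty}(T\ge s-\delta)=\mathbb{P}_{2\alpha,7/4,1,\infty}(T\ge s)$ at the end of Section~\ref{sec:mainProof} legitimate. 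Your proposal therefore proves (or attempts to prove) an entirely different claim.

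Beyond the misinterpretation, your own argument is also not closed: you correctly note that the perturbation $\xi_n=\xi+n^{-1}$ destroys continuity of the functional at \emph{every} absorbed path, and then defer the essential work of constructing the full-measure set $A'$ of ``robustly absorbed'' paths to unspecified quantitative estimates. That is precisely the hard part, and it is left as a gap. (It also would not be needed for the paper's later use of weak convergence of processes, which is handled in Lemma~\ref{lem:convergenceTprime} by evaluating $\mathbb{P}(X_s=0)$ directly rather than via a continuous-mapping argument for $T$.)

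The paper's actual proof is a one-liner in two parts: (i) since $X$ has continuous paths with $X_0=x>0$, we have $\mathbb{P}_{a,c,\gamma,x}(T=0)=0$; (ii) for $T>0$ the claim follows from a result of Kent~\cite[Thm.~5.1]{kent1980eigenvalue}, which expresses first passage times of one-dimensional diffusions as infinite convolutions of elementary mixtures of exponential laws, hence continuous. If you want an elementary alternative in the same spirit, observe that it suffices to show $\mathbb{P}_{a,c,\gamma,x}(T=s)=0$ for each fixed $s>0$; conditioning on $\mathcal{F}_{s-\epsilon}$, on the event $\{T>s-\epsilon\}$ the process at time $s-\epsilon$ is strictly positive and has a diffusion coefficient bounded away from zero in a neighborhood, so the conditional law of $T$ given $\mathcal{F}_{s-\epsilon}$ has a density near $s$. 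Either way, the argument is about the law of $T$, not about pathwise continuity of the hitting-time functional.
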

\begin{proof}
    Since $X$ has continuous paths and $X_{0}=x>0$ under $\mathbb{P}_{a,c,\gamma,x}$, we obtain $\mathbb{P}_{a,c,\gamma,x}(T=0)=0$. The continuity of $T$ then follows for example from~\cite[Thm.~5.1]{kent1980eigenvalue}, which states that $T$ is an infinite convolution of elementary mixtures of exponential distributions.
\end{proof}
Within our context, it will be necessary to consider solutions to \eqref{eq:generalSDEFellerDiffusionX} with initial value $x \to \infty$. The required results are covered by the following statement, whose proof can be found in \cite[Thm.~3.9, Cor.~3.10]{lambert2005branching}. Set 
\begin{equation}
    \label{eq:theta}
    \theta:[0,\infty) \to \mathbb{R},
    \qquad
    \lambda \mapsto \int_{0}^{\lambda}\exp\left(\frac{\gamma}{4c}v^{2} - \frac{a}{c}v\right)dv.
\end{equation}
\begin{lem}
\label{lem:infiniteinitialvalue}
Let $x\geq0, a \in \mathbb{R}$ and $c,\gamma >0$.
Then the expectation of $T$ under $\mathbb{P}_{a,c,\gamma,x}$ is finite and 
\[
    \mathbb{E}_{a,c,\gamma,x}\left[T\right] 
    = \frac1c\int_0^\infty\frac{\theta(\lambda)}{\lambda \theta'(\lambda)}(1-e^{-x\lambda})\;d\lambda.
\]
In addition, the measures $(\mathbb{P}_{a,c,\gamma,x})_{x\geq0}$ converge weakly, as $x\to\infty$, to the law $\mathbb{P}_{a,c,\gamma,\infty}$ of the so-called \emph{standard logistic branching process}.
Under $\mathbb{P}_{a,c,\gamma,\infty}$, the absorption time
$T$ is a continuous random variable that is finite almost surely and has finite expectation given by
\[ \mathbb{E}_{a,c,\gamma,\infty}\left[T\right] = \sup_{x\ge 0}\mathbb{E}_{a,c,\gamma,x}\left[T\right] =  \frac{1}{c}\int_{0}^{\infty}\frac{\theta(\lambda)}{\lambda \theta'(\lambda)}\;d\lambda. \]
\end{lem}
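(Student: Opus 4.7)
The plan is to work directly with the infinitesimal generator
\[
    \mathcal{L}f(x) = (ax - cx^2) f'(x) + \frac{\gamma x}{2} f''(x)
\]
of the logistic Feller diffusion, derive an ODE for $u(x) \coloneqq \mathbb{E}_{a,c,\gamma,x}[T]$, solve it to recover the $\theta$-formula, and then take the $x \to \infty$ limit via a stochastic monotonicity coupling. The first step is to check that $u(x)$ is finite and satisfies $\mathcal{L}u = -1$ on $(0,\infty)$ with $u(0^+) = 0$. This follows from Dynkin's formula applied to $u$ (or a smooth approximation thereof) stopped at $T \wedge s \wedge \tau_R$, where $\tau_R$ is the exit time of $X$ from $(0,R)$, together with Feller's boundary classification applied to~\eqref{eq:generalSDEFellerDiffusionX}: the term $-cx^2$ renders $+\infty$ an entrance boundary, so the diffusion started at any $x \ge 0$ reaches a compact set in finite expected time, while $0$ is a regular absorbing boundary, so $T < \infty$ almost surely.

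Next I would solve the linear ODE $\mathcal{L}u = -1$ explicitly using the classical scale/speed machinery. The scale density is $S'(y) \coloneqq \exp\bigl(\frac{c y^2 - 2 a y}{\gamma}\bigr)$, which satisfies $\mathcal{L}S = 0$, and the key observation is the identity $S'(y) = \theta'(2 c y / \gamma)$, which provides the bridge to the variable $\lambda$ in the statement. Integrating the ODE twice with the boundary condition $u(0) = 0$ produces $u(x)$ as a double integral involving $S'$ and the speed density $m(z) = 2/(\gamma z S'(z))$. After a change of variable $\lambda = 2 c y/\gamma$ in the outer integral and an integration by parts exchanging $\theta'$ for $\theta$, one arrives at the claimed form $\frac{1}{c} \int_0^\infty \frac{\theta(\lambda)}{\lambda \theta'(\lambda)}(1 - e^{-x\lambda}) d\lambda$, with the factor $1 - e^{-x\lambda}$ materialising from the resulting exponential integral. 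Integrability holds at $\infty$ because $\theta(\lambda)/\theta'(\lambda) = O(1/\lambda)$ as $\lambda \to \infty$ by Laplace's method applied to $\theta'(\lambda) \sim e^{\gamma \lambda^2/(4c)}$, and at $0$ because $\theta(\lambda) \sim \lambda$ and $(1-e^{-x\lambda})/\lambda$ is bounded.

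For the weak convergence as $x \to \infty$, I would invoke a pathwise comparison argument: since~\eqref{eq:generalSDEFellerDiffusionX} admits pathwise unique non-negative solutions (a Yamada--Watanabe argument adapted to the $\sqrt{x}$ noise), one can couple solutions with different initial conditions on a common probability space so that $X^{(x_1)}_s \le X^{(x_2)}_s$ a.s.\ whenever $x_1 \le x_2$. This forces $T$ to be monotone in the initial state, so its law has a well-defined limit as $x \to \infty$, and $\mathbb{E}_{a,c,\gamma,x}[T]$ is monotone in $x$. Monotone convergence applied to the displayed formula, using $1 - e^{-x\lambda} \uparrow 1$, yields the claimed supremum formula. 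To upgrade this to weak convergence of the full laws on $C([0,\infty), \mathbb{R})$ one needs tightness of $(X^{(x)}_{s_0})_{x \ge 0}$ for each fixed $s_0 > 0$, which is the \emph{coming down from infinity} property: the deterministic ODE $\dot y = a y - c y^2$ has solutions that converge to a finite limit in bounded time regardless of the initial condition, and the noise perturbation can be controlled via an It\^o computation applied to $\log X_s$ or $X_s^{-1}$.

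The main technical obstacle is establishing this uniform-in-$x$ tightness; once it is in hand, continuity and almost sure finiteness of $T$ under $\mathbb{P}_{a,c,\gamma,\infty}$ follow by conditioning on $\mathcal{F}_{s_0}$, applying Lemma~\ref{lem:continuityabsorptiontime} to the resulting process with a.s.\ finite initial value $X_{s_0}$, and sending $s_0 \to 0$. The identity $\mathbb{E}_{a,c,\gamma,\infty}[T] = \sup_{x \ge 0} \mathbb{E}_{a,c,\gamma,x}[T]$ then follows from the monotone convergence argument above, combined with the uniform integrability of $T$ implied by the logistic drift and the tail bound already controlled by the explicit formula.
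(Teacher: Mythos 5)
The paper itself does not actually prove this lemma from first principles: it simply cites \cite[Thm.~3.9, Cor.~3.10]{lambert2005branching}, together with a brief remark deducing the continuity of $T$ under $\mathbb{P}_{a,c,\gamma,\infty}$ from Lemma~\ref{lem:continuityabsorptiontime}. Your proposal is therefore far more ambitious, and much of the overall architecture (Dynkin/ODE argument, coupling monotonicity, coming down from infinity via It\^o estimates on $X_s^{-1}$) is sensible and could be made rigorous with care.

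However, there is a genuine gap in the step that claims to derive the displayed formula $\frac1c\int_0^\infty\frac{\theta(\lambda)}{\lambda\theta'(\lambda)}(1-e^{-x\lambda})\,d\lambda$ from the scale/speed solution of $\mathcal L u=-1$. Carrying out that computation carefully, with $S'(y)=\theta'(2cy/\gamma)$ and $S(y)=\frac{\gamma}{2c}\theta(2cy/\gamma)$, and applying Fubini before changing variables $\lambda=2cz/\gamma$, one obtains
\[
\mathbb{E}_{a,c,\gamma,x}[T]
=\frac{2}{\gamma}\int_0^x S'(y)\int_y^\infty\frac{dz}{zS'(z)}\,dy
=\frac{1}{c}\left[\int_0^{\mu_0}\frac{\theta(\lambda)}{\lambda\theta'(\lambda)}\,d\lambda + \theta(\mu_0)\int_{\mu_0}^\infty\frac{d\lambda}{\lambda\theta'(\lambda)}\right],
\qquad \mu_0=\frac{2cx}{\gamma}.
\]
This is a closed-form expression, but it is \emph{not} the Laplace-type expression stated in the lemma: the factor $1-e^{-x\lambda}$ does not ``materialise'' from any change of variable or integration by parts in this double integral, since the outer range here is the finite interval $[0,\mu_0]$ rather than $[0,\infty)$. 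The two formulas are indeed equal (both represent $\mathbb{E}_x[T]$), but proving the equality is exactly the nontrivial content of Lambert's argument, which relies on a Laplace-transform duality: writing $\mathcal L(e^{-\lambda x})=x e^{-\lambda x}\bigl(\frac{\gamma\lambda^2}{2}-a\lambda+c\lambda x\bigr)$, recognising the right-hand side as the action of a dual operator in $\lambda$ on $e^{-\lambda x}$, and integrating against the correct test function $\lambda\mapsto\theta(\lambda)/(\lambda\theta'(\lambda))$. Your scale/speed route does give the $x\to\infty$ value $\frac1c\int_0^\infty\frac{\theta(\lambda)}{\lambda\theta'(\lambda)}\,d\lambda$ correctly (it is the common limit of both expressions, and monotone convergence on the $1-e^{-x\lambda}$ form would then match), but the finite-$x$ formula in the statement is not obtained by the manipulations you describe. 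Either prove the explicit identity between the two expressions, or replace the scale/speed computation by Lambert's duality argument.
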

We remark that the continuity of $T$ under $\mathbb{P}_{a,c,\gamma,\infty}$ is not explicitly claimed in~\cite{lambert2005branching}. However, it follows from Lemma~\ref{lem:continuityabsorptiontime}, as we can apply the Markov property of $X$ and restart the process at some $x>0$ to express $T$ under $\mathbb{P}_{a,c,\gamma,\infty}$ as the sum of two independent random variables that are distributed as $T$ under $\mathbb{P}_{a,c,\gamma,x}$ and the hitting time $\inf\{s\ge0 \mid \xi(s)=x\}$ under $\mathbb{P}_{a,c,\gamma,\infty}$.

Within the proof of Theorem~\ref{numbjumps} we will also need information about the distribution of $\int_{0}^{\infty}X_{s}ds$, where~$X$ is a logistic branching process that satisfies \eqref{eq:generalSDEFellerDiffusionX}.
Set
\[
    A_s(\xi) \coloneqq \int_0^s\xi(u)du, \quad s \ge 0, ~\xi \in C([0,\infty),\mathbb{R}) ~ .
\]
In order to study the distribution of $A_{\infty}$ under $\mathbb{P}_{a,c,\gamma,x}$ the following lemma from~\cite[Prop.~3.1]{lambert2005branching} will be very handy, as it establishes that under an appropriate time-change the logistic branching process fulfills another SDE, which, in turn, is much easier to analyse.
\begin{lem}
\label{lem:timechangedOUprocess}
For any $\xi \in C([0,\infty),\mathbb{R})$ let $A(\xi) \coloneqq (A_s(\xi))_{s \ge 0}$ and let
$A^{-1}(\xi)$ be its right-inverse, i.e. $(A(\xi) \circ A^{-1}(\xi))_s = s$ for all $s \in \{A_u(\xi)\mid u\ge0\}$.
Let $X$ be the coordinate process. Then, for all $x\ge0$, $a\in \mathbb{R}$ and $c,\gamma>0$, the process $R \coloneqq X \circ A^{-1}$ under $\mathbb{P}_{a,c,\gamma,x}$ satisfies the SDE
\begin{equation}
\label{eq:SDEOUprocess}
    dR_{s} = (a - cR_{s})ds + \sqrt{\gamma} dB_{s},\; s \in [0,A_{\infty}], \text{ with initial condition } R_{0}=x.
\end{equation}
\end{lem}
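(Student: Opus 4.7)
The plan is to apply the standard Lamperti-type time change, reducing the logistic Feller diffusion to an Ornstein--Uhlenbeck type process by absorbing the factor $X_s$ in both the drift and the diffusion coefficient into a time change. Fix $x\ge 0$, $a\in\mathbb{R}$, $c,\gamma>0$, work under $\mathbb{P}_{a,c,\gamma,x}$, and write $\tau \coloneqq A^{-1}$. Since $X$ is non-negative with continuous paths and $A_s=\int_0^s X_u\,du$ is non-decreasing and absolutely continuous with derivative $X_s$, the map $\tau$ is well-defined, continuous, and strictly increasing on $[0,A_\infty)$ on the event $\{X_s>0 \text{ for } s<T\}$, which has full measure by Lemma~\ref{lem:existenceanduniqueness} and the fact that $X$ stays positive until absorption. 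Differentiating $A_{\tau(s)}=s$ gives $\tau'(s)=1/X_{\tau(s)}=1/R_s$.

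Next I rewrite the SDE for $X$ in integral form and substitute $u=\tau(v)$, so that $du=\tau'(v)\,dv=dv/R_v$. The drift term becomes
\[
\int_0^{\tau(s)}\bigl(aX_u-cX_u^2\bigr)\,du
= \int_0^{s}\bigl(aR_v-cR_v^2\bigr)\frac{dv}{R_v}
= \int_0^{s}(a-cR_v)\,dv,
\]
which gives precisely the OU drift. This is essentially the ``classical'' computation and poses no difficulty beyond checking that the time change is absolutely continuous with the stated derivative on the relevant interval.

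The main obstacle is the stochastic integral
\[
N_s \;\coloneqq\; \int_0^{\tau(s)}\sqrt{\gamma X_u}\,dB_u,\qquad s\in[0,A_\infty),
\]
which must be handled via a martingale time-change theorem rather than a pathwise substitution. The plan is to verify that $N$ is a continuous $(\mathcal{F}_{\tau(s)})_{s\ge 0}$-local martingale whose quadratic variation is
\[
\langle N\rangle_s \;=\; \int_0^{\tau(s)}\gamma X_u\,du \;=\; \gamma A_{\tau(s)} \;=\; \gamma s.
\]
Applying the Dambis--Dubins--Schwarz theorem (see e.g.\ Revuz--Yor, Ch.\ V) then produces a standard Brownian motion $\tilde B$, defined on a possibly enlarged filtered probability space, such that $N_s=\sqrt{\gamma}\,\tilde B_s$ for all $s\in[0,A_\infty)$. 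Combining this with the drift computation yields
\[
R_s \;=\; x + \int_0^s(a-cR_v)\,dv + \sqrt{\gamma}\,\tilde B_s,\qquad s\in[0,A_\infty),
\]
which is exactly the desired SDE~\eqref{eq:SDEOUprocess} (after renaming $\tilde B$ as $B$).

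The only genuinely subtle point is justifying the time change on the interval $[0,A_\infty]$, in particular handling the boundary as $R_s$ may hit the absorbing level related to $a/c$ (or, more relevantly here, ensuring everything remains well-defined up to the terminal time $A_\infty$). I would address this by first establishing the identity on any subinterval $[0,A_{T-\epsilon}]$ with $\epsilon>0$, where $X$ stays bounded away from zero and $\tau$ is smooth, and then taking $\epsilon\downarrow 0$ and extending by continuity of both $R$ and the OU solution driven by $\tilde B$; continuity of $X$ and of the stochastic integral ensure no pathology at the endpoint. This gives the claim.
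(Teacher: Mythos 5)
The paper does not give a proof of this lemma at all: it is quoted directly from \cite[Prop.~3.1]{lambert2005branching}, and the text surrounding it makes clear the authors are simply importing Lambert's result. So there is no in-paper argument to compare yours against. That said, your derivation is correct and is in fact the same Lamperti-type time-change argument that underlies Lambert's proposition. The change of variables for the drift, the identification of the quadratic variation $\langle N\rangle_s=\gamma s$, and the appeal to Dambis--Dubins--Schwarz (with the caveat about enlarging the space when $\langle N\rangle_\infty<\infty$) are all exactly the right steps, and your handling of the endpoint $A_\infty$ by working on $[0,A_{T-\epsilon}]$ and passing to the limit is a sound way to justify the statement on the closed interval. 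The only small point you leave implicit is that $\tau(s)=\inf\{u\ge 0: A_u\ge s\}$ is an $(\mathcal{F}_u)$-stopping time (it is, as the first hitting time of a level by the continuous adapted increasing process $A$), which is what legitimises treating $(\mathcal{F}_{\tau(s)})_{s\ge0}$ as a filtration and $N$ as a local martingale with respect to it. With that noted, the proof is complete; in short, you have supplied a correct proof of a result the paper only cites.
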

Processes satisfying~\eqref{eq:SDEOUprocess} for all $s \ge 0$ are called \emph{Ornstein-Uhlenbeck (O-U) processes} and are very well-studied. With this lemma at hand we show the following handy relation between $A_\infty$ (under $\mathbb{P}_{a,c,\gamma,x}$) and the first time that the corresponding O-U process hits zero.

\begin{lem}
\label{lem:integralequalsTprime}
Let $R$ be defined as in Lemma~\ref{lem:timechangedOUprocess}. For all $x\ge 0$, $a\in \mathbb{R}$ and $c,\gamma>0$, 
\[A_\infty = T(R) = \inf\{s \ge 0: R_s = 0\} \quad \mathbb{P}_{a,c,\gamma,x}\text{-almost surely.}\]
\end{lem}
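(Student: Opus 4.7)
The plan is to unpack the definitions and show that the time-change $A^{-1}$ carries the absorption time $T(X)$ of the logistic branching process to the absorption time of the time-changed process $R$. The underlying intuition is straightforward: once $X$ hits zero it stays there, so $A$ increases only up to the finite value $A_\infty = \int_0^{T(X)} X_u \, du$; hence the right-inverse $A^{-1}$ maps the interval $[0, A_\infty]$ onto $[0, T(X)]$, and in particular pulls $A_\infty$ back to $T(X)$, where $X$ vanishes.

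First I would dispose of the degenerate case $x = 0$: there $X \equiv 0$ by Corollary~\ref{cor:existenceanduniqueness}, so both $A_\infty$ and $T(R)$ equal $0$. Assuming $x > 0$, I would then use Lemma~\ref{lem:infiniteinitialvalue} to ensure that $T(X)$ is finite almost surely, and combine the definition $T(X) = \inf\{s \ge 0 : X_s = 0\}$ with the non-negativity of $X$ (Corollary~\ref{cor:existenceanduniqueness}) to conclude that $X_u > 0$ for every $u \in [0, T(X))$ while $X_u = 0$ for every $u \ge T(X)$ (the latter being the defining absorption property noted after Lemma~\ref{lem:continuityabsorptiontime}). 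From this it is immediate that $A$ is continuous, strictly increasing on $[0, T(X)]$, and constant on $[T(X), \infty)$, so $A_\infty = A_{T(X)}$ and the range of $A$ is exactly $[0, A_\infty]$.

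Next I would invoke this structural description of $A$ to conclude that the right-inverse $A^{-1}$ is well-defined, continuous and strictly increasing on $[0, A_\infty]$, with $A^{-1}(0) = 0$ and $A^{-1}(A_\infty) = T(X)$. Composing with $X$ then gives that $R = X \circ A^{-1}$ is continuous on $[0, A_\infty]$, that $R_s = X_{A^{-1}(s)} > 0$ for every $s \in [0, A_\infty)$ (since $A^{-1}(s) < T(X)$ there), and that $R_{A_\infty} = X_{T(X)} = 0$ by continuity of $X$. Combining these two statements yields $T(R) = \inf\{s \ge 0 : R_s = 0\} = A_\infty$, as desired.

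The main obstacle is not any deep analytic estimate but rather the bookkeeping around the right-inverse $A^{-1}$ at the boundary point $A_\infty$: because $A$ becomes flat after time $T(X)$, it is not strictly monotone on the whole half-line, so some care is needed to identify the right-inverse unambiguously at $A_\infty$ and to verify continuity of $R$ up to that point. Once this is settled, the identification $R_{A_\infty} = 0$, and hence the equality $T(R) = A_\infty$, falls out directly.
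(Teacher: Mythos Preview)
Your proposal is correct and follows essentially the same route as the paper: both arguments work pathwise on the almost-sure event where $X$ is non-negative, absorbing at zero, and $T(X)<\infty$, observe that $A$ is strictly increasing on $[0,T(X))$ and constant thereafter so that $A_\infty=A_{T(X)}$, and then read off $R_{A_\infty}=X_{A^{-1}(A_\infty)}=0$ together with $R_s>0$ for $s<A_\infty$. The only cosmetic difference is that the paper proves the two inequalities $A_{T(X)}\ge T(R)$ and $A_{T(X)}\le T(R)$ separately (the latter via $0=R_{T(R)}=X_{A^{-1}_{T(R)}}$, hence $A^{-1}_{T(R)}\ge T(X)$), whereas you obtain $T(R)=A_\infty$ in one stroke by showing $R$ is strictly positive on $[0,A_\infty)$ and vanishes at $A_\infty$; both handle the boundary ambiguity of the right-inverse at $A_\infty$ by using that $X$ vanishes for all $u\ge T(X)$.
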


\begin{proof}
Let $X,A^{-1}$ be as in Lemma~\ref{lem:timechangedOUprocess}.
Let $\xi \in C([0,\infty),\mathbb{R})$ be non-negative such that $\xi(s)=0$ for all $s\ge T(\xi)$, so that $\xi(s) > 0$ for all $s\in[0,T(\xi))$. In order to simplify notation we omit the dependence on $\xi$ in the forthcoming calculations, that is, we write $R_s$ for $R_s(\xi)$, $R = (R_s)_{s \ge 0}$ and similarly for $A,A^{-1},X$ and $T$; in particular, since $X$ is the coordinate process, $X = \xi$. Note that by our choice of $\xi$ the integral $A$ is strictly increasing on $[0,T)$ and hence invertible on $[0,T)$, i.e., $A^{-1}$ is also the left-inverse of $A$ on $[0,T)$. Therefore, $A^{-1}(A_{T})\ge T$ and, by definition of $R$ and since $\xi(s) = 0$ for all $s\ge T$,
\[
    R_{A_T} = (X \circ A^{-1})_{A_T} = \xi(A^{-1}(A_{T})) = 0,
\]
so that $A_T \ge T(R)$.
In order to see the converse inequality, note that, again by definition of $R$,
\[
    0
    = R_{T(R)}
    = X_{A^{-1}_{T(R)}}
    = \xi(A^{-1}_{T(R)}),
\]
and so $T \le A^{-1}_{T(R)}$. Since $\xi$ is non-negative, $A$ is non-decreasing and so
\begin{equation*}
     A_T \le A_{A^{-1}_{T(R)}} = T(R).
\end{equation*}
All in all, $A_T = T(R)$ whenever $\xi$ is non-negative with $\xi(s)=0$ for all $s\ge T$. To conclude the proof, Corollary~\ref{cor:existenceanduniqueness} guarantees that $\mathbb{P}_{a,c,\gamma,x}$-almost surely, the process $X$ is non-negative and absorbing, where the latter implies~$A_T = A_\infty$.
\end{proof}
The connection established in the previous lemma will be very useful, as the hitting times of O-U processes are well-studied, see for example~\cite[Rem.~2.3]{alili2005representations}. We just need the case $a=0$.
\begin{lem}
\label{lem:densityTprime}
Let $R$ be defined as in Lemma~\ref{lem:timechangedOUprocess}. For all $x\ge 0$ and $c,\gamma>0$, the density of $T(R)$ under $\mathbb{P}_{0,c,\gamma,x}$ is given by 
\[
    p(s) = \frac{2c^{3/2} x}{\sqrt{\gamma \pi}}e^{2cs}\left(e^{2cs} - 1\right)^{-3/2}\exp\left(-\frac{cx^2}{\gamma(e^{2cs}-1)} \right),\quad s\ge0. 
\]
\end{lem}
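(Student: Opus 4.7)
The plan is to reduce the problem to the well-known first-passage time distribution of a standard Brownian motion by solving the linear SDE explicitly and then applying a deterministic time change. First, using $e^{cs}$ as an integrating factor, i.e.~applying It\^o's formula to $Y_s \coloneqq e^{cs}R_s$, the SDE~\eqref{eq:SDEOUprocess} with $a=0$ yields
\[
    R_s = e^{-cs}\Bigl(x + \sqrt{\gamma}\,M_s\Bigr), \qquad M_s \coloneqq \int_0^s e^{cu}\,dB_u.
\]
The process $M$ is a continuous local martingale with \emph{deterministic} quadratic variation $\langle M\rangle_s = \int_0^s e^{2cu}\,du = \tau(s)$, where $\tau(s) \coloneqq (e^{2cs}-1)/(2c)$. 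By the Dambis--Dubins--Schwarz theorem there exists a standard Brownian motion $\beta$ starting at $0$ such that $M_s = \beta_{\tau(s)}$.

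Since $e^{-cs} > 0$ and, as $c>0$, $\tau\colon[0,\infty)\to[0,\infty)$ is a continuous strictly increasing bijection, the hitting time of zero by $R$ satisfies
\[
    T(R) = \inf\{s\ge 0 : x + \sqrt{\gamma}\,\beta_{\tau(s)} = 0\} = \tau^{-1}(\sigma_{y}),
\]
where $\sigma_y \coloneqq \inf\{t\ge 0 : \beta_t = -y\}$ is the first passage time of $\beta$ to level $-y$, with $y \coloneqq x/\sqrt{\gamma}$. By the classical reflection principle (or L\'evy's formula), $\sigma_y$ has the density
\[
    f_{\sigma_y}(t) = \frac{y}{\sqrt{2\pi t^{3}}}\exp\Bigl(-\frac{y^{2}}{2t}\Bigr), \qquad t>0.
\]

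Applying the change of variables formula to $T(R) = \tau^{-1}(\sigma_y)$ with $\tau'(s) = e^{2cs}$ produces
\[
    p(s) = f_{\sigma_y}(\tau(s))\,\tau'(s),
\]
and substituting $\tau(s) = (e^{2cs}-1)/(2c)$ together with $y^2 = x^2/\gamma$ gives, after collecting constants, exactly the announced expression. The computation is entirely routine; the only thing to keep an eye on is the bookkeeping of the powers of $2c$ arising from $\tau(s)^{3/2}$ against those coming from $\tau'(s)$ and the normalisation of the L\'evy density, and no genuine obstacle is expected beyond this.
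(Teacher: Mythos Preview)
Your argument is correct. The reduction via $R_s = e^{-cs}(x+\sqrt{\gamma}\,\beta_{\tau(s)})$ with $\tau(s)=(e^{2cs}-1)/(2c)$ is exactly the standard route, and the final computation checks: $(2c)^{3/2}/\sqrt{2\pi}=2c^{3/2}/\sqrt{\pi}$ and $y^{2}/(2\tau(s))=cx^{2}/(\gamma(e^{2cs}-1))$, so multiplying by $\tau'(s)=e^{2cs}$ yields the stated density.

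The paper does not prove this lemma at all; it simply quotes the formula from the literature (Alili--Patie--Pedersen, Remark~2.3). Your derivation is the natural self-contained justification behind that citation. One minor point worth making explicit is that $R$ in Lemma~\ref{lem:timechangedOUprocess} is a~priori only defined on $[0,A_{\infty}]$; to run your argument you tacitly extend it to the unique strong solution of $dR_{s}=-cR_{s}\,ds+\sqrt{\gamma}\,dB_{s}$ on $[0,\infty)$, which is harmless since by Lemma~\ref{lem:integralequalsTprime} the original $R$ already reaches zero at time $A_{\infty}$ and the extension does not affect the first hitting time.
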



\subsection{Estimates for Martingales (with Subgaussian Increments)}
\label{ssec:preliminariesmartingales}

Here we collect some preliminary facts concerning subgaussian random variables and martingales. Following standard notation, see for example the extensive book~\cite{Boucheron2004}, we call a random variable~$X$ \emph{subgaussian with parameter} $\sigma^2  >0$ if 
\[
    \mathbb{E}\Big[e^{s(X-\mathbb{E}[X])}\Big]
    \le e^{s^2\sigma^2/2} \text{ for every } s\in \mathbb{R}.
\]
The following result provides a handy way of checking whether a random variable is subgaussian.
\begin{thm}[Thm.~2.1 in~\cite{Boucheron2004}]
\label{subG}
Let $X$ be a random variable with $\mathbb{E}[X]=0$. If, for some $v>0$, 
\[
    \mathbb{P}(X>\lambda),\mathbb{P}(X<-\lambda)\leq e^{-\lambda^2/2v}
    \quad
    \text{for all } \lambda >0,
\]
then $X$ is subgaussian with parameter $\sigma^2=16v$.
\end{thm}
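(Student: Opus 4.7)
The plan is to bound the moment generating function $\mathbb{E}[e^{sX}]$ term-by-term via explicit moment estimates and to conclude that it is at most $e^{8vs^2}$, which is exactly the claimed subgaussian bound with $\sigma^2 = 16v$. The crucial ingredient is the hypothesis $\mathbb{E}[X] = 0$, which eliminates the linear term in the Taylor expansion of the exponential; without it no subgaussian bound could hold.

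First I would translate the tail bound into moment estimates. The two-sided assumption gives $\mathbb{P}(|X| > \lambda) \leq 2e^{-\lambda^2/(2v)}$, so by the layer-cake formula and the substitution $u = \lambda^2/(2v)$,
\[
    \mathbb{E}[|X|^p] = \int_{0}^{\infty} p\lambda^{p-1}\mathbb{P}(|X| > \lambda)\,d\lambda \leq 2(2v)^{p/2}\Gamma(p/2+1)
\]
for every $p \geq 1$. Specialising to $p = 2j$ yields $\mathbb{E}[X^{2j}] \leq 2\,j!\,(2v)^{j}$, while the odd moments are estimated via Cauchy-Schwarz, $|\mathbb{E}[X^{2j+1}]| \leq \sqrt{\mathbb{E}[X^{2j}]\,\mathbb{E}[X^{2j+2}]}$.

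Since $\mathbb{E}[X] = 0$, expanding the exponential gives $\mathbb{E}[e^{sX}] = 1 + \sum_{k \geq 2} s^k\,\mathbb{E}[X^k]/k!$. Using the combinatorial inequality $(2j)! = \binom{2j}{j}(j!)^2 \geq 2^j (j!)^2$, the even-index part is bounded by
\[
    \sum_{j \geq 1} \frac{s^{2j}\,\mathbb{E}[X^{2j}]}{(2j)!} \leq 2\sum_{j \geq 1} \frac{(vs^2)^j}{j!} = 2\bigl(e^{vs^2} - 1\bigr),
\]
and the same inequality combined with Cauchy-Schwarz produces a bound of the form $C\sqrt{v}\,|s|\,(e^{vs^2} - 1)$ for the odd-index part (with $C = \sqrt{2}$ suffices). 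Summing,
\[
    \mathbb{E}[e^{sX}] \leq 1 + 2\bigl(1 + C\sqrt{v}\,|s|\bigr)\bigl(e^{vs^2} - 1\bigr).
\]

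To finish I would distinguish two regimes. For $vs^2 \leq 1$, the elementary bounds $e^u - 1 \leq (e-1)u$ on $[0,1]$ and $\sqrt{v}\,|s| \leq 1$ yield $\mathbb{E}[e^{sX}] \leq 1 + C' v s^2 \leq e^{C' v s^2}$ with $C' < 8$; for $vs^2 > 1$, the factor $e^{vs^2}$ dominates and the slack of $7vs^2$ remaining in the target exponent $8vs^2$ easily absorbs both the polynomial prefactor and the additive constant. In either case $\mathbb{E}[e^{sX}] \leq e^{8vs^2} = e^{\sigma^2 s^2/2}$ with $\sigma^2 = 16v$. The main obstacle is handling the odd-order moments, whose signs are uncontrolled; the mean-zero cancellation is what prevents a stray linear-in-$s$ term from ruining the argument. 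The constant $16$ is an artefact of the slack in Cauchy-Schwarz and in $\binom{2j}{j} \geq 2^j$, and is not sharp.
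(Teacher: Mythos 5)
The paper does not prove this statement; it is quoted verbatim as Theorem~2.1 of the cited reference \cite{Boucheron2004} and used as a black box, so there is no in-paper proof to compare against and your argument has to stand on its own.

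Your strategy---layer-cake for even moments, Cauchy--Schwarz for odd moments, power-series expansion of the moment generating function, and a split on the size of $vs^2$---is the standard textbook route, and the role you assign to $\E[X]=0$ (killing the linear term in the expansion) is exactly right. The moment bound $\E[|X|^p]\le 2(2v)^{p/2}\Gamma(p/2+1)$, the inequality $(2j)!\ge 2^j(j!)^2$, and the even-part estimate $2(e^{vs^2}-1)$ all check out. However, there is a concrete arithmetic slip at the very end. With $C=\sqrt{2}$, in the regime $vs^2\le 1$ your bounds give $\E[e^{sX}] \le 1 + 2(1+\sqrt{2})(e-1)\,vs^2$, and
\[
    C' \coloneqq 2(1+\sqrt{2})(e-1) \approx 8.30 > 8,
\]
so the concluding step ``$\,1 + C'vs^2 \le e^{C'vs^2}$ with $C'<8$'' does not hold; indeed $e^{8x}-1-C'x<0$ for all sufficiently small $x>0$ whenever $C'>8$, since the linear term of $e^{8x}$ has coefficient exactly $8$. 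The source of the loss is that the crude replacements $\sqrt{v}|s|\le 1$ and $e^u-1\le(e-1)u$ discard the vanishing of the odd-index contribution near $s=0$, where the true leading coefficient of $\E[e^{sX}]-1$ is only $2vs^2$. The theorem is nevertheless true with $\sigma^2=16v$, and the gap can be closed within your framework by verifying the two-sided inequality directly: writing $x=vs^2$, one checks that
\[
    1 + 2\bigl(1+\sqrt{2x}\bigr)\bigl(e^x-1\bigr) \le e^{8x},\qquad x\ge 0,
\]
since both sides agree at $x=0$ and the derivative of the difference is at least $8e^{8x}-(2+3\sqrt{2x})e^x$, which is positive because $8e^{7x}\ge 8+56x$ and $6+56y^2-3\sqrt{2}\,y>0$ for all $y=\sqrt{x}\ge 0$ (the discriminant $18-1344$ is negative). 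As written, though, the proposal does not quite establish the stated constant $\sigma^2=16v$ and needs this (small) patch.
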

Let $(\mathcal{F}_t)_{t\in \mathbb{N}_0}$ be a filtration on some probability space. Let $(M_t)_{t\in \mathbb{N}_0}$ be a real-valued, discrete-time stochastic process adapted to $(\mathcal{F}_t)_{t\in \mathbb{N}_0}$ and such that $\mathbb{E}[M_t]<\infty$ for each $t$. We say that $(M_t)_{t\in \mathbb{N}_0}$ is called a \textit{martingale} if $\mathbb{E}[M_{t+1}|\mathcal{F}_t]=M_t$ for each $t\in \mathbb{N}_0$. We say that $(M_t)_{t\in \mathbb{N}_0}$ is a \emph{submartingale} if $\mathbb{E}[M_{t+1}|\mathcal{F}_t]\geq M_t$, whereas it is called a \emph{supermartingale} if $\mathbb{E}[M_{t+1}|\mathcal{F}_t]\leq M_t$ for $t \in \mathbb{N}_0$. We continue with a simple lemma that provides a concentration bound for subgaussian supermartingales. Even though such a result is probably known or implicit in other works, we could not find a proper reference and hence we include the (short) proof for completeness.
\begin{lem}\label{lem:martingale}
Let $(M_t)_{t\in \mathbb{N}_0}$ be a supermartingale with respect to a filtration $(\mathcal{F}_{t})_{t\in \mathbb{N}_0}$ such that $M_0=0$ and $M_t$ conditional on $\mathcal{F}_{t-1}$ is a subgaussian random variable with parameter~$\sigma_t^2$. Then
\[
    \mathbb{P}(M_t\ge \lambda)
    \le
    \exp\left(-\frac{\lambda^2}{2\sum_{i=1}^t \sigma_i^2}\right),
    \qquad
     \lambda>0,~~t\in \mathbb{N}_0 \enspace.
\]
\end{lem}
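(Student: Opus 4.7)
The plan is to run the standard Chernoff exponential-moment argument, chaining the conditional subgaussian bound along the filtration and exploiting the supermartingale property at each step. Concretely, for any $s>0$ I would first apply Markov's inequality to $e^{sM_t}$ to obtain
\[
    \mathbb{P}(M_t \ge \lambda) \le e^{-s\lambda}\,\mathbb{E}\bigl[e^{sM_t}\bigr],
\]
so the task reduces to bounding the moment generating function of $M_t$ from above.

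To control $\mathbb{E}[e^{sM_t}]$ I would condition on $\mathcal{F}_{t-1}$ and factor
\[
    e^{sM_t} = e^{s\mathbb{E}[M_t\mid\mathcal{F}_{t-1}]}\cdot e^{s(M_t-\mathbb{E}[M_t\mid\mathcal{F}_{t-1}])}.
\]
The first factor is $\mathcal{F}_{t-1}$-measurable, and since $s>0$ the supermartingale assumption $\mathbb{E}[M_t\mid\mathcal{F}_{t-1}]\le M_{t-1}$ gives $e^{s\mathbb{E}[M_t\mid\mathcal{F}_{t-1}]}\le e^{sM_{t-1}}$. The conditional expectation of the second factor is bounded by $e^{s^2\sigma_t^2/2}$ by the hypothesis that $M_t\mid\mathcal{F}_{t-1}$ is subgaussian with parameter $\sigma_t^2$. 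Taking expectations yields $\mathbb{E}[e^{sM_t}]\le e^{s^2\sigma_t^2/2}\,\mathbb{E}[e^{sM_{t-1}}]$, and iterating this recursion together with $M_0=0$ gives $\mathbb{E}[e^{sM_t}]\le \exp\bigl(\tfrac{s^2}{2}\sum_{i=1}^t\sigma_i^2\bigr)$.

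Substituting this into the Markov bound and optimizing over $s>0$ by choosing $s=\lambda/\sum_{i=1}^t\sigma_i^2$ produces exactly the claimed inequality. There is no real obstacle here; the only point worth flagging is that the supermartingale (rather than martingale) hypothesis is compatible with an upper tail bound precisely because we work with $s>0$, so that the inequality $\mathbb{E}[M_t\mid\mathcal{F}_{t-1}]\le M_{t-1}$ goes the right way after exponentiation. A symmetric lower tail would instead require a submartingale hypothesis, which is why only the one-sided statement appears.
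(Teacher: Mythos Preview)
Your proposal is correct and matches the paper's proof essentially step for step: Chernoff bound, use the supermartingale inequality together with the conditional subgaussian hypothesis to obtain $\mathbb{E}[e^{sM_t}\mid\mathcal{F}_{t-1}]\le e^{s^2\sigma_t^2/2}e^{sM_{t-1}}$, iterate via the tower property, and optimize at $s=\lambda/\sum_{i=1}^t\sigma_i^2$. The only cosmetic difference is that the paper writes the factorization as $e^{sM_{t-1}}\cdot e^{s(M_t-M_{t-1})}$ before invoking the supermartingale bound, whereas you factor through $e^{s\mathbb{E}[M_t\mid\mathcal{F}_{t-1}]}$; the two are equivalent.
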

\begin{proof}
Let $s \ge 0$.
Using the supermartingale property of $M_t$ and the fact that $M_t$, conditional on ${\cal F}_{t-1}$, is subgaussian with parameter $\sigma_t^2$, we readily obtain that
$$
    \mathbb{E}\big[e^{s (M_t-M_{t-1})}\mid \mathcal{F}_{t-1}\big]
    \le \mathbb{E}\big[e^{s (M_t-\mathbb{E}[M_{t}\mid \mathcal{F}_{t-1}])}\mid \mathcal{F}_{t-1}\big]
    \le e^{s^2 \sigma_t^2/2}.
$$
Using the tower property of conditional expectation and induction we see that 
\[
    \mathbb{E}\big[e^{s M_t}\big]
    = \mathbb{E}\big[e^{s M_{t-1}}\mathbb{E}\big[e^{s(M_t-M_{t-1})}\mid \mathcal{F}_{t-1}\big]\big]
    \le e^{s^2 \sigma_t^2/2}\mathbb{E}\big[e^{s M_{t-1}}\big]
    \le \exp\left(\frac12\sum_{i=1}^{t}s^2\sigma_i^2\right).
\]
Then, by Markov's inequality, 
\[
    \mathbb{P}(M_t\ge \lambda)
    = \mathbb{P}\big(e^{s M_t}\ge e^{s \lambda}\big)
    \le e^{-s \lambda} \mathbb{E}\big[e^{s M_t}\big]
    \le \exp\left(\frac12\sum_{i=1}^{t}s^2\sigma_i^2 - s \lambda\right).
\]
Choosing $s = \lambda / \sum_{1 \le i \le t}\sigma_i^2 > 0$
completes the proof.
\end{proof}
\noindent
Hoeffding's lemma implies that every bounded random variable is subgaussian and one can deduce the well-known Azuma-Hoeffding inequality, see e.g.~ \cite[Theorem 3.2.1]{roch_mdp_2024}.
\begin{thm}
\label{azumahoeffding}
Let $(M_t)_{t\in \mathbb{N}_0}$ be a supermartingale and let $N\in \mathbb{N}$. Suppose that $|M_i-M_{i-1}|\leq c_i$ for all $1\leq i\leq N$. Then
\[
    \mathbb{P}(M_N-M_0\ge b)\leq \exp\left(-\frac{b^2}{2\sum_{i=1}^{N}c^2_i}\right),
    \quad
    b \ge 0 \enspace .
\]
\end{thm}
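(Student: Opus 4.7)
The plan is to deduce the inequality as a direct consequence of Lemma~\ref{lem:martingale}, which is already set up for supermartingales with subgaussian increments. The only work is to verify that the boundedness hypothesis $|M_i - M_{i-1}| \le c_i$ translates into the required subgaussianity, and this is the content of Hoeffding's lemma.

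First, I would reduce to the zero-starting case by passing to $\tilde M_t \coloneqq M_t - M_0$. This process is again a supermartingale with respect to $(\mathcal{F}_t)_{t\in\mathbb{N}_0}$, it satisfies $\tilde M_0 = 0$, and its increments coincide with those of $M$, so $|\tilde M_i - \tilde M_{i-1}| \le c_i$ for all $1 \le i \le N$. Moreover, $\mathbb{P}(M_N - M_0 \ge b) = \mathbb{P}(\tilde M_N \ge b)$, so it suffices to bound the latter probability.

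Next, I would check the subgaussianity of $\tilde M_t$ conditional on $\mathcal{F}_{t-1}$. Since $\tilde M_{t-1}$ is $\mathcal{F}_{t-1}$-measurable and $|\tilde M_t - \tilde M_{t-1}| \le c_t$, conditionally on $\mathcal{F}_{t-1}$ the variable $\tilde M_t$ takes values in the deterministic interval $[\tilde M_{t-1} - c_t, \tilde M_{t-1} + c_t]$ of length $2c_t$. Hoeffding's lemma, applied to the conditional distribution, yields
\[
    \mathbb{E}\bigl[e^{s(\tilde M_t - \mathbb{E}[\tilde M_t \mid \mathcal{F}_{t-1}])} \bigm| \mathcal{F}_{t-1}\bigr] \le e^{s^2 c_t^2 / 2}, \quad s \in \mathbb{R},
\]
so that $\tilde M_t$ conditional on $\mathcal{F}_{t-1}$ is subgaussian with parameter $\sigma_t^2 = c_t^2$. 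Plugging this into Lemma~\ref{lem:martingale} with $t = N$ delivers exactly the claimed tail bound.

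There is no real obstacle here; the argument is a two-line invocation of an already proved lemma plus Hoeffding's inequality for bounded random variables, and this is precisely why the paragraph preceding the theorem foreshadows it as an immediate consequence. The only minor point worth stating carefully is that Hoeffding's lemma is used in its conditional form, which is standard and follows from the unconditional version applied pathwise to the regular conditional distribution.
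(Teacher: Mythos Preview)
Your proposal is correct and matches the paper's own treatment: the paper does not give a formal proof of Theorem~\ref{azumahoeffding} but states in the preceding sentence that it follows from Hoeffding's lemma (bounded variables are subgaussian) together with the supermartingale concentration of Lemma~\ref{lem:martingale}, which is precisely the deduction you carry out.
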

\noindent


\section{Drift and Variation of the Unhappy Particles}
\label{sec:driftvar}

In this section we study the \textit{drift} and the \emph{variation} of the process of unhappy particles; more precisely, we determine exact and asymptotic expressions for  the quantities
\[
    \mathbb{E}\big[U_{t+1} - U_t \mid  U_t\big]
    \quad
    \text{and}
    \quad
    \mathbb{E}\big[(U_{t+1} - U_t)^2 \mid  U_t\big].    
\]
We begin as in \cite{DMP23} by writing
\begin{equation}
\label{eq:diffUt}
    U_{t+1} - U_t =  X_{t+1}-Y_{t+1},
    \quad
    \text{where}
    \quad
    X_{t+1}\coloneqq \left|\mathcal{H}_{t} \cap \mathcal{U}_{t+1}\right|,~
    Y_{t+1}\coloneqq \left|\mathcal{U}_{t}\cap \mathcal{H}_{t+1}\right|.
\end{equation}
That is, $X_{t+1}$ stands for the number of particles that were happy at step $t$ but become unhappy in step $t+1$ (because some unhappy particle in $\mathcal{U}_t$ moved onto their vertex) and $Y_{t+1}$ is the number of unhappy particles at time $t$ that become happy at step $t+1$ (because at time $t+1$ they are alone on the vertex that they occupy).
From here on we deviate from~\cite{DMP23}. Define
\[
    X_{t+1,h} \coloneqq \mathbb{1}[h \in \mathcal{U}_{t+1}]
    \quad\text{and}\quad
    Y_{t+1,u} \coloneqq \mathbb{1}[u \in \mathcal{H}_{t+1}]
\]
so that we can write
\begin{equation}\label{sums}
    X_{t+1} = \sum_{h \in \mathcal{H}_t} X_{t+1,h} \enspace
    \quad \text{and} \quad
    Y_{t+1} = \sum_{u \in \mathcal{U}_t} Y_{t+1,u}.
\end{equation}
The following simple lemma, also established in~\cite{DMP23}, determines the drift. We include a (short) proof here that uses our notation and because it is instructive for the computation of the second moment that will follow.
\begin{lem}\label{firstmomentexact}
Let $t\in \mathbb{N}_0$. Then 
\begin{equation*}\label{eq:FIRSTM}
\mathbb{E}\big[U_{t+1} - U_t\mid  U_t\big]
=  H_t\left(1-\left(1-\frac{1}{n}\right)^{U_t}\right) - U_t\frac{n-H_t}{n}\left(1-\frac{1}{n}\right)^{U_t-1}.
\end{equation*}
\end{lem}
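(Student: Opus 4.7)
The plan is to compute each of the two sums in \eqref{sums} via linearity of expectation, treating the happy-to-unhappy transitions (the $X_{t+1,h}$) and the unhappy-to-happy transitions (the $Y_{t+1,u}$) separately. The key structural fact is that, conditional on the configuration at time $t$, the $U_t$ unhappy particles choose their next positions independently and uniformly from $[n]$, while the $H_t$ happy particles remain at (distinct) vertices. Throughout I condition on $\mathcal{U}_t$ (which determines $U_t$ and $H_t = M - U_t$ and the positions of all happy particles).

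For the first sum, fix $h \in \mathcal{H}_t$. The particle $h$ stays at its vertex, call it $v_h$, and becomes unhappy iff at least one of the $U_t$ unhappy particles lands on $v_h$. Since each unhappy particle lands on $v_h$ independently with probability $1/n$, we have
\[
    \mathbb{E}\big[X_{t+1,h}\mid \mathcal{U}_t\big]
    = 1-\left(1-\frac{1}{n}\right)^{U_t}.
\]
Summing over the $H_t$ happy particles gives $\mathbb{E}[X_{t+1}\mid U_t] = H_t\bigl(1-(1-1/n)^{U_t}\bigr)$.

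For the second sum, fix $u \in \mathcal{U}_t$. Particle $u$ becomes happy at step $t+1$ iff its new (uniformly random) vertex $G_{u,t+1}$ is occupied by no other particle at time $t+1$. This requires (i) $G_{u,t+1}$ is not one of the $H_t$ vertices occupied by happy particles, which happens with probability $(n-H_t)/n$, and (ii) conditional on (i), none of the remaining $U_t-1$ unhappy particles lands on the same vertex, which, by independence of the moves, has probability $(1-1/n)^{U_t-1}$. Hence
\[
    \mathbb{E}\big[Y_{t+1,u}\mid \mathcal{U}_t\big]
    = \frac{n-H_t}{n}\left(1-\frac{1}{n}\right)^{U_t-1},
\]
and summing over $u \in \mathcal{U}_t$ yields the second term. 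Subtracting and invoking~\eqref{eq:diffUt} gives the claim. There is no real obstacle here: the only subtlety is to remember that in (ii) above, the conditioning on (i) does not alter the distribution of the other unhappy particles' moves, as all the $G_{i,t+1}$ are mutually independent by definition of the model.
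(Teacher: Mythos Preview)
Your proof is correct and follows essentially the same approach as the paper: compute $\mathbb{E}[X_{t+1,h}\mid U_t]$ and $\mathbb{E}[Y_{t+1,u}\mid U_t]$ exactly as you do, then sum and subtract via \eqref{eq:diffUt} and \eqref{sums}. The paper's argument is slightly more terse but otherwise identical.
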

\begin{proof}
The probability that none of the $U_t$ unhappy particles jump to a specific fixed vertex is $(1-1/n)^{U_t}$. So, an arbitrary happy particle $h \in {\cal H}_t$ is not joined by any unhappy particle (and thus remains happy) with that probability, and we obtain
\begin{equation}
\label{eq:EXh}
    \mathbb{E}\big[X_{t+1,h} \mid  U_t\big]
    = 1-\left(1-\frac{1}{n}\right)^{U_t},
    \quad
    h \in \mathcal{H}_t.
\end{equation}
Similarly, any unhappy particle $u\in {\cal U}_t$ has $n-H_t$ choices for a position $p_{u,t+1}$ that is not occupied by a happy particle, and all other $U_t-1$ unhappy particles will \textit{not} choose $p_{u,t+1}$ to move to with probability $(1-1/n)^{U_t-1}$. Thus 
\begin{equation}
\label{eq:EXh2}
    \mathbb{E}\big[Y_{t+1,u} \mid  U_t\big]
    = \frac{n-H_t}{n}\left(1-\frac{1}{n}\right)^{U_t-1}, \quad
    u \in \mathcal{U}_t.
\end{equation}
The statement follows from \eqref{eq:diffUt}, \eqref{sums} and linearity of expectation.
\end{proof}
We will need later the following simple, non-asymptotic estimate that was shown in~\cite{DMP23} and that follows rather easily from Lemma~\ref{firstmomentexact}. 
\begin{lem}
\label{lem:driftcoarsebounds}
Let $\varepsilon:\mathbb{N}\to [-1,1]$ and $M=M(n) \coloneqq(1+\varepsilon)n/2 \in \mathbb{N}$. Then, for any $t\in\mathbb{N}$,
\[
    (1+\varepsilon) U_t - 7U_t^2/2n \le \E\big[U_{t+1} \mid U_t \big] \le (1+\varepsilon) U_t - U_t^2 / n.
\]
\end{lem}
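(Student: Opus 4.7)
The plan is to start directly from the exact drift formula in Lemma~\ref{firstmomentexact}, substitute $H_t = M - U_t = (1+\varepsilon)n/2 - U_t$ (so that $2H_t - n = \varepsilon n - 2U_t$ and $n - H_t = (1-\varepsilon)n/2 + U_t \le n$ since $U_t\le M\le n$), and then control the two appearances of $(1-1/n)^k$ by the Bernoulli-type inequalities
\[
    1 - \frac{k}{n} \;\le\; \left(1-\frac{1}{n}\right)^{k} \;\le\; 1 - \frac{k}{n} + \binom{k}{2}\frac{1}{n^{2}},\qquad 0\le k\le n,
\]
which follow from the alternating binomial expansion (the magnitudes of successive terms $\binom{k}{j}n^{-j}$ are non-increasing for $k\le n$). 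The two sides of the lemma are then obtained by choosing, at each occurrence, whichever end of this sandwich is needed.

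For the \emph{upper} bound, I would use $1-(1-1/n)^{U_t}\le U_t/n$ on the first summand of Lemma~\ref{firstmomentexact} and $(1-1/n)^{U_t-1}\ge 1-(U_t-1)/n$ on the second. Collecting terms and using $2H_t-n=\varepsilon n - 2U_t$, the main-order contribution becomes $\varepsilon U_t - 2U_t^2/n$, and the only positive error term is $U_t(n-H_t)(U_t-1)/n^2$, which is bounded by $U_t^{2}/n$ via $n-H_t\le n$. Adding this gives the claimed $\varepsilon U_t - U_t^{2}/n$, and hence $\E[U_{t+1}\mid U_t]\le (1+\varepsilon)U_t - U_t^2/n$.

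For the \emph{lower} bound, I would use the refinement $1-(1-1/n)^{U_t}\ge U_t/n - U_t(U_t-1)/(2n^{2})$ on the first summand and the trivial $(1-1/n)^{U_t-1}\le 1$ on the second. The leading term is again $\varepsilon U_t - 2U_t^{2}/n$, and the only negative correction is $-U_t(U_t-1)H_t/(2n^{2})$, which, using $H_t\le n$, is at least $-U_t^{2}/(2n)$. Together this yields $\E[U_{t+1}\mid U_t]\ge (1+\varepsilon)U_t - 5U_t^{2}/(2n)$, which is in fact stronger than the announced $(1+\varepsilon)U_t - 7U_t^{2}/(2n)$, so the stated inequality follows a fortiori.

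There is no real obstacle here; the only care that is needed is bookkeeping with the signs and making sure the Bernoulli inequality one picks at each spot is the one that preserves the direction of the overall inequality. The looseness in the constant $7/2$ (versus the $5/2$ one actually obtains) simply reflects that the statement is written in a convenient form for later use rather than being tight.
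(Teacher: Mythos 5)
Your proof is correct, and it follows exactly the route the paper itself hints at: the paper states that this lemma "follows rather easily from Lemma~\ref{firstmomentexact}" and references~\cite{DMP23} rather than giving a proof, so what you've done is supply the missing elementary argument. The bookkeeping is sound: writing $2H_t - n = \varepsilon n - 2U_t$ isolates the leading term $\varepsilon U_t - 2U_t^2/n$, and the two Bernoulli-type sandwich bounds (both valid here because $U_t \le M \le n$) control the remainders in the needed directions, with $n - H_t \le n$ and $H_t \le n$ closing the estimates. Your lower bound with constant $5/2$ is indeed sharper than the stated $7/2$; the lemma as announced is simply stated with slack, so nothing is lost.
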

\noindent
We proceed by deriving an exact expression for the variation. To this end, we introduce the following quantities:
\[
    A\coloneqq H_t\left(1-(1-1/n)^{U_t}\right)+U_t\frac{n-H_t}{n}(1-1/n)^{U_t-1};
\]
\[B\coloneqq H_t(H_t-1)\left(1-2\left(1-1/n\right)^{U_t}+\left(1-2/n\right)^{U_t}\right);\]
\[C\coloneqq U_t(U_t-1)\frac{n-H_t}{n}\frac{n-H_t-1}{n}(1-2/n)^{U_t-2};\]
\[D\coloneqq 2H_tU_t\frac{n-H_t}{n}(1-1/n)^{U_t-1}\left(1-\big(1-1/(n-1)\big)^{U_t-1}\right).\]
\begin{lem}\label{secmomclosed}
Let $t\in \mathbb{N}_0$. Then 
\begin{equation*}
\label{eq:SECONDM}
    \mathbb{E}\left[(U_{t+1}-U_t)^2\mid U_t\right]=A+B+C-D.
\end{equation*}
\end{lem}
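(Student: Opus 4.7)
The plan is to start from the decomposition $U_{t+1}-U_t = X_{t+1}-Y_{t+1}$ from~\eqref{eq:diffUt} and expand
\[
    (U_{t+1}-U_t)^2 = X_{t+1}^2 + Y_{t+1}^2 - 2X_{t+1}Y_{t+1}.
\]
Using~\eqref{sums} and the fact that $X_{t+1,h}^2 = X_{t+1,h}$ and $Y_{t+1,u}^2 = Y_{t+1,u}$ (both are indicators), I rewrite
\[
    X_{t+1}^2 = \sum_{h \in \mathcal{H}_t} X_{t+1,h} + \sum_{h \neq h' \in \mathcal{H}_t} X_{t+1,h}X_{t+1,h'}, \qquad Y_{t+1}^2 = \sum_{u \in \mathcal{U}_t} Y_{t+1,u} + \sum_{u \neq u' \in \mathcal{U}_t} Y_{t+1,u}Y_{t+1,u'},
\]
so that the diagonal terms add up precisely to $\E[X_{t+1}+Y_{t+1}\mid U_t] = A$ by \eqref{eq:EXh} and \eqref{eq:EXh2}. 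It remains to handle the three off-diagonal pieces.

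For the $B$ contribution, I condition on $U_t$ and note that each happy particle in $\mathcal{H}_t$ occupies a distinct vertex (as it is alone there). For two such vertices $v_h \neq v_{h'}$, the event $\{X_{t+1,h}X_{t+1,h'}=1\}$ is the intersection of "at least one unhappy particle jumps to $v_h$'' and "at least one jumps to $v_{h'}$''. Inclusion--exclusion applied to the complementary events yields
\[
    \mathbb{E}\big[X_{t+1,h}X_{t+1,h'}\mid U_t\big] = 1 - 2(1-1/n)^{U_t} + (1-2/n)^{U_t},
\]
and multiplying by the $H_t(H_t-1)$ ordered pairs recovers $B$.

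For the $C$ contribution, I observe that $\{Y_{t+1,u}Y_{t+1,u'}=1\}$ requires that the two distinct unhappy particles $u,u'$ land on two distinct non-happy vertices (probability $\frac{n-H_t}{n}\cdot\frac{n-H_t-1}{n}$) and that none of the remaining $U_t-2$ unhappy particles lands on either of these two vertices (probability $(1-2/n)^{U_t-2}$). Multiplying by $U_t(U_t-1)$ yields $C$.

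The step I expect to require the most care is the cross term, which produces $-D$. For $h \in \mathcal{H}_t$ and $u \in \mathcal{U}_t$ I argue as follows: on $\{Y_{t+1,u}=1\}$ the particle $u$ must pick one of the $n-H_t$ non-happy vertices (in particular $u$ does not pick $v_h$), and on top of that $\{X_{t+1,h}=1\}$ requires that among the remaining $U_t-1$ unhappy particles at least one lands on $v_h$ while none lands on $u$'s chosen vertex. By inclusion--exclusion on the independent choices of these $U_t-1$ particles,
\[
    \mathbb{E}\big[X_{t+1,h}Y_{t+1,u}\mid U_t\big]
    = \frac{n-H_t}{n}\Big[(1-1/n)^{U_t-1} - (1-2/n)^{U_t-1}\Big].
\]
Summing over the $H_t U_t$ ordered pairs and multiplying by $2$, it remains to verify algebraically that the result coincides with $D$. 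This follows from the identity $(1-1/n)(1-1/(n-1)) = 1-2/n$, which gives $(1-2/n)^{U_t-1} = (1-1/n)^{U_t-1}(1-1/(n-1))^{U_t-1}$, exactly factoring out $(1-1/n)^{U_t-1}$ in the manner displayed in the definition of $D$. Combining $A+B+C-D$ then yields the claim.
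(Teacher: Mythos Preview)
Your proposal is correct and follows essentially the same approach as the paper: the same decomposition into diagonal and off-diagonal sums, and the same computations for $A$, $B$, and $C$. The only difference is cosmetic, in the cross term: the paper conditions on $\{u\in\mathcal{H}_{t+1}\}$ and argues that the remaining $U_t-1$ particles are then uniform on $\{1,\dots,n\}\setminus\{p_{u,t+1}\}$, obtaining $\mathbb{P}(h\in\mathcal{U}_{t+1}\mid u\in\mathcal{H}_{t+1})=1-(1-1/(n-1))^{U_t-1}$ directly in the form of $D$; you instead compute the joint probability via $P(N_w=0)-P(N_w=0,N_h=0)$ and then invoke the identity $(1-1/n)(1-1/(n-1))=1-2/n$ to match $D$. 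Both routes are equivalent and equally short.
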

\begin{proof}
From~\eqref{eq:diffUt} we obtain that $(U_{t+1}-U_t)^2=X^2_{t+1}+Y^2_{t+1}-2X_{t+1}Y_{t+1}$, and so
\begin{equation}
\label{eq:varianceexpanded}
    \mathbb{E}\left[(U_{t+1}-U_t)^2 \mid U_t\right]
    = \mathbb{E}\left[X^2_{t+1} \mid  U_t\right]
        +\mathbb{E}\left[Y^2_{t+1} \mid U_t\right]
        -2\mathbb{E}\left[X_{t+1}Y_{t+1} \mid U_t\right].
\end{equation}
Recalling~\eqref{sums} and that $X_{t+1,h} = \mathbb{1}[h \in \mathcal{U}_{t+1}]$, we obtain
\[
    X^2_{t+1}
    = \sum_{h\in \mathcal{H}_t}X_{t+1,h}
        + \sum_{\substack{h,h'\in \mathcal{H}_t, h\neq h'}} X_{t+1,h}X_{t+1,h'}.
\]
Note that the distribution of $X_{t+1}$, conditional on $U_t$, is invariant under the choice of $\mathcal{H}_t$ and $\mathcal{U}_t$. We will use this fact without further reference within this proof. Then, by linearity of expectation and~\eqref{eq:EXh} we see that
\begin{equation}\label{eq:secX}
    \mathbb{E}\left[X^2_{t+1} \mid U_t\right]
    = H_t\left(1-(1-1/n)\right)^{U_t} + \sum_{\substack{h,h'\in \mathcal{H}_t, h\neq h'}}\mathbb{P}(h,h'\in \mathcal{U}_{t+1} \mid U_t).
\end{equation}
Similarly, we obtain 
\begin{equation*}
    Y^2_{t+1}=\sum_{u\in \mathcal{U}_t}Y_{t+1,u} + \sum_{\substack{u,u'\in \mathcal{U}_t, u\neq u'}} Y_{t+1,u}Y_{t+1,u'},
\end{equation*}
where $Y_{t+1,u} = \mathbb{1}[u \in \mathcal{H}_{t+1}]$, and so, using~\eqref{eq:EXh2}, we obtain 
\begin{equation}\label{eq:secY}
    \mathbb{E}\left[Y^2_{t+1} \mid U_t \right]
    = U_t\frac{n-H_t}{n}(1-1/n)^{U_t-1} + \sum_{\substack{u,u'\in \mathcal{U}_t, u\neq u'}} \mathbb{P}(u,u'\in \mathcal{H}_{t+1} \mid U_t).
\end{equation}
Note that from~\eqref{eq:secX} and \eqref{eq:secY} we already spot the $A$ term in the statement of the lemma.  Next we compute the probabilities that appear in the sums on the right-hand side of \eqref{eq:secX} and \eqref{eq:secY}. To this end, let $h,h'\in \mathcal{H}_t$ with $h\neq h'$ and note that
\begin{equation*}
    \mathbb{P}(h,h'\in \mathcal{U}_{t+1} \mid U_t)
    = 1-\mathbb{P}(h\notin \mathcal{U}_{t+1} \mid U_t)-\mathbb{P}(h'\notin \mathcal{U}_{t+1} \mid U_t)+\mathbb{P}(h,h'\notin \mathcal{U}_{t+1} \mid U_t).
\end{equation*}
We have already argued in Lemma \ref{firstmomentexact} when deriving (\ref{eq:EXh}) that
\[
    \mathbb{P}(h \notin \mathcal{U}_{t+1} \mid U_t)
    = \mathbb{P}(h'\notin \mathcal{U}_{t+1} \mid U_t)
    = (1-1/n)^{U_t}.
\]
Moreover, $h$ and $h'$ both remain happy if all particles in ${\cal U}_t$ jump to vertices different from $p_{h,t}, p_{h',t}$, which occurs with probability $(1-2/n)^{U_t}$. 
So
\begin{equation}
    \sum_{\substack{h,h'\in \mathcal{H}_t, h\neq h'}}\mathbb{P}(h,h'\in \mathcal{U}_{t+1} \mid U_t)
    = H_t(H_t-1)\left(1-2(1-1/n)^{U_t}+(1-2/n)^{U_t}\right),
\end{equation}
which gives the term $B$ in the statement of the lemma.  Next, let $u,u'\in \mathcal{U}_t$ with $u\neq u'$. Then we claim that
\[
    \mathbb{P}(u,u'\in \mathcal{H}_{t+1} \mid U_t)
    = \frac{n-H_t}{n}\frac{n-H_t-1}{n}(1-2/n)^{U_t-2}.
\]
To see this, note that there are $n-H_t$ vertices that are not occupied by happy particles and to which $u$ could jump to in order to become happy. This leaves $n-H_t-1$ spots where $u'$ could jump to become happy. 
Moreover, in order to actually be happy, none of the remaining $U_t-2$  particles in $U_t$ may jump on the vertices where $u$ and $u'$ moved to. Thus
\begin{equation}
    \sum_{\substack{u,u'\in \mathcal{U}_t, u\neq u'}}\mathbb{P}(u,u'\in \mathcal{H}_{t+1} \mid U_t)
    = U_t(U_t-1)\frac{n-H_t}{n}\frac{n-H_t-1}{n}(1-2/n)^{U_t-2},
\end{equation}
which corresponds to the term $C$ in the statement of the lemma.
It remains to evaluate the term $2\mathbb{E}[X_{t+1}Y_{t+1} \mid U_t]$ in~\eqref{eq:varianceexpanded}, which, by linearity of expectation equals 
\[
    2\sum_{\substack{h \in \mathcal{H}_t,  u \in \mathcal{U}_t}} \mathbb{P}(h\in \mathcal{U}_{t+1}, u\in \mathcal{H}_{t+1} \mid U_t).
\]
Observe that, conditional on $u \in {\cal U}_t$ becoming happy at time $t+1$, there remain $U_t-1$ unhappy particles that could make $h \in {\cal H}_t$ unhappy, and they jump independently to uniformly random positions in $\{1, \dots, n\} \setminus \{p_{u,t+1}\}$. Therefore
\[
    \mathbb{P}(h\in \mathcal{U}_{t+1} \mid U_t, u\in \mathcal{H}_{t+1})
    = 1-(1-1/(n-1))^{U_t-1},
    \quad h \in {\cal H}_t, u \in {\cal U}_t.
\]
Hence, by recalling~\eqref{eq:EXh2}, we obtain
\begin{align*}
    \mathbb{P}(h\in \mathcal{U}_{t+1}, u\in \mathcal{H}_{t+1} \mid U_t)
    & = \mathbb{P}(u\in \mathcal{H}_{t+1} \mid U_t)\left(1-(1-1/(n-1))^{U_t-1}\right) \\
    & = \frac{n-H_t}{n}(1-1/n)^{U_t-1}\left(1-(1-1/(n-1))^{U_t-1}\right).
\end{align*}
Summing over all $h\in{\cal H}_t, u\in{\cal U}_t$ yields the term $D$.
\end{proof}
We proceed with establishing sharp asymptotic bounds for the drift and the variation; later, when we apply Theorem~\ref{thm:diffusionapproximation}, we will exploit these estimates when $U_t = \Theta(n^{1/2})$. In what follows we will make use of the following elementary asymptotic estimate, which can readily be established by considering the Taylor series expansion with a remainder term of the function $x \mapsto (1-x)^N$, $N\in \mathbb{N}$, at $x=0$. 
\begin{fact}
Uniformly for any $x\in [0,1]$ and $N\in \mathbb{N}_0$ we have
\begin{equation}\label{asymp1}
    (1-x)^N=1-xN+\binom{N}{2}x^2-\binom{N}{3}x^3+O(N^4x^4),
\end{equation}
as $N\to \infty$.
\end{fact}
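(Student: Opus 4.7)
The claim is the order-3 Taylor expansion of $(1-x)^N$ about $x=0$ together with an explicit remainder bound, so the plan is to apply Taylor's theorem in Lagrange form to the polynomial $f(t) = (1-t)^N$ on $[0,1]$. Since the statement is asymptotic as $N\to\infty$, I can restrict to $N\ge 4$ (the finitely many smaller values of $N$ do not affect an $O$-statement, and in any case for $N\in\{0,1,2,3\}$ the Binomial theorem yields the identity exactly with vanishing remainder, because $\binom{N}{k}=0$ whenever $k>N$). For $N\ge 4$ the derivatives compute cleanly as $f^{(k)}(t)=(-1)^k\frac{N!}{(N-k)!}(1-t)^{N-k}$, which gives $f^{(k)}(0)/k! = (-1)^k\binom{N}{k}$ for $k=0,1,2,3$, matching precisely the first four terms on the right-hand side of \eqref{asymp1}.

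Taylor's theorem then produces a Lagrange remainder of the form $R_3(x) = \frac{f^{(4)}(\xi)}{4!}x^4 = \binom{N}{4}(1-\xi)^{N-4}x^4$ for some $\xi\in(0,x)$. The key estimate is that since $\xi\in[0,1]$ and $N-4\ge 0$ we have $(1-\xi)^{N-4}\in[0,1]$, which yields the clean uniform bound $|R_3(x)|\le \binom{N}{4}\,x^4\le N^4 x^4/24$. This is exactly $O(N^4 x^4)$ with an absolute constant $1/24$ that does not depend on $x$, so the estimate is uniform on $x\in[0,1]$ as claimed.

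There is essentially no obstacle here, as the argument is a direct application of Taylor's theorem; the only points to verify are that $(1-\xi)^{N-4}\le 1$ uses $\xi\ge 0$, and that the constant in the $O$ comes from an absolute bound on $|f^{(4)}|$ over $[0,1]$ and is therefore independent of $x$. An alternative, slightly clumsier route would be to apply the Binomial theorem directly and write the remainder as $\sum_{k=4}^N\binom{N}{k}(-x)^k$, then split into the regimes $Nx\le 1$ (where $\sum_{k\ge 4}(Nx)^k/k!\le e(Nx)^4/24$) and $Nx\ge 1$ (where each of $1,Nx,\binom{N}{2}x^2,\binom{N}{3}x^3$ is dominated by $N^4 x^4$); however, the Taylor approach is more economical and is the one I would write up.
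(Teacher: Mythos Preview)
Your proof is correct and follows exactly the approach indicated in the paper, namely Taylor's theorem with remainder applied to $t\mapsto(1-t)^N$ at $t=0$. The handling of small $N$ and the uniform bound $(1-\xi)^{N-4}\le 1$ on $[0,1]$ are the only points that needed checking, and you addressed both.
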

We proceed with deriving an asymptotic expression for the drift. Note that in the next two lemmas we work under the general assumption that the number of particles $M=(1+\varepsilon)n/2$, with $\varepsilon=o(1)$; later we will set $\varepsilon = 2\alpha/n^{1/2} + o(1/n^{1/2})$ so that $M=n/2+\alpha n^{1/2} + o(n^{1/2})$, which is the main focus of the present paper.
\begin{lem}
\label{lem:exponestepchange}
Let $\varepsilon  =\varepsilon(n) = o(1)$, $u:\mathbb{N}\to \mathbb{N}$ and $M = M(n) \coloneqq (1+\varepsilon)n/2 \in \mathbb{N}$. Then, uniformly,
\[
    \E\big[U_{t+1} - U_t \mid U_t=u \big]
    = \varepsilon u-\frac{u^2}{n}\left(\frac74 + \frac{3\varepsilon}4\right) + O\left(\frac{u}{n}+\frac{u^3}{n^2}\right).
\]
\end{lem}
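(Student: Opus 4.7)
The plan is to start from the exact expression for the drift provided by Lemma~\ref{firstmomentexact}, substitute the identities $H_t = M - U_t$ and $M = (1+\varepsilon)n/2$, and then expand the two factors $(1-1/n)^u$ and $(1-1/n)^{u-1}$ via the asymptotic fact~\eqref{asymp1}. The point is that both terms contributing to $\E[U_{t+1}-U_t\mid U_t=u]$ have order of magnitude $u$, but their linear-in-$u$ parts almost cancel, leaving only the $\varepsilon u$ drift.

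Concretely, writing the exact expression as
\[
    D(u) = H_t\bigl(1-(1-1/n)^{u}\bigr) - u\,\tfrac{n-H_t}{n}(1-1/n)^{u-1}
\]
with $H_t = (1+\varepsilon)n/2 - u$ and $(n-H_t)/n = (1-\varepsilon)/2 + u/n$, I would apply \eqref{asymp1} with $N\in\{u,u-1\}$ and $x=1/n$, keeping the expansions up to the cubic term. This gives
\[
    1-(1-1/n)^u = \frac{u}{n} - \frac{u(u-1)}{2n^{2}} + \frac{u(u-1)(u-2)}{6n^{3}} + O\!\left(\tfrac{u^{4}}{n^{4}}\right),
\]
and an analogous expansion for $(1-1/n)^{u-1}$. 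I then multiply out carefully and group terms by the monomial type $u^a \varepsilon^b / n^c$.

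The leading $u/2$ contributions of the two pieces are $(1+\varepsilon)u/2$ and $(1-\varepsilon)u/2$ with opposite signs, so they combine to exactly $\varepsilon u$. The next-order $u^{2}/n$ coefficients coming from $H_t\cdot\bigl(u/n-u(u-1)/(2n^{2})\bigr)$ and from $u\cdot(n-H_t)/n\cdot\bigl(1-(u-1)/n\bigr)$ add up to $-\bigl(\tfrac74+\tfrac{3\varepsilon}4\bigr)u^{2}/n$, which is precisely the quadratic term appearing in the statement. All remaining contributions are either of order $u/n$ (coming from the lower-order correction to $u(u-1)$ versus $u^{2}$, where the $\varepsilon$-dependent prefactors $1\pm\varepsilon$ produce an $O(u/n)$ residue), of order $u^{3}/n^{2}$ (from the cubic term of \eqref{asymp1}), or strictly smaller; in particular $u^{2}/n^{2}=O(u/n)$ and $u^{4}/n^{3}=O(u^{3}/n^{2})$ whenever $u\le n$, so the total error is indeed $O(u/n + u^{3}/n^{2})$ uniformly.

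The main obstacle is purely bookkeeping: keeping track of the signs and coefficients so that the $\varepsilon$-dependent quadratic term comes out to $-3\varepsilon/4$ rather than something else, and making sure that the error estimate is uniform in $u$ (not requiring $u = O(n^{1/2})$ as will be used later). There is no conceptual difficulty beyond the cancellation of the $u/2$ terms, which is the content of Lemma~\ref{lem:driftcoarsebounds} at a cruder level; what is new here is that we push the expansion one order further to isolate the precise $-7/4$ coefficient that will determine the drift of the limiting logistic Feller diffusion.
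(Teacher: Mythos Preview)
Your proposal is correct and follows essentially the same approach as the paper: start from the exact drift formula in Lemma~\ref{firstmomentexact}, substitute $H_t=(1+\varepsilon)n/2-u$, expand $(1-1/n)^u$ and $(1-1/n)^{u-1}$ via~\eqref{asymp1}, and collect terms. The only cosmetic difference is that the paper keeps slightly fewer terms in the intermediate expansions (quadratic in the first factor, linear in the second) before cancelling, whereas you carry the cubic term explicitly; both lead to the same $O(u/n+u^3/n^2)$ error.
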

\begin{proof}
Recall that $M-U_t=H_t$. Since $M=(1+\varepsilon){n}/{2}$, it follows from Lemma \ref{firstmomentexact} that 
\begin{multline}
\label{befapproximation}
    \mathbb{E}\left[U_{t+1} \mid U_t = u\right]
    \\=
    (1+\varepsilon)\frac{n}{2}\left(1-\Big(1-\frac1n\Big)^u\right)
    + u\left(\Big(1-\frac1n\Big)^u - \Big(\frac{1-\varepsilon}2+\frac{u}n\Big)\Big(1-\frac1n\Big)^{u-1}\right).
\end{multline}
We use~\eqref{asymp1} to obtain the uniform bounds 
\begin{equation}\label{firstapprox}
    (1-1/n)^u = 1-\frac{u}{n}+\frac{u^2}{2n^2}+O\left(\frac{u}{n^2}+\frac{u^3}{n^3}\right),
    \quad
    (1-1/n)^{u-1}=1-\frac{u-1}{n}+O\left(\frac{u^2}{n^2}\right).
\end{equation}
By approximating the terms in \eqref{befapproximation} with (\ref{firstapprox}), where only a second order estimate is used for the second $(1-n^{-1})^n$ term, and using that $\varepsilon$ is bounded, we obtain, maintaining the order of the terms within \eqref{befapproximation},
\begin{align*}
    \mathbb{E}[U_{t+1} \mid U_t = u]
    & = 
    (1 +\varepsilon)\frac{n}2\left(\frac{u}n-\frac{u^2}{2n^2}+O\Big(\frac{u}{n^2}+\frac{u^3}{n^3}\Big)\right) \\ 
    & \qquad + u\Bigg(1-  \frac{u}n+O\Big(\frac{u^2}{n^2}\Big) - \Big(\frac{1-\varepsilon}2 + \frac{u}n\Big)\Big(1-\frac{u-1}n+O\Big(\frac{u^2}{n^2}\Big)\Big) \Bigg).
\end{align*}
Collecting and cancelling terms then yields the statement.
\end{proof}
The next lemma establishes an asymptotic expression for the variation when~$U_t$ is not too big.
\begin{lem}\label{asymptsecoment}
Let $\varepsilon = \varepsilon(n) = o(1)$ and $u:\mathbb{N}\to \mathbb{N}$ be such that $u =o(n^{2/3})$ and $M=M(n) \coloneqq (1+\varepsilon)n/2 \in \mathbb{N}$. Then, uniformly,
\[
    \E\big[(U_{t+1} - U_t)^2 \mid U_t=u \big] = u+o(\varepsilon u^2+u). 
\]
\end{lem}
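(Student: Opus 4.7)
The plan is to start from the exact formula $\mathbb{E}[(U_{t+1}-U_t)^2 \mid U_t = u] = A + B + C - D$ of Lemma~\ref{secmomclosed} and Taylor expand each of the four terms via~\eqref{asymp1}. After substituting $H_t = (1+\varepsilon)n/2 - u$ and $n - H_t = (1-\varepsilon)n/2 + u$, one expands $(1-1/n)^u$, $(1-2/n)^u$, $(1-1/n)^{u-1}$, $(1-2/n)^{u-2}$ and $(1-1/(n-1))^{u-1}$ to three orders each; for the last one it is convenient to first write $1/(n-1) = 1/n + 1/n^2 + O(1/n^3)$ and keep terms up to $1/n^2$.

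A direct computation of $A$ proceeds essentially as in the proof of Lemma~\ref{lem:exponestepchange} and yields $A = u + o(\varepsilon u^2 + u)$. The bulk of the work is then to show $B + C - D = o(\varepsilon u^2 + u)$. The leading-order expansions read
\[
    B = \frac{(1+\varepsilon)^2}{4}(u^2 - u) + \ldots, \quad C = \frac{(1-\varepsilon)^2}{4}(u^2 - u) + \ldots, \quad D = \frac{1 - \varepsilon^2}{2}(u^2 - u) + \ldots,
\]
and a direct check shows that the $u^2$, $\varepsilon u^2$, $u$ and $\varepsilon u$ contributions cancel exactly in $B+C-D$. At the next order, the $u^3/n$ coefficients in $B$, $C$ and $-D$ work out to $-5/4$, $+1/2$ and $+3/4$ respectively, and their sum vanishes. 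This cancellation is crucial, since $u$ may be as large as $n^{2/3-o(1)}$, so an uncancelled $u^3/n$ residue would be of order $n^{1-o(1)}$ and could not be absorbed by $o(\varepsilon u^2 + u)$.

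All remaining residues are of orders $u^2/n$, $u^3/n^2$, $u^4/n^2$, $\varepsilon u^3/n$ and $\varepsilon^2 u^2$, each of which is $o(u) + o(\varepsilon u^2)$ under $u = o(n^{2/3})$ and $\varepsilon = o(1)$. The main obstacle is simply the bookkeeping for the $u^3/n$ cancellation: the corresponding contribution to $D$ comes from multiplying the leading term $(1-\varepsilon^2)nu/2$ in the factor $2H_t U_t(n-H_t)/n$ by the $1/n^2$ correction in the expansion of $(1-1/n)^{u-1}\bigl(1-(1-1/(n-1))^{u-1}\bigr)$, and one must expand $1/(n-1)$ carefully to capture it correctly. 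Once these cancellations are verified, adding $A$ to $B+C-D$ yields the claim.
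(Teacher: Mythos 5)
Your proposal is correct and takes essentially the same approach as the paper: starting from the exact identity $A+B+C-D$ of Lemma~\ref{secmomclosed}, Taylor-expanding each factor via~\eqref{asymp1}, and verifying that the $u^2$, $u$, $\varepsilon u^2$, $\varepsilon u$ and (the $\varepsilon$-independent part of the) $u^3/n$ contributions all cancel in $B+C-D$, with the remaining $\varepsilon^2 u^2$ and $\varepsilon u^3/n$ pieces absorbed into $o(\varepsilon u^2 + u)$. Your coefficients $-5/4$, $+1/2$, $+3/4$ for the $u^3/n$ terms in $B$, $C$, $-D$ match the paper's $\varepsilon=0$ values, and your explanation of where the $D$-contribution arises (the leading part of $2H_tU_t(n-H_t)/n$ times the $1/n^2$ correction in the exponential factors) matches the paper's factorization $D=D_1 D_2 (1-1/n)^{u-1}$.
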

\begin{proof}
Recall from Lemma~\ref{secmomclosed} that
\[\mathbb{E}\left[(U_{t+1}-U_t)^2\mid U_t\right]=A+B+C-D,\]
where the terms $A,B,C,D$ are defined just prior to Lemma~\ref{secmomclosed}. We estimate these terms individually, under the assumption that $U_t=u = o(n^{2/3})$. We start with $A$ which, using $H_t=M-U_t=M-u$, can be rewritten as
\begin{equation*}\label{exprA}
    A=(M-u)\left(1-(1-1/n)^{u}\right)+u(n-(M-u))n^{-1}(1-1/n)^{u-1}.
\end{equation*}
Using~\eqref{firstapprox} and recalling that  $M = (1+\varepsilon)n/2$ we obtain 
\begin{equation*}
    1-(1-1/n)^{u} = \frac{u}{n} + O\Big(\frac{u^2}{n^2}\Big),
    ~
    (1-1/n)^{u-1} = 1-\frac{u-1}{n} + O\Big(\frac{u^2}{n^2}\Big)    
\end{equation*}
and
\[u\frac{n-(M-u)}{n}
    = u\frac{1-\varepsilon}{2} + \frac{u^2}n.
\]
Note that, since $u=o(n)$,
\[
    (M-u)\left(1-(1-1/n)^{u}\right)=\frac{1+\varepsilon}{2}u+o(u),
    \quad
    u(n-(M-u))n^{-1}(1-1/n)^{u-1}=\frac{1-\varepsilon}{2}u+o(u);
\]
whence we obtain
\begin{equation}
\label{A}
    A=u+o(u).
\end{equation}
We proceed by evaluating $B$ which, as $U_t=u$, is given by
\[B= (M-u)((M-u)-1)\left(1-2\left(1-1/n\right)^{u}+\left(1-2/n\right)^{u}\right).\]
Let us start by making a useful observation. Note that, by applying~\eqref{asymp1} to the expressions $(1-x)^N$ and $(1-2x)^N$ for $x\in [0,1/2]$ and $N\in \mathbb{N}_0$ we obtain uniformly
\begin{equation}
\label{diffexp}
    1-2(1-x)^N+(1-2x)^N
    = N(N-1)x^2-N(N-1)(N-2)x^3+O(N^4x^4).
\end{equation}
Consequently, using (\ref{diffexp}) we obtain
\begin{equation}\label{expansionbrack}
    1-2\left(1-1/n\right)^{u}+\left(1-2/n\right)^{u}=\frac{u(u-1)}{n^2}-\frac{u^3}{n^3}+O\left(\frac{u^2}{n^3}+\frac{u^4}{n^4}\right).
\end{equation}
Also, using once more the fact that $M = (1+\varepsilon)n/2$ we see that
\[
    (M-u)((M-u)-1)
    =\left(\frac{1+\varepsilon}{2}n-u\right)^2-\frac{1+\varepsilon}{2}n+u
    =\frac{(1+\varepsilon)^2}{4}n^2-un(1+\varepsilon)+ O(u^2 + n).
\]
Then, recalling our assumption $u = o(n^{2/3})$, after multiplying the expression that we have just derived for $(M-u)((M-u)-1)$ with the expression on the right-hand side of~\eqref{expansionbrack} and truncating the computations at terms that are $o(u)$ we obtain
\begin{equation}\label{B}
    B=\frac{(1+\varepsilon)^2}{4}u(u-1)-\frac{u^3}{n}\left(\frac{5}{4}+\frac{3\varepsilon}{2}+\frac{\varepsilon^2}{4}\right)+o(u).
\end{equation}
Next we consider the terms $C$ and $D$. Recall that, since $U_t=u$, 
\[C= u(u-1)\frac{n-(M-u)}{n}\frac{n-(M-u)-1}{n}(1-2/n)^{u-2}.\]
Since $M=n(1+\varepsilon)/2$ we can write
\[\frac{n-(M-u)}{n}\frac{n-(M-u)-1}{n}=\frac{(1-\varepsilon)^2}{4}+\frac{u}{n}(1-\varepsilon)+O\left(\frac{u^2}{n^2}+ \frac{1}{n}\right).\]
Then, since $u=o(n^{2/3})$, truncating at $o(u)$ terms we obtain
\begin{equation*}
    u(u-1)\frac{n-(M-u)}{n}\frac{n-(M-u)-1}{n}=u(u-1)\frac{(1-\varepsilon)^2}{4}+\frac{u^3}{n}(1-\varepsilon)+o(u).
\end{equation*}
Moreover, using \eqref{asymp1} we see that
\begin{equation*}
    (1-2/n)^{u-2}=1-\frac{2u}{n}+O\left(\frac{u^2}{n^2}+ \frac{1}{n}\right).
\end{equation*}
By multiplying the last two expressions and using $u = o(n^{2/3})$ we arrive at
\begin{equation}\label{C}
    C=u(u-1)\frac{(1-\varepsilon)^2}{4}+\frac{u^3}{n}\frac{1-\varepsilon^2}{2}+o(u).
\end{equation}
Finally we evaluate the term $D$ which, as $U_t=u$, can be written as
\[D= \left(1-(1-1/(n-1))^{u-1}\right)2(M-u)u\frac{n-(M-u)}{n}(1-1/n)^{u-1}= D_1D_2(1-1/n)^{u-1},\]
where we set
\[D_1\coloneqq 1-(1-1/(n-1))^{u-1}\text{ and } D_2\coloneqq 2(M-u)u\frac{n-(M-u)}{n}.\]
Since $M=(1+\varepsilon)n/2$, we can write
\begin{equation}\label{factD}
D_2=\left(n(1+\varepsilon)-2u\right)u\left(\frac{1-\varepsilon}{2}+\frac{u}{n}\right)=\frac{u n}{2}(1-\varepsilon^2)+2\varepsilon u^2-\frac{2u^3}n.
\end{equation}
Moreover, again using~\eqref{asymp1}, we claim that 
\[D_1=\frac{u-1}{n} - \frac{u^2}{2n^2}+O\left(\frac{u^3}{n^3}+\frac{u}{n^2}\right).\]
To see this, let us start by noticing that, thanks to~\eqref{asymp1}, we can write
\[
    D_1 = \frac{u-1}{n-1}-\frac{u^2}{2(n-1)^2}+O\left(\frac{u^3}{n^3}+\frac{u}{n^2}\right).
\]
Observing that $n/(n-1)=1+1/n+O(1/n^2)$ then establishes the claim about $D_1$.
Analogous computations show that the remaining $(1-1/n)^{u-1}$ term that appears in the expression for $D$ satisfies 
\[
    (1-1/n)^{u-1}
    = 1-\frac{u-1}n + O\left(\frac{u^2}{n^2}\right)
\]
and hence after a little algebra we arrive at 
\begin{equation*}
    D_1(1-1/n)^{u-1}=\frac{u-1}{n}-\frac{3u^2}{2n^2}+O\left(\frac{u}{n^2}+\frac{u^3}{n^3}\right).
\end{equation*}
Multiplying the last expression with \eqref{factD} and using that $u = o(n^{2/3})$ we obtain
\begin{equation*}\label{D}
    D=u(u-1)\frac{1-\varepsilon^2}{2}-\frac{u^3}{n}\left(\frac{3}{4}(1-\varepsilon^2)-2\varepsilon\right)+o(u).
\end{equation*}
Consequently, by combining this bound with~\eqref{C} we obtain
\begin{align*}\label{C-D}
    C-D&=u(u-1)\left(\frac{(1-\varepsilon)^2}{4}-\frac{1-\varepsilon^2}{2}\right)+\frac{u^3}{n}\left(\frac{1-\varepsilon^2}{2}+\frac{3}{4}(1-\varepsilon^2)-2\varepsilon\right)+o(u)\\
    &=-u(u-1)\left(\frac{1}{4}+\frac{\varepsilon}{2}-\frac{3\varepsilon^2}{4}\right)+\frac{u^3}{n}\left(\frac{5}{4}(1-\varepsilon^2)-2\varepsilon\right)+o(u).
\end{align*}
Summing $A$ and $B$ as given in (\ref{A}) and (\ref{B}), respectively, we obtain
\begin{align*}
    A+B+C-D&=u+\frac{(1+\varepsilon)^2}{4}u(u-1)-\frac{u^3}{n}\left(\frac{5}{4}
    +\frac{3\varepsilon}{2}+\frac{\varepsilon^2}{4}\right)\\
    &-u(u-1)\left(\frac{1}{4}+\frac{\varepsilon}{2}-\frac{3\varepsilon^2}{4}\right)+\frac{u^3}{n}\left(\frac{5}{4}(1-\varepsilon^2)-2\varepsilon\right)+o(u)\\
    &=u(1-\varepsilon^2)+u^2\varepsilon^2-\frac{u^3}{n}\left(\frac{5}{4}+\frac{3\varepsilon}{2}+\frac{\varepsilon^2}{4}-\frac{5}{4}(1-\varepsilon^2)+2\varepsilon\right)+o(u)\\
    &=u-u\varepsilon^2+u^2\varepsilon^2-\frac{u^3}{n}\left(\frac{7\varepsilon}{2}+\frac{3}{2}\varepsilon^2\right)+o(u).
\end{align*}
But 
$u^3\varepsilon/n,u^2\varepsilon^2=o(\varepsilon u^2)$
and so the proof is finished. 
\end{proof}


\section{The Early Steps}
\label{sec:earlySteps}

In this section we provide, roughly speaking, tight bounds for the typical trajectory of the process $(U_t)_{t \in  \mathbb{N}_0}$ of unhappy particles until the first step where $U_t$ drops to a value below $n^{1/2}/\delta$, where $\delta > 0$ is (arbitrarily) small. The main result of this section is the following statement. 
\begin{lem}
\label{lem:earlyStepsSummary}
There is a $\chi\in\mathbb{R}$ such that the following is true.
Let $\epsilon > 0, \alpha \in \mathbb{R}$, $M = n/2 + \alpha n^{1/2} + o(n^{1/2}) \in \mathbb{N}$, and let for $\delta > 0$
\[
    T_{n, M,\delta} \coloneqq \inf \left\{t > 0: U_t \le n^{1/2} / \delta\right\}.
\]
Then, for all sufficiently small $\delta > 0$ and all sufficiently large $n$, with probability at least $1-\delta$,
\[U_{T_{n,M,\delta}}\sim n^{1/2}/\delta, \quad
    T_{n,M,\delta} = (1\pm \epsilon) \frac{4}{7} \delta n^{1/2},
    \quad\text{and}\quad 
    \sum_{0 \le t \le T_{n,M,\delta}} U_t = \frac{2}{7}n \ln n + \frac{4}{7}n \ln{\delta} + \chi n \pm \epsilon n.    
\]
\end{lem}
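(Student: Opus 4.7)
The plan is to introduce a potential function whose one-step conditional drift equals $U_t$ to leading order, thereby reducing the evaluation of $\sum_t U_t$ to a single telescoping sum plus lower-order corrections. The right choice is $L_t := -(4n/7)\ln U_t$: combining the drift and variation formulas (Lemmas~\ref{lem:exponestepchange} and~\ref{asymptsecoment}) with the Taylor expansion $\ln(1+x) = x - x^2/2 + O(x^3)$ produces, in the ``good range'' $U_t \in [n^{1/2}/(2\delta),\, n/2+1]$,
\[
\E[\Delta L_t \mid \mathcal{F}_t] = U_t + \frac{2n}{7 U_t} - \frac{4n\varepsilon}{7} + o(1),
\qquad
\mathrm{Var}[\Delta L_t \mid \mathcal{F}_t] = O\!\left(\frac{n^2}{U_t}\right),
\]
where $\varepsilon = 2\alpha n^{-1/2} + o(n^{-1/2})$.

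Next, I would study the martingale $N_t := L_t - L_0 - \sum_{s<t}\E[\Delta L_s \mid \mathcal{F}_s]$. Since $\Delta U_t$ is a $2$-Lipschitz function of the independent uniform vertex choices $\{G_{u,t+1}\}_{u\in \mathcal{U}_t}$ at step $t$, the bounded-differences inequality implies that its centered version is subgaussian with parameter $O(U_t)$; propagating this through the linearization $\Delta L_t \approx -(4n/7)\Delta U_t/U_t$ and invoking Theorem~\ref{subG} yields subgaussian increments for $N_t$ with parameter $O(n^2/U_t)$. Lemma~\ref{lem:martingale} then gives $|N_{T_{n,M,\delta}}| = O(\delta n\sqrt{\log(1/\delta)})$ with probability at least $1-\delta$, which is $o(n)$ for $\delta$ small. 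A parallel treatment of the reciprocal $V_t := n/U_t$---whose drift is the constant $7/4 + o(1)$ throughout the good range---shows that $V_t$ concentrates around $V_0 + (7/4)t$ with fluctuations of order $o(n^{1/2})$. This simultaneously yields the estimate $T_{n,M,\delta} = (1 \pm \epsilon)(4\delta/7) n^{1/2}$ (by solving $V_{T_{n,M,\delta}} = \delta n^{1/2}$), the a~priori control needed to confine $U_t$ to the good range via a standard stopping-time and union-bound argument, and $U_{T_{n,M,\delta}} \sim n^{1/2}/\delta$ (since the typical jump $|\Delta U_t|$ at the stopping time is of order $\sqrt{U_t} = o(U_t)$, ruling out significant overshoot).

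Rearranging the definition of $N_t$ at $t = T_{n,M,\delta}$ gives
\[
\sum_{t<T_{n,M,\delta}} U_t = (L_{T_{n,M,\delta}} - L_0) + \frac{4n\varepsilon}{7}\,T_{n,M,\delta} - \frac{2n}{7}\sum_{t<T_{n,M,\delta}} \frac{1}{U_t} - N_{T_{n,M,\delta}} + o(n).
\]
Using $U_0 \sim n/2$ and $U_{T_{n,M,\delta}} \sim n^{1/2}/\delta$, the leading term evaluates to
\[
(4n/7)\ln(U_0/U_{T_{n,M,\delta}}) = (2n/7)\ln n + (4n/7)\ln \delta - (4n\ln 2)/7 + o(n),
\]
while the remaining three explicit terms contribute $O(|\alpha|\delta n)$, $O(\delta^2 n)$ and $O(\delta n \sqrt{\log(1/\delta)})$, respectively, each bounded by $\epsilon n/3$ for all sufficiently small $\delta$. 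Setting $\chi := -4\ln 2/7$ then yields the claimed formula. The principal obstacle is the bookkeeping: one must ensure that all of these lower-order corrections collectively sum to $o(n)$ over the $\Theta(n^{1/2})$ steps. The choice of $L_t$ is essential here because it exactly cancels the otherwise-dominant quadratic drift $-7U_t^2/(4n)$ of $U_t$, leaving only second-order corrections (polynomial in $\delta$) and $\alpha$-linear corrections (linear in $\delta$) that all vanish in the $\delta \to 0$ limit.
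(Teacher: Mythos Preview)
Your potential-function idea via $L_t = -(4n/7)\ln U_t$ and $V_t = n/U_t$ is an elegant alternative to the paper's sequence of checkpoints, and it would indeed handle the descent from, say, $n^{4/5}$ down to $n^{1/2}/\delta$ more cleanly than Lemmas~\ref{lem:martingalepair}--\ref{lem:si}. However, there is a genuine gap in the very early steps, and it produces the wrong constant~$\chi$.

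Your drift formula $\E[\Delta L_t \mid \mathcal{F}_t] = U_t + 2n/(7U_t) - 4n\varepsilon/7 + o(1)$ relies on (i) Lemma~\ref{lem:exponestepchange}, whose error $O(u^3/n^2)$ is $\Theta(n)$ once $u=\Theta(n)$; (ii) Lemma~\ref{asymptsecoment}, which explicitly requires $u=o(n^{2/3})$; and (iii) the second-order Taylor expansion of $\ln(1+\Delta U_t/U_t)$, which is not justified when $\Delta U_t/U_t$ is bounded away from~$0$. All three fail in the first handful of steps. At $t=0$, for instance, $U_1$ concentrates around $nf(1/2)$ with $f$ as in Lemma~\ref{lem:veryEarly}, so
\[
\E[\Delta L_0]\;\approx\;-\tfrac{4n}{7}\ln\!\bigl(2f(1/2)\bigr)\;=\;-\tfrac{4n}{7}\ln\bigl(1-e^{-1/2}\bigr)\;\approx\;0.533\,n,
\]
whereas your formula predicts $U_0+O(n^{1/2})\approx 0.5\,n$. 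These $\Theta(n)$ discrepancies over the first few steps sum to a nonzero multiple of $n$; hence your telescoped identity does not actually hold with the claimed $o(n)$ remainder, and your value $\chi=-(4/7)\ln 2\approx -0.396$ is incorrect. The paper's $\chi\approx -0.124$ (Remark~\ref{rem:chi}) is determined by the \emph{full} nonlinear map $f$ via $\sum_{t\le N}f^{(t)}(1/2)=(4/7)\ln N+\chi+o(1)$ (Lemma~\ref{lem:iterMean}), not by its quadratic Taylor polynomial. The same objection applies to the claim that $V_t$ has drift $7/4+o(1)$ throughout your ``good range'' (e.g.\ $\Delta V_0\approx 1/f(1/2)-2\approx 3.08$), though there it is harmless for the leading order of $T_{n,M,\delta}$ since only $O(n^{1/5})$ steps are affected. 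The paper resolves all this by splitting at $T_0=\inf\{t:U_t<u_0\}$ with $u_0\approx n^{4/5}$: the phase $t\le T_0$ is treated deterministically via the iterates of $f$ (Lemmas~\ref{lem:veryEarly}--\ref{lem:iterMean}), and this is where $\chi$ originates; your $L_t,V_t$ machinery could then replace the martingale analysis of the phase $T_0<t\le T_{n,M,\delta}$, but not the first phase.
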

\noindent
In particular, (roughly) $\frac47\delta n^{1/2}$ steps are required to drop below $n^{1/2}/\delta$ unhappy particles, and at this step the accumulated number of unhappy particles, which corresponds to the total number of jumps, is (roughly) $\frac27n\ln n$.

The rest of this section is devoted to the proof of Lemma~\ref{lem:earlyStepsSummary}. As a preparation we first establish that the (conditional) distribution of the number $U_{t+1}$ of unhappy particles, given $U_t$, is subgaussian. This simple fact will be useful in several occasions, and it will allow us to apply the machinery from Section~\ref{ssec:preliminariesmartingales}.
\begin{lem}
\label{lem:subgaussian}
Let $M = M(n) \in \mathbb{N}$. Then, conditional on $U_t$, the random variable $U_{t+1}$ is subgaussian with parameter $64U_{t}$. In particular,
\[
    \mathbb{P}\big[U_{t+1} - \mathbb{E}[U_{t+1}\mid U_t] \ge  \lambda \mid U_t\big],
    ~~\mathbb{P}\big[U_{t+1} - \mathbb{E}[U_{t+1}\mid U_t] \le  -\lambda \mid U_t\big]    
    \le
    e^{-\lambda^2/8U_t},
    \quad
    \lambda > 0.
\]
\end{lem}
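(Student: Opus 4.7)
The plan is to condition on $\mathcal{F}_t$, so that $U_t$, $H_t$, and the positions of the happy particles at step $t$ are determined, and to view $U_{t+1}$ as a deterministic function $f$ of the $U_t$ independent uniform random variables $G_1,\dots,G_{U_t}\in\{1,\dots,n\}$ that encode the destinations of the currently unhappy particles. With this setup I will derive the claimed tail bound via the Azuma--Hoeffding inequality applied to the Doob martingale of $f$, and then upgrade it to the subgaussian property via Theorem~\ref{subG}.

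The central combinatorial input is a bounded-differences estimate for $f$: changing a single coordinate alters $f$ by at most $2$. Suppose we replace $G_i=v$ by $G_i=v'\neq v$ while keeping the other coordinates fixed. This modification changes only the total particle counts $N_v$ (decreasing by $1$) and $N_{v'}$ (increasing by $1$), where $N_w$ denotes the number of particles (happy plus unhappy) at vertex $w$ at the end of step $t+1$. Since the contribution of vertex $w$ to $U_{t+1}$ equals $N_w\cdot \mathbb{1}[N_w\ge 2]$, a short case analysis on the values of $N_v$ and $N_{v'}$ before the swap shows that the contribution at $v$ decreases by $0$, $1$, or $2$ (the worst case being $N_v=2\to 1$, which removes two unhappy particles from $v$), while the contribution at $v'$ increases by $0$, $1$, or $2$ (the worst case being $N_{v'}=1\to 2$, which creates two unhappy particles at $v'$). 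Hence the net change in $U_{t+1}$ lies in $[-2,2]$.

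Given this estimate, I consider the Doob martingale $M_k\coloneqq \mathbb{E}[U_{t+1}\mid \mathcal{F}_t, G_1,\dots,G_k]$ for $0\le k\le U_t$, so that $M_0=\mathbb{E}[U_{t+1}\mid\mathcal{F}_t]$ and $M_{U_t}=U_{t+1}$. The independence of the $G_i$'s together with the bounded-differences estimate yields $|M_k-M_{k-1}|\le 2$ almost surely via a standard coupling argument. Applying Theorem~\ref{azumahoeffding} with $c_k=2$ and $N=U_t$ then gives
$$\mathbb{P}\bigl[U_{t+1}-\mathbb{E}[U_{t+1}\mid U_t]\ge \lambda\,\big|\,U_t\bigr]\le \exp\bigl(-\lambda^2/(8U_t)\bigr),\qquad \lambda>0,$$
and the matching lower tail follows by applying the same argument to the martingale $-M$. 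Finally, Theorem~\ref{subG} applied with $v=4U_t$ converts these two-sided tail bounds into the stated subgaussian property with parameter $16v=64U_t$.

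The step I expect to be the most delicate is the bounded-differences verification, because the jump of the function $x\mapsto x\cdot\mathbb{1}[x\ge 2]$ of size $2$ at $x=2$ rules out the naive estimate $|\Delta|\le 1$; once the correct constant $c_k=2$ is pinned down, the rest of the argument is routine.
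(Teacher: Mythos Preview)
Your proof is correct and follows essentially the same approach as the paper: both use the Doob martingale obtained by revealing the destinations of the $U_t$ unhappy particles one at a time, observe that each increment is bounded by $2$, apply Azuma--Hoeffding to obtain the tail bound $e^{-\lambda^2/8U_t}$, and then invoke Theorem~\ref{subG} with $v=4U_t$ to get the subgaussian parameter $64U_t$. Your case analysis for the bounded-differences estimate is more explicit than the paper's one-line justification, but the argument is otherwise the same.
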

\begin{proof}
Let us assume without loss of generality that the set of unhappy particles at time $t$ is $\mathcal{U}_t=\{p_1,\ldots,p_{U_t}\}$. 
Define the Doob martingale 
\[
    M_i\coloneqq \mathbb{E}_{p_{1,t+1},\dots,p_{i,t+1}}[U_{t+1} \mid U_{t}], \quad 1\le i \le U_t,
\]
and set $M_0=\mathbb{E}[U_{t+1} \mid  U_t]$. In words, $M_i$ is the (conditional) expectation of $U_{ t+1}$ given $U_{t}$, knowing the values of $p_{1,t+1},\dots,p_{i,t+1}$, i.e.\ the movements of the first $i$ particles that were unhappy at step~$t$. 
Clearly, $M_{U_t}=U_{t+1}$ and $|M_{i+1}-M_{i}|\leq 2$ for $1\leq i\leq U_{ t}$ since, by changing the position to which any one of the $U_t$ particles moves to, we alter $U_{t+1}$ by at most $2$. 
Therefore, we can apply the Azuma-Hoeffding inequality from Theorem \ref{azumahoeffding} and obtain
\[
    \mathbb{P}\big[U_{t+1} - \mathbb{E}[U_{t+1}\mid U_t] \ge \lambda \mid U_t\big]
    \le \exp\left(-\lambda^2/8U_t\right);
\]
the other bound is obtained analogously. The claim then follows from Theorem~\ref{subG}.
\end{proof}
An important and useful consequence of the previous statement is the powerful property that the number of unhappy particles $U_{t+1}$ is typically always `close' to its conditional expected value $\mathbb{E}[U_{t+1} \mid U_t]$, as stated in the following lemma.
\begin{lem}
\label{lem:roughcontrolfulltrajectory}
Let $\alpha \in \mathbb{R}$ and $M = n/2 + \alpha  n^{1/2} + o(n^{1/2}) \in \mathbb{N}$. Then $\Prob({\cal E}) = 1 - o(1/n)$, where
\[
    {\cal E}
    \coloneqq
    \bigcap_{t \ge 0} {\cal E}_t
    \quad \text{and} \quad
    \mathcal{E}_t\coloneqq \left\{
        \big|U_{t+1} - \E[U_{t+1} \mid U_t]\big| \le 4\sqrt{U_t \ln n}
    \right\}.
\]
\end{lem}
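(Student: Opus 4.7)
The plan is to combine the pointwise subgaussian tail estimate from Lemma~\ref{lem:subgaussian} with the polynomial-tail bound on the dispersion time supplied by Theorem~\ref{cor:tailbounds}, and then to perform a straightforward union bound over the polynomially many relevant time steps.

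First, to establish the pointwise bound I would apply Lemma~\ref{lem:subgaussian} with $\lambda = 4\sqrt{U_t \ln n}$. Plugging this into the two-sided tail estimate, the exponent becomes $-\lambda^2/8U_t = -2\ln n$, which gives $\Prob({\cal E}_t^c \mid U_t) \le 2 n^{-2}$ for every realization of $U_t$; here I note that the inequality in ${\cal E}_t$ is trivially satisfied whenever $U_t = 0$, since then $U_{t+1} = 0$ deterministically. Averaging over $U_t$ yields the unconditional bound $\Prob({\cal E}_t^c) \le 2n^{-2}$.

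Next, since $U_t = 0$ for all $t \ge T_{n,M}$, the event ${\cal E}_t$ holds automatically beyond the dispersion time, so ${\cal E}^c \subseteq \bigcup_{0 \le t < T_{n,M}} {\cal E}_t^c$. To render the union finite I would truncate using Theorem~\ref{cor:tailbounds} with $A = 2\ln n$, which for sufficiently large $n$ gives $\Prob(T_{n,M} > 2 c_\alpha n^{1/2} \ln n) \le n^{-2}$. A union bound then yields
\[
    \Prob({\cal E}^c)
    \le \Prob\big(T_{n,M} > 2 c_\alpha n^{1/2} \ln n\big) + \sum_{0 \le t \le 2 c_\alpha n^{1/2} \ln n} \Prob({\cal E}_t^c)
    \le n^{-2} + \big(2 c_\alpha n^{1/2} \ln n + 1\big) \cdot 2 n^{-2} = O\!\left(\tfrac{\ln n}{n^{3/2}}\right),
\]
which is $o(1/n)$, as claimed.

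There is no genuine obstacle here: the only point requiring a moment of thought is the calibration, namely that in order to beat $1/n$ by a union bound over $\Theta(n^{1/2} \ln n)$ indices, each contributing $2n^{-2}$, one needs the failure probability of the truncation event to also be $o(1/n)$. Theorem~\ref{cor:tailbounds} accommodates this effortlessly by permitting $A = \Theta(\ln n)$, so the whole argument reduces to the two displayed computations above.
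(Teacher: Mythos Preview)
Your proof is correct and follows essentially the same approach as the paper's: both combine the subgaussian bound from Lemma~\ref{lem:subgaussian} (applied with $\lambda = 4\sqrt{U_t\ln n}$ to get a $2n^{-2}$ failure probability per step), the observation that ${\cal E}_t$ is vacuous once $U_t=0$, and the exponential tail of $T_{n,M}$ from Theorem~\ref{cor:tailbounds} to truncate the union bound at polynomially many steps. The only cosmetic difference is the choice of cutoff (the paper uses $t_n = n^{1/2}\ln^2 n$, you use $2c_\alpha n^{1/2}\ln n$); both yield $o(1/n)$.
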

\begin{proof}
Theorem~\ref{cor:tailbounds} asserts that $T_{n,M} \le t_n \coloneqq n^{1/2} \ln^2 n$ with probability $1 - o(1/n)$, so it suffices to consider $t \le t_n$ only in the following sense. Since $\mathcal{E}^c_t=\emptyset$ for $t\geq T_{n,M}$, we obtain 
\begin{equation*}
    \mathbb{P}(\mathcal{E}^c)
    = \mathbb{P}\Big(\bigcup_{t\leq T_{n,M}}\mathcal{E}^c_t\Big)\leq \mathbb{P}\Big(\bigcup_{t\leq t_n}\mathcal{E}^c_t\Big) + \mathbb{P}(T_{n,M}>t_n)
    =\mathbb{P}\Big(\bigcup_{t\leq t_n}\mathcal{E}^c_t\Big)+o(1/n),
\end{equation*}
and hence we can focus on the events $\mathcal{E}^c_t$ for $t\leq t_n$ only. It follows from Lemma~\ref{lem:subgaussian} that 
\[
    \mathbb{P}\left(|U_{t+1} - \E[U_{t+1} \mid U_t]\big| \ge 4\sqrt{U_t \ln n} \mid U_t\right)
    \le
    2\exp\left(-\frac{16 U_t \ln n}{8U_t}\right) = 2n^{-2}.
\]
A union bound then yields $\mathbb{P}({\cal E}) = 1 - o(1/n) - 2t_nn^{-2} = 1-o(1/n)$, as claimed.
\end{proof}
We proceed with the proof of Lemma~\ref{lem:earlyStepsSummary}. For any $0 < \delta,\eta < 1$ set 
\begin{equation}
\label{eq:istar}
    i^* = i^*(\delta, \eta)
    \coloneqq
    \left\lfloor
    \left(\frac{\ln(\delta n^{3/10})}{\eta}\right)^3
    \right\rfloor
    \text{ and }
    u_i \coloneqq u_i(\delta,\eta)=\delta^{-1} n^{1/2}\exp\left(\eta (i^*-i)^{1/3}\right),~0\le i \le i^*.
\end{equation}
Note that $u_{i^*} = n^{1/2}/\delta$ and $e^{-\eta}n^{4/5}\le u_0 \le n^{4/5}$. 
Moreover, let $T_0 \coloneqq
    \inf\{t \ge 0: U_{t} < u_0\}$,
    and define recursively the stopping times
\[
    T_i \coloneqq
    \inf\big\{t \ge 0: U_{T_0 + T_1 + \dots + T_{i-1} + t} < u_i\big\},
    \quad
    1 \le i \le i^*.
\]
These times play the role of `checkpoints' for the evolution of the number of unhappy particles. Indeed, at time $T_0$ we have, for the first time, less than $u_0$ unhappy particles.
Then, after~$T_1$ additional steps, the number of unhappy particles falls for the first time below $u_1$, \dots, and after~$T_i$ additional steps, the number of unhappy particles falls for the first time below $u_i$. Eventually, after $T_0 + \dots + T_{i^*}$ steps the number of unhappy particles will fall for the first time below $n^{1/2}/\delta$, so that $T_{n,M,\delta} = T_0 + \dots + T_{i^*}$. The definition of the $u_i$'s may look a bit cumbersome, but, as we will see, it is chosen with great care so that it allows us to control the total deviation of all the~$U_t$'s  simultaneously from their typical trajectory.

The rest of the section is structured as follows. First, we study $T_0$ by showing that $U_t$, as long as it is rather largish, stays `very close' to a deterministic value that can be obtained by iterating a (deterministic) function $t$ times. This statement is in a specific sense best possible, as after some point in time the number of unhappy particles starts fluctuating too much. Moreover, we obtain fine control of the sum of the iterates of this function. Then, in Subsection~\ref{ssec:Ti} we use martingale arguments to show a general statement that allows us to bound the number of steps that are needed to drop from a given number $u$ of unhappy particles to a slightly smaller number $(1-\eta)u$. Then we apply this statement with (essentially) the ratios $1-\eta \approx u_{i+1}/u_i$ to gain sufficient control of all $T_i$'s. Finally, in Subsection~\ref{ssec:granfinal} we combine our findings to prove Lemma~\ref{lem:earlyStepsSummary}.

\subsection{Analysis of $T_0$} \label{ssec:veryEarlySteps}

In order to study the beginning of the process we show that the number of unhappy particles at step $t$, as long as $t$ is not too large, stays close to it's iterated expectation, or, more precisely, to an asymptotic version thereof, using Lemma~\ref{eq:FIRSTM}.
\begin{lem} 
\label{lem:veryEarly}
Let $\alpha \in \mathbb{R}$ and $M = n/2 + \alpha n^{1/2} + o(n^{1/2}) \in \mathbb{N}$. Define the functions
\[
    f:\mathbb{R}\to\mathbb{R},
    \quad
    x \mapsto x + \left(\frac12-x\right)(1 - e^{-x}) - x\left(\frac12 + x\right)e^{-x}.
\]
and $f^{(t)}:\mathbb{R}\to\mathbb{R}$ inductively by $f^{(0)}(x) = x$ and $f^{(t+1)}(x) = f(f^{(t)}(x))$ for any $x \in \mathbb{R}$ and $t \in \mathbb{N}_0$. Then, with probability tending to one,
\[
    U_t = f^{(t)}(1/2) n \pm (t+1)n^{1/2}\ln n, \quad t \ge 0.
\]
\end{lem}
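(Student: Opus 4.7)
The plan is to prove the bound by induction on $t$, assembling three ingredients: (i) a drift approximation of the form $\mathbb{E}[U_{t+1}\mid U_t=u] = nf(u/n) + O(n^{1/2})$ uniformly for $u\le M$, (ii) the one-step concentration bound from Lemma~\ref{lem:roughcontrolfulltrajectory}, and (iii) the fact that $f$ is $1$-Lipschitz on the relevant interval. For (i) I would start from the exact expression in Lemma~\ref{firstmomentexact} with $H_t = M-u$, substitute $M/n = 1/2 + \alpha n^{-1/2} + o(n^{-1/2})$, and use the uniform approximation $(1-1/n)^u = e^{-u/n} + O(1/n)$ valid for $0\le u\le n$, which follows from $u\ln(1-1/n) = -u/n + O(u/n^2)$; a routine expansion and collection of terms then yields the claimed drift asymptotics.

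Ingredient (ii) is already packaged: Lemma~\ref{lem:roughcontrolfulltrajectory} delivers an event $\mathcal{E}$ of probability $1-o(1/n)$ on which $|U_{t+1} - \mathbb{E}[U_{t+1}\mid U_t]| \le 4\sqrt{U_t\ln n}$ holds for every $t\ge 0$ simultaneously, and since $U_t \le M \le n$ this is $O(\sqrt{n\ln n})$ per step. For (iii) a direct computation gives $f'(x) = e^{-x}(x^2 - 5x/2 + 1) = e^{-x}(x-1/2)(x-2)$, so $f'$ vanishes at $x\in\{1/2, 2\}$ with $f'(0)=1$, and a brief analysis at the interior critical point $x=1$ shows that $|f'|\le 1$ throughout $[0,2]$. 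Moreover $f$ maps $[0,1/2]$ into itself, so the iterates $f^{(t)}(1/2)$ all lie in $[0,1/2]$, while on $\mathcal{E}$ we have $U_t/n \in [0, 1/2 + O(n^{-1/2})]$; both points therefore sit in a subinterval of $[0,2]$ on which $f$ is $1$-Lipschitz.

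The induction then closes cleanly. The base case $t=0$ reads $|U_0 - n/2| = |M - n/2| = O(n^{1/2}) \le n^{1/2}\ln n$ for large $n$. For the inductive step, assuming $|U_t - nf^{(t)}(1/2)| \le t\,n^{1/2}\ln n$ on $\mathcal{E}$, I decompose
\[
U_{t+1} - nf^{(t+1)}(1/2) = \bigl(U_{t+1} - \mathbb{E}[U_{t+1}\mid U_t]\bigr) + \bigl(\mathbb{E}[U_{t+1}\mid U_t] - nf(U_t/n)\bigr) + n\bigl(f(U_t/n) - f(f^{(t)}(1/2))\bigr)
\]
and apply (ii), (i), (iii) respectively to bound the three summands by $O(\sqrt{n\ln n})$, $O(n^{1/2})$, and $t\,n^{1/2}\ln n$. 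Summing, the total is at most $(t+1)\,n^{1/2}\ln n$ for all sufficiently large $n$, which closes the induction.

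The main obstacle is that $f'(0) = 1$, so $f$ is only marginally contractive near its fixed point $0$, and per-step errors accumulate linearly rather than being damped. This accumulation is precisely what produces the $(t+1)$ factor in the bound; any exponentially growing error would be useless on the timescale $t = \Theta(n^{1/2})$ at which the lemma will be applied. Carefully checking that the effective Lipschitz constant is exactly $1$ (not $1+\Omega(n^{-1/2})$) over the full range of $U_t/n$, which for $\alpha>0$ can initially exceed $1/2$, is the main quantitative subtlety.
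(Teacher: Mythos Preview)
Your proposal is correct and follows essentially the same approach as the paper: both combine the exact drift from Lemma~\ref{firstmomentexact} with the uniform approximation $(1-1/n)^{u}=e^{-u/n}(1+O(1/n))$ to get $\mathbb{E}[U_{t+1}\mid U_t]=nf(U_t/n)+O(n^{1/2})$, invoke Lemma~\ref{lem:roughcontrolfulltrajectory} for the one-step concentration, and close the induction via the $1$-Lipschitz bound $|f'(x)|\le 1$ on $[0,1]$ (your three-term decomposition is exactly what the paper does implicitly). One small slip: your inductive hypothesis should read $|U_t-nf^{(t)}(1/2)|\le (t+1)n^{1/2}\ln n$ to match both the base case and the statement, with the conclusion $(t+2)n^{1/2}\ln n$ at step $t+1$; the per-step budget $n^{1/2}\ln n$ comfortably absorbs the $O(\sqrt{n\ln n})+O(n^{1/2})$ errors for large $n$.
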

\begin{proof}
We begin with two observations that we shall use later. Since $U_t\le n$, by applying Lemma~\ref{lem:roughcontrolfulltrajectory} we obtain that with probability $1-o(1)$ and with room to spare
\begin{equation}
\label{eq:trajUt}
    U_{t+1} = \mathbb{E}[U_{t+1}\mid U_t] \pm 4n^{1/2}\ln^{1/2} n,
    \quad
    \text{for all}
    \quad
    t\in\mathbb{N}_0.
\end{equation}
Moreover, using Lemma~\ref{eq:FIRSTM} we obtain for all sufficiently large $n$ and for all $0 \le x \le 1$ such that $xn \in \mathbb{N}$ and $t\in\mathbb{N}_0$ \begin{equation*}
    \frac1n\mathbb{E}\big[U_{t+1} - U_t \mid  U_t = xn\big]
    =  (1/2 - x)\left(1-\Big(1-\frac{1}{n}\Big)^{xn}\right) - x(1/2+x)\Big(1-\frac{1}{n}\Big)^{xn-1}
    \pm 3\alpha n^{-1/2}.
\end{equation*}
Note that $(1-n^{-1})^n = e^{-1}(1 \pm n^{-1})$, since $(1-n^{-1})^n$ is increasing and $(1-n^{-1})^{n-1}$ is decreasing (and both approach $e^{-1})$. 
Thus it follows from the definition of $f(x)$ given in the statement of the lemma that, for all sufficiently large $n$,
\begin{equation}
\label{eq:ExpUtsimpler}    
    \frac1n\mathbb{E}\big[U_{t+1} \mid  U_t = xn\big]
    = f(x) \pm (3|\alpha|+1) n^{-1/2},
    \quad
    \text{where}
    \quad
    xn \in \mathbb{N}, t\in\mathbb{N}_0.
\end{equation}
In the rest we will assume that $n$ is so large that we can use~\eqref{eq:ExpUtsimpler} and $\ln n \ge 4|\alpha|$. Moreover, we assume that~\eqref{eq:trajUt} holds. Under these assumptions we will show (by induction on $t$) that 
\[U_t = f^{(t)}(1/2) n \pm (t+1)n^{1/2}\ln n \text{ for all }t \in \mathbb{N}_0,\]
so that the lemma is established.
Since $U_0 = M$ and $f^{(0)}(1/2) = 1/2$ the claim is obviously true for $t = 0$. Next, assume the claim holds for some $t \in \mathbb{N}$. From~\eqref{eq:trajUt} and~\eqref{eq:ExpUtsimpler} together with the induction hypothesis we obtain
\[
    \frac1n U_{t+1} = f\big( f^{(t)}(1/2) \pm (t+1)n^{-1/2}\ln n \big) \pm (3|\alpha|+1) n^{-1/2}.
\]
Note that $f'(x) = e^{-x}(1-5x/2+x^2)$, so that $|f'(x)| \le 1$ for all, say, $x\in[0,1]$. Thus, by Taylor's theorem, $f(x+y) = f(x) \pm y$  for all $x,y$ such that $x,x+y\in[0,1]$. We obtain 
\[
    \frac1n U_{t+1} = f^{(t+1)}(1/2) \pm (t+1)n^{-1/2}\ln n \pm (3|\alpha|+1) n^{-1/2}.
\]
Since $\ln n \ge (3|\alpha|+1)$ the claim is established.
\end{proof}
Towards the proof of Theorem~\ref{numbjumps} the following lemma about the sum of the iterates of $f$ will be very useful.
\begin{lem}\label{lem:iterMean}
Let $f,f^{(t)}$  be as in Lemma~\ref{lem:veryEarly}. Then $f^{(t)}(1/2)$ is decreasing in $t$ and 
there is a $\chi \in \mathbb{R}$ such that, as $N\to \infty$,
\[
f^{(N)}(1/2)=\frac47 N^{-1} + O\big(N^{-2} \ln N\big)
\quad
\mbox{and}
\quad
    \sum_{t \le N} f^{(t)}(1/2) = \frac47\ln N + \chi + O(N^{-1} \ln N).
\]
\end{lem}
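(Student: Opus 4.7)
The strategy is to first Taylor expand $f$ around $0$, recognize the recursion as a perturbation of the prototypical $x_{t+1}=x_t - cx_t^2$, and then apply the standard reciprocal-transform trick together with a bootstrap argument. A quick calculation gives
\[
    f(x) = x - \tfrac74 x^2 + \tfrac43 x^3 + O(x^4), \quad x \to 0.
\]
Before any asymptotics, I would establish that $f$ maps $(0,1/2]$ into itself and satisfies $0<f(x)<x$ there. Positivity follows from rewriting $f(x)=\tfrac12(1-e^{-x})+e^{-x}(x/2-x^2)$, which is manifestly positive on $(0,1/2]$ since $x^2\le x/2$. The bound $f(x)<x$ on $(0,1/2]$ is an elementary one-variable inequality (verify at $x=1/2$ and at $x\to 0^+$, where the leading Taylor term is $-\tfrac74 x^2<0$, and then use monotonicity of $f(x)-x$, which can be done by checking its derivative). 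An immediate induction then yields that the sequence $x_t\coloneqq f^{(t)}(1/2)$ lies in $(0,1/2]$ and is strictly decreasing, giving the first assertion of the lemma.

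\textbf{Asymptotics of $f^{(N)}(1/2)$.} The plan is the classical reciprocal substitution: set $y_t\coloneqq 1/x_t$. Using $f(x)=x(1-\tfrac74 x + \tfrac43 x^2+O(x^3))$ and the expansion $1/(1-u)=1+u+u^2+O(u^3)$, one obtains
\[
    y_{t+1}-y_t = \frac74 + \Bigl(\Bigl(\frac74\Bigr)^{\!2}-\frac43\Bigr)x_t + O(x_t^2)
    = \frac74 + O(x_t).
\]
Since the $x_t$ are bounded, summing the coarse estimate $y_{t+1}-y_t\ge \tfrac74 - C x_t$ yields $y_t\ge \tfrac74 t - C'$ for all $t$, hence $x_t=O(1/t)$. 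Feeding this back into the recursion for $y_t$ gives
\[
    y_t = \frac74 t + O(\ln t), \qquad \text{whence} \qquad x_t = \frac4{7t}\cdot\frac1{1+O(\ln t/t)} = \frac4{7t} + O\!\Bigl(\frac{\ln t}{t^2}\Bigr),
\]
which is precisely the first claimed asymptotic. The main technical point here is the bootstrap: one uses the crude bound $x_t=O(1/t)$ to justify $\sum_{s<t}O(x_s)=O(\ln t)$, which refines the estimate for $y_t$.

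\textbf{Asymptotics of the sum.} For the second assertion, I would decompose
\[
    \sum_{t=0}^{N} f^{(t)}(1/2) = \frac12 + \frac47\sum_{t=1}^{N} \frac1t + \sum_{t=1}^N\!\Bigl(x_t - \frac4{7t}\Bigr).
\]
For the first sum, $\sum_{t=1}^N 1/t = \ln N + \gamma + O(1/N)$. The error series $\sum_{t\ge 1} (x_t - 4/(7t))$ is absolutely convergent by the bound $|x_t-4/(7t)|=O(\ln t/t^2)$ established in the previous step, so its partial sums converge to some finite constant $s^*$, with tail bound $\sum_{t>N}|x_t-4/(7t)| = O(\ln N / N)$. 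Setting
\[
    \chi \coloneqq \frac12 + \frac{4\gamma}7 + s^*
\]
then yields the claimed expansion $\sum_{t\le N} f^{(t)}(1/2) = \tfrac47\ln N + \chi + O(N^{-1}\ln N)$, where the error is dominated by the $O(\ln N/N)$ tail of the error series (the $O(1/N)$ from the harmonic sum is absorbed).

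\textbf{Main obstacle.} The only nontrivial part of the plan is handling the bootstrap for $y_t$ with correct constants: one must be careful that the Taylor remainder for $1/f(x)$ really is $O(x^2)$ uniformly, and that using the first-pass bound $x_t=O(1/t)$ is enough to obtain the $O(\ln t)$ correction to $y_t$. Everything else is routine, and the existence of the constant $\chi$ is a clean consequence of the summability of $\ln t/t^2$, not requiring any delicate telescoping.
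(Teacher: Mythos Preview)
Your proof is correct and follows essentially the same route as the paper: establish $0<f(x)<x$ on $(0,1/2]$, apply the reciprocal transform $y_t=1/x_t$ to get $y_{t+1}-y_t=\tfrac74+O(x_t)$, bootstrap to $x_t=4/(7t)+O(t^{-2}\ln t)$, and then use absolute summability of $x_t-4/(7t)$ together with the harmonic-sum expansion to obtain $\chi$. One small slip: summing $y_{t+1}-y_t\ge \tfrac74 - Cx_t$ does not directly give $y_t\ge \tfrac74 t - C'$ (the sum $\sum_{s<t}x_s$ is not a priori bounded); the clean fix, which the paper uses, is to first observe $x_t\to 0$ (the decreasing sequence has limit a fixpoint of $f$, hence $0$), so that $y_{t+1}-y_t\ge 1$ eventually and $x_t\le 2/t$ for large $t$.
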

\begin{obs}\label{rem:chi}
Numerical investigations show that $\chi = -0.1236(\dots)$. However, the convergence is quite slow, as indicated by our error bounds.
\end{obs}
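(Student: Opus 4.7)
The plan is a direct numerical evaluation using the characterization furnished by the preceding Lemma~\ref{lem:iterMean},
\[
    \chi = \lim_{N \to \infty}\bigl(S_N - \tfrac{4}{7}\ln N\bigr), \qquad S_N \coloneqq \sum_{t=0}^{N} f^{(t)}(1/2),
\]
together with the rigorous error bound $|S_N - \tfrac{4}{7}\ln N - \chi| = O(N^{-1}\ln N)$. Concretely, I would tabulate the orbit $y_t = f^{(t)}(1/2)$ by straightforward forward iteration of the explicit map $f$ recorded in Lemma~\ref{lem:veryEarly}, starting from $y_0 = 1/2$ and accumulating the running partial sum $S_N$. Reporting $\chi_N \coloneqq S_N - \tfrac{4}{7}\ln N$ at a large value of $N$ then produces the approximation: already $N \approx 10^{6}$ gives accuracy of order $10^{-5}$, enough to read off the digits $-0.1236$, and pushing $N$ to, say, $10^{9}$ (still cheap since each iteration of $f$ is $O(1)$ flops) buys one or two additional decimals.

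To guard against floating-point error I would use quadruple or arbitrary-precision arithmetic, since $f(x)-x$ is, for small $x$, a near-cancellation of order $x^2$ and naive double precision degrades once $y_t$ drops below $10^{-8}$. As a sanity check and to accelerate convergence I would either (a)~evaluate $\chi_N$ at a geometric sequence of values of $N$ and apply Richardson-type extrapolation tailored to the leading $N^{-1}\ln N$ correction, or (b)~extend the asymptotic expansion $y_t = \tfrac{4}{7t} + O(t^{-2}\ln t)$, extracted from the fixed-point analysis of $f$ at $0$ used in the proof of Lemma~\ref{lem:iterMean}, to the next order with an explicit subleading coefficient, then sum that contribution in closed form and subtract it from $S_N$ before comparing to $\tfrac{4}{7}\ln N$.

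The main (and only real) obstacle is the slow, logarithmically tainted rate of convergence flagged in the observation itself: the tail $O(N^{-1}\ln N)$ means that a naive run with modest $N$ cannot pin down further digits, which is exactly why the value is stated as $-0.1236(\ldots)$ rather than with more precision. Rigorously certifying additional digits would require either significantly larger $N$ with careful round-off control or the refined expansion just described; both routes are routine in principle and produce the stated numerical value.
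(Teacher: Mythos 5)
Your proposal matches the paper's (implicit) method: the remark is a report of a numerical computation, and the only available handle is precisely the characterization from Lemma~\ref{lem:iterMean}, namely $\chi = \lim_{N\to\infty}\bigl(\sum_{t\le N}f^{(t)}(1/2) - \tfrac47\ln N\bigr)$ with error $O(N^{-1}\ln N)$. Iterating the explicit map $f$ from $x_0 = 1/2$, accumulating the partial sums, subtracting $\tfrac47\ln N$, and taking $N$ large is exactly what one must do; your extra remarks about precision near the fixed point $x=0$ and about Richardson-type acceleration (or subtracting the known $4/(7t)$ tail) are sensible practical refinements but do not change the underlying argument. In short, correct, and essentially the same approach as the paper.
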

\begin{proof}
Let us collect some properties of $f$.
Note that $f'(x) = e^{-x}(1-5x/2+x^2)$ and that $1/2$ and $2$ are the roots of $1-5x/2+x^2$. So, $f'$ is positive and decreases on $[0,1/2)$. Since $f(0) = 0$ we obtain $f(x) > 0$ for $x \in (0,1/2]$. Moreover, $f'(0) = 1$ and thus $(f(x)-x)' < 0$ on $(0,1/2
]$; again using that $f(0) = 0$ we obtain $f(x) < x$ in this interval. All in all we have established
\begin{equation}
\label{eq:boundsf}
    0 < f(x) < x, \quad x \in (0,1/2].
\end{equation}
Let us consider the sequence $(f^{(t)}(1/2))_{t \in \mathbb{N}_0}$ which, according to~\eqref{eq:boundsf}, is \emph{decreasing} and \emph{positive}. Since $0$ is the only fixpoint of the (continuous) $f$ in $[0,1/2]$ we obtain
\begin{equation}
\label{eq:limft}
    f^{(t)}(1/2) \to 0 \quad \text{as} \quad t \to \infty.
\end{equation}
A simple computation shows that 
\[
    f(0) = 0, f'(0) = 1, f''(0) = -\frac72
    \quad
    \text{and}
    \quad 
    f'''(x) = e^{-x}(8-13x/2+x^2).
\]
and so there is a $C>0$ such that
\begin{equation}
\label{eq:taylorf}
    f(x) = x - \frac74x^2 \pm Cx^3, \quad x\in[0,1/2].
\end{equation}
Up to now we have collected several properties of $f$ that are actually sufficient to establish the claim. We will proceed with a routine argument, and the interested reader may consult~\cite[Ch.~8.4]{book:deBruijn} for prototypical applications. Let us abbreviate $x_t = f^{(t)}(1/2)$ and define $y_t = x_t^{-1}$. We obtain from~\eqref{eq:taylorf} and~\eqref{eq:limft}, as $t \to \infty$,
\[
    y_{t+1}
    = y_t \Big(1 - \frac74 x_t + O(x_t^2)\Big)^{-1}
    = y_t \Big(1 + \frac74 x_t + O(x_t^2)\Big)
    = y_t + \frac74 + O(x_t).
\]
Since $x_t \to 0$ as $t\to\infty$ we obtain that, say, $y_{t+1} - y_t \ge 1$ for all sufficiently large $t$. Thus $y_t \ge t/2$ and so $x_t \le 2/t$ for all sufficiently large $t$. This, in turn, implies $y_t = \frac{7t}{4} + O(\ln t)$, and thus
\[
    x_t = f^{(t)}(1/2) = \frac47 t^{-1} + O\big(t^{-2} \ln t\big),
    \quad
    t\to\infty.
\]
In other words, if we set $g(t) \coloneqq f^{(t)}(1/2) - 4/7t$, then $|g(t)| = O(t^{-2} \ln t)$. Thus $\sum_{t \ge 0} g(t)$ is absolutely convergent, and since $\sum_{1 \le t \le N} t^{-1} = \ln N + \gamma + O(N^{-1})$ for a $\gamma \in \mathbb{R}$ the claim is established.
\end{proof}
By combining the previous facts we readily obtain the likely value of $T_0$ and $U_{T_0}$. 
\begin{cor}
\label{cor:T0}
Let $\alpha \in \mathbb{R}, M = n/2 + \alpha n^{1/2} + o(n^{1/2}) \in \mathbb{N}$ and $0<\delta, \eta<1$. Then, with probability tending to one as $n\to\infty$,
\[
    T_0 = (1\pm 2\eta) \frac{4}{7} n^{1/5}
    \quad
    \text{and}
    \quad
    U_{T_0} = (1+o(\ln^{-4}n)) u_0.
\]
\end{cor}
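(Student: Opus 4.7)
The plan is to combine Lemma~\ref{lem:veryEarly} with Lemma~\ref{lem:iterMean} to extract a sharp asymptotic for $U_t$ in the window $t = \Theta(n^{1/5})$, and then translate this into the claimed bounds on $T_0$ and $U_{T_0}$.

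From the definitions in~\eqref{eq:istar} we have $e^{-\eta} n^{4/5} \le u_0 \le n^{4/5}$. By Lemma~\ref{lem:iterMean}, whenever $t = \Theta(n^{1/5})$ we have $f^{(t)}(1/2) n = \frac{4n}{7t}(1+o(1))$, the error being of order $n^{3/5}\ln n$. On the event of probability $1-o(1)$ on which Lemma~\ref{lem:veryEarly} holds, the additive correction $(t+1) n^{1/2}\ln n = O(n^{7/10}\ln n)$ is likewise negligible compared with $u_0 \asymp n^{4/5}$, yielding
\[
    U_t = \frac{4n}{7t}\bigl(1 + o(1)\bigr) \qquad\text{uniformly for } t = \Theta(n^{1/5}).
\]

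To bound $T_0$, observe that for $t \le (1-2\eta)\tfrac{4}{7} n^{1/5}$ the main term satisfies $\frac{4n}{7t} \ge \frac{n^{4/5}}{1-2\eta} > n^{4/5} \ge u_0$, so for all sufficiently large $n$ we get $U_t > u_0$, giving $T_0 > (1-2\eta)\tfrac{4}{7} n^{1/5}$. Conversely, for $t = \lceil (1+2\eta)\tfrac{4}{7} n^{1/5}\rceil$ we have $\frac{4n}{7t} \le \frac{n^{4/5}}{1+2\eta} < e^{-\eta} n^{4/5} \le u_0$, where the middle inequality uses $e^\eta < 1+2\eta$ on $(0,1]$ (convexity check at the endpoints). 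Hence $U_t < u_0$ and $T_0 \le (1+2\eta)\tfrac{4}{7} n^{1/5}$.

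For the sharp bound on $U_{T_0}$, we have $U_{T_0} < u_0$ by definition, so it remains to show $U_{T_0} \ge u_0\bigl(1 - o(\ln^{-4} n)\bigr)$. We know $U_{T_0-1} \ge u_0$ and, by the display above applied to $T_0-1$, that $U_{T_0-1} = O(n^{4/5})$. Lemma~\ref{lem:exponestepchange} then gives $\bigl|\mathbb{E}[U_{T_0} \mid U_{T_0-1}] - U_{T_0-1}\bigr| = O(U_{T_0-1}^2/n) = O(n^{3/5})$, while Lemma~\ref{lem:roughcontrolfulltrajectory} yields $\bigl|U_{T_0} - \mathbb{E}[U_{T_0} \mid U_{T_0-1}]\bigr| = O(\sqrt{U_{T_0-1}\ln n}) = O(n^{2/5}\sqrt{\ln n})$ on an event of probability $1-o(1/n)$. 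Consequently $|U_{T_0-1} - U_{T_0}| = O(n^{3/5})$, so $U_{T_0} \ge u_0 - O(n^{3/5}) = u_0(1 - O(n^{-1/5}))$; since $n^{-1/5} = o(\ln^{-4} n)$, the claim $U_{T_0} = (1 + o(\ln^{-4} n))u_0$ follows.

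The only real obstacle is careful error bookkeeping: one must verify at each step that the accumulated errors remain $o(u_0)$ for the window statement and the sharper $o(u_0/\ln^4 n)$ for the refined estimate at $T_0$. Since the concentration bound of Lemma~\ref{lem:roughcontrolfulltrajectory} holds uniformly in $t$, evaluating at the random time $T_0$ causes no additional difficulty.
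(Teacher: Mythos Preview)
Your argument for locating $T_0$ is essentially the paper's: combine Lemma~\ref{lem:veryEarly} with the asymptotic $f^{(t)}(1/2)\sim \tfrac{4}{7t}$ from Lemma~\ref{lem:iterMean} to sandwich the crossing time. One small gap: your displayed asymptotic $U_t=\tfrac{4n}{7t}(1+o(1))$ is only justified for $t=\Theta(n^{1/5})$, yet you then invoke it for \emph{all} $t\le(1-2\eta)\tfrac{4}{7}n^{1/5}$. For small $t$ the approximation $f^{(t)}(1/2)\approx\tfrac{4}{7t}$ fails (indeed $\tfrac{4}{7t}>f^{(t)}(1/2)$ there), so the main-term bound alone does not give $U_t>u_0$. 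The paper covers this by using that $f^{(t)}(1/2)$ is decreasing (Lemma~\ref{lem:iterMean}), so $U_t\ge f^{(N_-)}(1/2)\,n - O(n^{7/10}\ln n)$ for every $t\le N_-$; this is a one-line fix for you as well.

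For the second claim the two proofs diverge. The paper bounds the one-step change $|U_{t+1}-U_t|$ throughout the window $[N_-,N_+]$ by differencing the deterministic trajectory from Lemmas~\ref{lem:veryEarly} and~\ref{lem:iterMean}, obtaining $|U_{t+1}-U_t|=O(n^{7/10}\ln n)=o(u_0\ln^{-4}n)$. You instead apply the drift estimate (Lemma~\ref{lem:exponestepchange}) and the uniform concentration (Lemma~\ref{lem:roughcontrolfulltrajectory}) directly at the single step $T_0-1\to T_0$, getting the sharper bound $|U_{T_0}-U_{T_0-1}|=O(n^{3/5})$. Both routes are valid; yours is slightly more direct and avoids controlling the difference $f^{(t+1)}-f^{(t)}$, while the paper's route yields the stronger uniform statement $U_{t+1}=(1+o(\ln^{-4}n))U_t$ on the whole window, which is more than is needed here.
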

\begin{proof}
Recall from (\ref{eq:istar}) that $e^{-\eta}n^{4/5}\leq u_0\leq n^{4/5}$.
By Lemma~\ref{lem:veryEarly} with probability tending to one as $n\to \infty$
\[
U_t = f^{(t)}(1/2) n \pm (t+1)n^{1/2}\ln n, \quad t \ge 0.
\]
We will assume that this event holds for the remainder of the proof. Setting $N_- \coloneqq 4(1-\eta)n^{1/5}/7$ by Lemma~\ref{lem:iterMean}, we have for every $t\le N_-$ that 
\[
    U_{t}\ge (1+\eta)n^{4/5}+O\left(n^{3/5}\ln{n}\right)\pm 2n^{7/10} \ln n
    > n^{4/5}
    \ge u_0
\]
for all large enough $n$. Moreover, setting $N_+ \coloneqq 4(1+2\eta)n^{1/5}/7$,
\[
    U_{N_+}\le \frac{n^{4/5}}{1+2\eta}+O\left(n^{3/5}\ln{n}\right)\pm 2n^{7/10} \ln n
    < e^{-\eta} n^{4/5}
    \le u_0
\]
for all large enough $n$. Therefore $4(1-\eta)n^{1/5}/7\leq T_0\leq 4(1+2\eta)n^{1/5}/7,$
establishing the first claim. Concerning the second statement, it follows again from Lemma~\ref{lem:iterMean} that, whenever $t \in [N_-, N_+]$,
\[
    |U_{t+1} - U_t|
    \le O(nt^{-2}\ln t) + 2(t+2)n^{1/2}\ln n
    = o(n / t \ln^4{n})
\]
and so $U_{t+1} = (1+o(\ln^{-4}{n})) U_t$ for all such $t$.
\end{proof}

\subsection{Analysis of $T_i, 1\le i \le i^*$}
\label{ssec:Ti}

Our next lemma  states that if we begin with a sufficiently large number $u$ of unhappy particles, then typically the number of steps that are required to drop  below $(1-\eta)u$ unhappy particles is roughly $4\eta n / 7u$ and, during those steps, the number of unhappy particles never exceeds $(1+\eta)u$. In the sequel, this will allow us to study $T_i$, by selecting $u_i = (1-\eta_i)u_{i-1}$, see~\eqref{eq:istar}. In the proof of Lemma~\ref{lem:martingalepair} below we construct two processes, a supermartingale and a submartingale, which bound from below and above, respectively, the number of unhappy particles; then we analyse these processes by exploiting that $U_{t}$, conditional on $U_{t-1}$, is subgaussian, see Lemma~\ref{lem:subgaussian}. 
\begin{lem}
\label{lem:martingalepair}
Let $0 < \eta < 1/16$. Then there is an $n_0 \in \mathbb{N}$ such that for all $n \ge n_0$ the following is true.
Let $\alpha \in \mathbb{R}$ and $M = n/2 + \alpha n^{1/2} + o(n^{1/2}) \in \mathbb{N}$. Moreover, let $u:\mathbb{N}\to\mathbb{N}$ be such that $3|\alpha|\eta^{-1}n^{1/2} \le u \le n/\ln^{1/2} n$ and, for $t_0 \in \mathbb{N}$, let
$$
    T
    = T(u,t_0)
    \coloneqq \inf\big\{t\in \mathbb{N}_0: U_{t_0+t}\not\in [(1-\eta)u,(1+\eta)u]\big\}.
$$
Then, conditional on $U_{t_0}=u$, with probability at least $1 - 8\exp\left(- \eta^3 u^2/800 n\right)$, 
\[
    T = (1  \pm 5\eta) \frac{4\eta n}{7u}
    \quad\text{and}\quad
    U_{t_0 + T} = (1 - \eta)(1 \pm 12\eta^2)u.
\]
\end{lem}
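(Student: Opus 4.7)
The plan is to apply martingale concentration to $(U_{t_0+t})_{t\ge 0}$ by comparing it to its deterministic drift. First, using Lemma~\ref{lem:exponestepchange} with $\varepsilon=2\alpha/n^{1/2}+o(n^{-1/2})$ together with the two hypotheses $u\ge 3|\alpha|\eta^{-1}n^{1/2}$ and $u\le n/\ln^{1/2}n$, I would show that for every $v\in[(1-\eta)u,(1+\eta)u]$ and all sufficiently large $n$,
\[
\mathbb{E}\bigl[U_{t+1}-U_t\mid U_t=v\bigr]=-d\,(1\pm 3\eta),\qquad d:=\frac{7u^2}{4n}.
\]
Indeed, the lower bound on $u$ gives $|\varepsilon v|/d\le \tfrac12\eta$, the window $[(1-\eta)u,(1+\eta)u]$ contributes a factor $1\pm(2\eta+\eta^2)$ relative to $d$, and the upper bound on $u$ makes the $u/n$ and $u^3/n^2$ remainders in Lemma~\ref{lem:exponestepchange} of order $o(d)$. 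The mean time to traverse $[(1-\eta)u,u]$ is thus $T^\ast:=\eta u/d=4\eta n/(7u)$, which matches the claim.

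Set $T^\pm:=(1\pm 5\eta)T^\ast$, $\sigma^+:=\inf\{t\ge 0:U_{t_0+t}>(1+\eta)u\}$ and $\sigma^-:=\inf\{t\ge 0:U_{t_0+t}<(1-\eta)u\}$ so that $T=\sigma^+\wedge\sigma^-$, and introduce the centred martingale
\[
M_t:=\sum_{s=0}^{t-1}\bigl(U_{t_0+s+1}-U_{t_0+s}-\mathbb{E}[U_{t_0+s+1}-U_{t_0+s}\mid\mathcal{F}_{t_0+s}]\bigr),\quad M_0:=0.
\]
For any stopping time $\tau\le T\wedge T^+$, Lemma~\ref{lem:subgaussian} shows that $(M_{t\wedge\tau})_{t\ge 0}$ has conditionally subgaussian increments with parameter at most $64(1+\eta)u$ (since $U_{t_0+s}\le(1+\eta)u$ for $s<\tau$), so Lemma~\ref{lem:martingale} applied to $\pm M$ gives $\mathbb{P}(|M_\tau|\ge\lambda)\le 2\exp(-\lambda^2/(128(1+\eta)u\,T^+))$. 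Three bad events then need to be controlled. On $B_1=\{\sigma^+\le T^+\wedge\sigma^-\}$ the drift sum up to $\sigma^+$ is non-positive, so $M_{\sigma^+}>\eta u$ and $\mathbb{P}(B_1)\le\exp(-c\eta u^2/n)$. On $B_2=\{\sigma^-<T^-,\,\sigma^-\le\sigma^+\}$ the drift sum has magnitude at most $(1+3\eta)(1-5\eta)\eta u=(1-2\eta-15\eta^2)\eta u<\eta u$, hence $M_{\sigma^-}<-\eta^2 u(2+15\eta)$ and $\mathbb{P}(B_2)\le\exp(-c\eta^3 u^2/n)$. On $B_3=\{\sigma^+\wedge\sigma^->T^+\}$ the drift sum has magnitude at least $(1-3\eta)(1+5\eta)\eta u=(1+2\eta-15\eta^2)\eta u>\eta u$, hence $M_{T^+}>\eta^2 u(2-15\eta)>\eta^2 u$ (using $\eta<1/16$), and again $\mathbb{P}(B_3)\le\exp(-c\eta^3 u^2/n)$.

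On the complement of $B_1\cup B_2\cup B_3$ one has $T=\sigma^-\in[T^-,T^+]$, yielding the first conclusion. For the value at exit, $U_{t_0+T}<(1-\eta)u\le(1-\eta)(1+12\eta^2)u$ is automatic, so it remains to establish the lower bound $U_{t_0+T}\ge(1-\eta)(1-12\eta^2)u$; since $U_{t_0+T-1}\ge(1-\eta)u$ on the good event, this reduces to the single-step bound $U_{t_0+T-1}-U_{t_0+T}\le 12\eta^2 u$. A direct application of Lemma~\ref{lem:subgaussian} combined with the per-step drift bound $|D_{s-1}|\le 2d\le 6\eta^2 u$ (valid because $u\le n/\ln^{1/2}n$ and $n$ is large compared with $\eta$) yields
\[
\mathbb{P}\bigl(U_{t_0+s-1}-U_{t_0+s}>12\eta^2 u\bigm|\mathcal{F}_{t_0+s-1}\bigr)\le 2\exp(-c\eta^4 u),
\]
and a union bound over $s\le T^+$ produces a total error of at most $T^+\cdot 2\exp(-c\eta^4 u)$, which is dominated by $\exp(-\eta^3 u^2/(800n))$ once $n\ge n_0(\eta)$. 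Aggregating these four error contributions and tuning constants yields the claimed probability $1-8\exp(-\eta^3 u^2/(800n))$.

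The main obstacle is the fine coordination of the three small parameters $\eta$, $n^{1/2}/u$, and the drift error: the window $[T^-,T^+]$ has width $10\eta T^\ast$ but the drift tolerance already consumes $6\eta T^\ast$ of it, so the martingale fluctuations must fit in a margin of only $\Theta(\eta^2 T^\ast)$. This is precisely what the hypothesis $\eta<1/16$ together with $u\ge 3|\alpha|\eta^{-1}n^{1/2}$ is designed to secure, and accounts for the $\eta^3$ (rather than $\eta$) in the final probability bound. A secondary difficulty is the careful bookkeeping of stopping times so that the subgaussian parameter of each stopped martingale stays uniformly bounded by $64(1+\eta)u$ without circular reasoning.
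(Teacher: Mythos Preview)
Your approach is correct and close in spirit to the paper's, but the execution differs in two notable ways. First, where you work with the raw Doob martingale $M_t=\sum_{s<t}(U_{t_0+s+1}-U_{t_0+s}-D_s)$, the paper builds the drift directly into two auxiliary processes $W_t^{\pm}=U_{t\wedge T}+\frac{7}{4}\frac{(1\pm\eta)^3u^2}{n}(t\wedge T)$, checked to be a sub- and supermartingale respectively; this is cosmetically different but amounts to the same concentration argument for $B_1,B_2,B_3$. Second, and more substantively, your exit-value argument is different: you bound the single-step overshoot $U_{t_0+T-1}-U_{t_0+T}$ by $O(\eta^2 u)$ via a pointwise subgaussian bound and a union over $\le T^+$ steps, which forces you to use $d\le 3\eta^2 u$ (hence $u\le c\eta^2 n$, which only follows from $u\le n/\ln^{1/2}n$ for $n$ huge in terms of $\eta$). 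The paper instead bounds $U_{t_0+T}$ from below by a single application of the submartingale inequality to $W^+_{T\wedge t^+}$, obtaining the $\exp(-\eta^3 u^2/800n)$ bound directly without any per-step control or union bound. Your route works but costs a much larger $n_0(\eta)$; the paper's is cleaner and more uniform. One small arithmetic slip: your reduction actually requires the single-step drop to be at most $12\eta^2(1-\eta)u$, not $12\eta^2 u$, to reach $(1-\eta)(1-12\eta^2)u$; this is easily absorbed by tightening the constant from $12$ to $11$ in the subgaussian step.
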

\begin{proof}
Since $(U_t)_{t\in\mathbb{N}_0}$ is a Markov chain, we assume without loss of generality within the proof that $t_0 = 0$; in particular, with slight abuse of notation, $U_0 = u$. Set $\varepsilon \coloneqq {2M}/{n}-1 \sim {2\alpha}n^{-1/2}$ and define, for $t\in \mathbb{N}_0$, the two discrete-time random processes
\[
    W_t^{+}\coloneqq U_{t\wedge T}+\frac74\frac{(1+\eta)^3 u^2}{n}(t \wedge T)
    \quad \mbox{and}\quad
    W_t^{-}
    \coloneqq U_{t\wedge T}+\frac74\frac{(1-\eta)^3 u^2}{n}(t \wedge T). 
\]
For $t\in\mathbb{N}_0$, let~$\mathcal{F}_t$ be the $\sigma$-algebra generated by the process of unhappy particles until step $t$, and set ${\cal F} \coloneqq ({\cal F}_t)_{t\in \mathbb{N}_0}$. We will argue that $(W_t^+)_{t\in\mathbb{N}_0}$ is a submartingale and that $(W_t^-)_{t\in\mathbb{N}_0}$ is a supermartingale (both with respect to $\cal F$).
By applying Lemma~\ref{lem:exponestepchange} we see that there is a
$n_0 \in \mathbb{N}$ such that for all $n\ge n_0$ we have for every $3|\alpha| \eta^{-1}n^{1/2} \le u \le n/\ln^{1/2} n$ that
\[
    \E\big[U_{t+1} - U_t \mid U_t=u \big]
    = \varepsilon u-\frac{u^2}{n}\left(\frac74 + \frac{3\varepsilon}4\right) + O\left(\frac{u}{n}+\frac{u^3}{n^2}\right)
    = - \big(1 \pm \eta)\frac{7}{4}\frac{u^2}{n}.
\]
In what follows we will always assume that $n \ge n_0$.
Note that, by the Markov property of $(U_t)_{t \ge 0}$, $\mathbb{E}[W_t^-\mid \mathcal{F}_{t-1}] = \mathbb{E}[W_t^-\mid U_{t-1}]$. Since $(1-\eta)u\leq U_{t-1}\leq (1+\eta)u$ if $t\le T$, we obtain under this assumption that 
\begin{align}
\label{supmgcalculation}
    \nonumber\mathbb{E}[W_t^-\mid \mathcal{F}_{t-1}]
    &=\mathbb{E}[U_t\mid U_{t-1}] + \frac74\frac{(1-\eta)^3 u^2}{n}t\\
    \nonumber&\le U_{t-1}-(1- \eta)\frac{7}{4}\frac{U_{t-1}^2}{n}+\frac74\frac{(1-\eta)^3 u^2}{n}t\\
    &\leq U_{t-1}-(1- \eta )\frac{7}{4}\frac{(1-\eta)^2 u^2}{n}+\frac74\frac{(1-\eta)^3 u^2}{n}t=W_{t-1}^-.
\end{align}
On the other hand, $W_t^-=W_{t-1}^-$ when $t>T$, showing that $W_t^-$ is a supermartingale. An analogous argument, using again that $(1-\eta)u\leq U_{t-1} \le (1+\eta)u$ when $t \le T$, shows that $\mathbb{E}[W_t^+\mid \mathcal{F}_{t-1}]\ge W_{t-1}^+$ and therefore $W_t^+$ is a submartingale.  Having established the super/sub-martingale nature of $W^-$ and $W^+$ we claim that
\begin{equation}\label{LBTu}
    \mathbb{P}(T < t^-)
    \le 2\exp\left(-\frac{\eta^3 u^2}{800 n}\right),
    \quad
    \text{where}
    \quad
    t^-
    =t^-(u,n,\eta)
    \coloneqq \frac{4\eta n}{7u} \frac{1-\eta}{(1+\eta)^3}.
\end{equation}
To see this, note that if $T<t^-$ then either $U_{t^-\wedge T}>(1+\eta)u$ or $U_{t^-\wedge T}<(1-\eta)u$ and hence
\begin{equation}\label{LBTusplit}
    \mathbb{P}(T<t^-)\leq \mathbb{P}(U_{t^-\wedge T}>(1+\eta)u)+\mathbb{P}(U_{t^-\wedge T}<(1-\eta)u).
\end{equation}
We start by estimating the probability that $U_{t^-\wedge T} > (1+\eta)u$. To this end we will utilize the supermartingale $W^-_{t^-}$ since, roughly speaking, it is `hard' for a supermartingale to become \textit{large} 
and hence the probability of interest should be (very) small.
Similarly, we will estimate the probability that $U_{t^-\wedge T}$ is smaller than $(1-\eta)u$ by by utilizing $W^+_{t^-}$; this will give us a good bound on the probability of interest since it is `hard' for a submartingale to become \textit{small}.
Following this discussion and using that $W^-_0=U_0=u$ we obtain
\begin{equation}\label{supmgLB}
    \mathbb{P}(U_{t^-\wedge T}>(1+\eta)u)
    = \mathbb{P}\Big(W^-_{t^-}-\frac74\frac{(1-\eta)^3u^2}{n}(t^-\wedge T)> (1+\eta)u\Big)
    \le \mathbb{P}\big(W^-_{t^-}-W^-_0> \eta u\big).
\end{equation}
Lemma~\ref{lem:subgaussian} asserts that $U_t$, conditional on $U_{t-1}$, is a subgaussian random variable with parameter $64U_{t-1}$. This in turn implies that, for every $t\in \mathbb{N}$, the random variable $W_t^{-}$ conditional on $U_{t-1}$ is subgaussian with parameter $64(1+\eta)u \le 100 u$ (with room to spare); the same is true for $W_t^+$ conditional on $U_{t-1}$.
Thus, using Lemma~\ref{lem:martingale} we obtain that
\[
    \mathbb{P}\big(W^-_{t^-}-W^-_0\geq \eta u\big)\leq \exp\left(-\frac{\eta^2 u^2}{ 200 u t^-}\right),
\]
and substituting the value of $t^-$ we see that the last expression is at most half of the upper bound on $\mathbb{P}(T < t^-)$ in \eqref{LBTu}. It remains to bound the second probability on the right-hand side of \eqref{LBTusplit}. Note that
\begin{align*}
    \mathbb{P}(U_{t^-\wedge T}<(1-\eta)u)
    &= \mathbb{P}\Big(W^+_{t^-}-W^+_0<- \eta u+\frac74\frac{(1+\eta)^3u^2}{n}(t^-\wedge T)\Big)\\
    &\le \mathbb{P}\Big(W^+_{t^-}-W^+_0<- \eta u + \frac74\frac{(1+\eta)^3u^2}{n}t^-\Big)\\
    &=\mathbb{P}\big(W^+_{t^-}-W^+_0< -\eta u+\eta u-\eta^2u\big)\\
    &\leq  \mathbb{P}\big(-(W^+_{t^-}-W^+_0)> \eta^2 u\big).
\end{align*}
Using once more Lemmas~\ref{lem:martingale} and~\ref{lem:subgaussian} we see that the last expression is at most
$\exp\big(-\frac{7\eta^3 u^2}{800 n}\big)$, 
which is also at most half of the expression in \eqref{LBTu}, completing its proof. 
Our next goal is to show
\begin{equation}\label{LBTl}
    \mathbb{P}(T>t^+)\leq \exp\left(-\frac{\eta^3 u^2 }{800 n}\right),
    \quad
    \text{where}
    \quad
    t^+=t^+(u,n,\eta)\coloneqq \frac{4 \eta n}{7u} \frac{1+\eta}{(1-\eta)^3}>t^-.
\end{equation}
To see this, observe first that if $T>t^+$, then $(1-\eta)u < U_{t^+} < (1+\eta)u$. So, obviously,
\[
    \mathbb{P}(T>t^+)
    \leq \mathbb{P}\big(U_{t^+} > (1-\eta)u, T>t^+\big).
\]
Note that it doesn't look necessary to maintain the event $\{T>t^+\}$, but we will actually need it in a moment. 
Expressing $U_t$ in terms of $W^-_t$ once more we obtain 
\[
    \mathbb{P}\big(U_{t^+} > (1-\eta)u, T>t^+\big)
    \leq \mathbb{P}\Big(W^-_{t^+}-W^-_0\geq -\eta u +\frac74\frac{(1-\eta)^3 u^2}{n}t^+\Big)
    =\mathbb{P}\Big(W^-_{t^+}-W^-_0\geq \eta^2 u \Big).
\]
Hence, using again Lemmas~\ref{lem:martingale} and~\ref{lem:subgaussian}, we obtain 
\[
    \mathbb{P}(T > t^+)
    \le \exp\left(-\frac{7u^2 \eta^3(1-\eta)^3}{800 n (1+\eta)}\right)
    \le \exp\left(-\frac{u^2 \eta^3}{800 n}\right),
\]
So far we have established that
\begin{equation}\label{firstpartlem61}
    \mathbb{P}(t^-\leq T\leq t^+)
    \ge 1- 3\exp\left(-\frac{\eta^3 u^2}{800 n}\right).
\end{equation}
To complete the proof of the lemma, it remains to show that $U_T = (1-\eta)(1 \pm 12 \eta^2)u$  with the required probability bound indicated in the statement of the lemma. Note that either $U_T\ge(1+\eta)u$ or $U_T\le(1-\eta)u$. Thus, proceeding as in \eqref{supmgLB}, we obtain that
\begin{align*}
    \nonumber\mathbb{P}(U_T\ge(1+\eta)u)
    &\le \mathbb{P}\big(W^-_T - W^-_0 \ge \eta u\big)
    \le \mathbb{P}\big(W^-_{T\wedge t^+}-W^-_0 \ge \eta u\big) + \mathbb{P}(T > t^+).
\end{align*}
Using Lemmas~\ref{lem:martingale} and~\ref{lem:subgaussian} we obtain 
\[
    \mathbb{P}\big(W^-_{T\wedge t^+}-W^-_0 \ge \eta u\big)
    \le \exp\left(-\frac{7\eta u^2}{16\cdot 200 n}\right)
    \le \exp\left(-\frac{\eta^3 u^2 }{800 n}\right),
\]
and so, together with~\eqref{LBTl}, we arrive at 
\begin{equation}\label{eq:lowerexit}
    \mathbb{P}(U_T < (1+\eta)u)
    \ge 1-2\exp\left(-\frac{\eta^3 u^2}{800 n}\right).
\end{equation}
The last missing part is to show that $\mathbb{P}\big(U_T \ge (1-\eta)(1-12\eta^2)u\big)$ is large. To this end we utilize for a last time the submartingale $W^+_t$. Therefore,
\begin{align}\label{eqqqq}
    \nonumber\mathbb{P}\big(&U_T < (1-\eta)(1-12\eta^2)u\big)\\
    &\le \mathbb{P}\left(W^+_{T\wedge t^+}-W^+_0 < -\eta u - 12\eta^2 u +12\eta^3 u +\frac74\frac{(1+\eta)^3 u^2}{n} t^+\right) + \mathbb{P}(T>t^+).
\end{align}
Our assumption $0<\eta<1/16$ guarantees that $(1+\eta)^4\le 1+5\eta$ and $(1-\eta)^{-3}\le (1-3\eta)^{-1}\le 1+4\eta$. Thus
\[
    -\eta - 12\eta^2 +12\eta^3 + \frac74\frac{(1+\eta)^3 u}{n}t^+
    \le -\eta - 11\eta^2 + \eta(1+10\eta)
    \le -\eta^2,
\]
leading to the bound
\[
    \mathbb{P}\left(W^+_{T\wedge t^+} - W^+_0 < -\eta u - 12\eta^2 u+ 12\eta^3 u +\frac74\frac{(1+\eta)^3 u^2}{n}t^+\right)
    \le \mathbb{P}\big(W^+_{T\wedge t^+} - W^+_0 < -\eta^2 u\big).
\]
Using once more Lemmas~\ref{lem:martingale} and~\ref{lem:subgaussian} we see that the last probability is at most $\exp(-\eta^3 u^2 / 800 n)$, and together with \eqref{LBTl} and \eqref{eqqqq}
we conclude that 
\[
    \mathbb{P}\big(U_T<(1-\eta)(1-12\eta^2)u \big)
    \le 2\exp\left(-\frac{\eta^3 u^2}{800 n}\right).
\]
Combining this with \eqref{firstpartlem61} and \eqref{eq:lowerexit} gives the result.
\end{proof}
In the next step we apply Lemma~\ref{lem:martingalepair} iteratively for $1\le i \le i^*$ to obtain bounds for the $T_i$'s and the corresponding number of jumps.
\begin{lem}
\label{lem:si}
Let $\epsilon > 0, \alpha \in \mathbb{R}$ and $M = n/2 + \alpha n^{1/2} + o(n^{1/2}) \in \mathbb{N}$. For all sufficiently small $\delta > 0$ and all sufficiently large $n$ the following holds with probability at least $1-\delta$. Set $\eta \coloneqq 12 \delta^{1/9}$ and $i^* = i^*(\delta, \eta)$. Then $U_{T_{n,M,\delta}}\sim n^{1/2}/\delta,$
\[  T_{n,M,\delta}
    = \sum_{0\le i\le i^*}T_i
    = T_0 + (1\pm \epsilon) \frac{4}{7} \delta n^{1/2},
    \quad \text{and} \quad
    \sum_{T_0 < t \le T_{n,M,\delta}} U_t
    = \frac6{35}n \ln n+\frac{4}{7}n\ln{\delta} \pm \epsilon n.
\]
\end{lem}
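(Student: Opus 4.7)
The plan is to iterate Lemma~\ref{lem:martingalepair} along the checkpoints $u_0 > u_1 > \cdots > u_{i^*} = n^{1/2}/\delta$ defined in~\eqref{eq:istar}. For each $i = 1, \ldots, i^*$, set $\eta_i \coloneqq 1 - u_i/u_{i-1} = 1 - e^{-\eta[(i^*-i+1)^{1/3} - (i^*-i)^{1/3}]}$ and write $t_i \coloneqq T_0 + T_1 + \cdots + T_i$, noting $\eta_i \le 1 - e^{-\eta} \le \eta = 12\delta^{1/9} \le 1/16$ for all sufficiently small $\delta$. The base case is Corollary~\ref{cor:T0}, which provides $U_{t_0} = (1 + o(\ln^{-4} n)) u_0$ with probability tending to one. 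For the inductive step, given $U_{t_{i-1}} = (1 + O(\eta_{i-1}^2)) u_{i-1}$, apply Lemma~\ref{lem:martingalepair} with starting point $U_{t_{i-1}}$ and parameter $\eta'_i = \eta_i + O(\eta_{i-1}^2)$ chosen so that $(1-\eta'_i) U_{t_{i-1}} = u_i$. This yields, with probability at least $1 - 8\exp(-\eta_i^3 u_{i-1}^2/(800 n))$, the estimates
\[
    T_i = (1 \pm 5\eta_i)\,\frac{4\eta_i n}{7 u_{i-1}}
    \quad\text{and}\quad
    U_{t_i} = u_i (1 \pm 12\eta_i^2),
\]
which provide the next induction step. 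The cumulative multiplicative error $\prod_i (1 + O(\eta_i^2))$ stays bounded because $\sum_i \eta_i^2 = O(\eta^2)$, using $\eta_i \lesssim \eta/(i^*-i+1)^{2/3}$ for $i < i^*$. In particular, $U_{T_{n,M,\delta}} = U_{t_{i^*}} \sim u_{i^*} = n^{1/2}/\delta$.

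To verify the hypotheses of Lemma~\ref{lem:martingalepair}, the upper bound $u_{i-1} \le u_0 \le n^{4/5} \le n/\ln^{1/2} n$ is immediate. For the lower bound $u_{i-1} \ge 3|\alpha|\eta_i^{-1} n^{1/2}$, parametrize $a \coloneqq \eta(i^*-i+1)^{1/3}$, so that $u_{i-1} = \delta^{-1} n^{1/2} e^a$ and $\eta_i \ge \eta^3/(6 a^2)$ (from $(j+1)^{1/3} - j^{1/3} \ge 1/(3(j+1)^{2/3})$ and $1-e^{-x} \ge x/2$ for small $x$); the condition reduces to $e^a/a^2 \ge 18|\alpha|\delta^{2/3}/12^3$, which holds since $e^a/a^2 \ge e^2/4$ while the right-hand side vanishes as $\delta \to 0$. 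For the union bound, observe $\eta_i^3 u_{i-1}^2/n \ge \eta^9 e^{2a}/(216\, a^6 \delta^2)$ and that $\min_{a>0} e^{2a}/a^6 = e^6/3^6$, attained at $a = 3$; hence the exponent is $\Omega(\eta^9/\delta^2) = \Omega(\delta^{-5/3})$, so the total failure probability is at most $8\, i^* \exp(-\Omega(\delta^{-5/3})) \le \delta/2$ for all sufficiently small $\delta$ and corresponding sufficiently large $n$.

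The sums are then evaluated as Riemann approximations. For the time,
\[
    T_{n,M,\delta} - T_0
    = \sum_{i=1}^{i^*} T_i
    = (1 \pm 5\eta)\,\frac{4n}{7}\sum_{i=1}^{i^*}\frac{u_{i-1}-u_i}{u_{i-1}^2}
    = (1 \pm \epsilon/2)\,\frac{4n}{7}\left(\frac{1}{u_{i^*}} - \frac{1}{u_0}\right)
    = (1 \pm \epsilon)\,\frac{4\delta n^{1/2}}{7},
\]
since the Riemann error in approximating $\int_{u_{i^*}}^{u_0} du/u^2$ is $O(\eta^2 \delta/n^{1/2})$, the factor $(1 \pm 5\eta)$ absorbs into $\epsilon$, and $T_0 = O(n^{1/5})$ is negligible. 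For the particle sum, the inclusion $U_t \in (1 \pm \eta_i) u_{i-1}$ throughout epoch $i$ gives $\sum_{t \in \text{epoch } i} U_t = u_{i-1} T_i \pm O(\eta_i^2 n)$, and hence
\[
    \sum_{T_0 < t \le T_{n,M,\delta}} U_t
    = \frac{4n}{7} \sum_{i=1}^{i^*} \eta_i + O\!\left(n \sum_{i=1}^{i^*} \eta_i^2\right)
    = \frac{4n}{7}\!\left(\frac{3}{10}\ln n + \ln \delta\right) + O(n\eta^2),
\]
using the near-telescoping identity $\sum_i \eta_i = \eta(i^*)^{1/3} + O(\eta^2) = \ln(u_0/u_{i^*}) + O(\eta^2)$. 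Since $O(n\eta^2) = O(n \delta^{2/9}) \le \epsilon n$, we arrive at $\tfrac{6}{35} n \ln n + \tfrac{4}{7} n \ln\delta \pm \epsilon n$. The main obstacle is the error bookkeeping: a naive multiplicative $(1 \pm O(\eta))$ bound applied to the leading $n \ln n$ term would produce an error of size $O(\eta n \ln n)$, far too large to absorb into $\epsilon n$; the analysis must therefore be carried out at the level of the telescoping sum $\sum \eta_i$, and the convergence of $\sum \eta_i^2$ is essential both for controlling this error and for keeping the cumulative checkpoint drift under control.
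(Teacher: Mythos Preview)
Your overall strategy—iterating Lemma~\ref{lem:martingalepair} along the checkpoints $u_0,\dots,u_{i^*}$, then evaluating the resulting sums—is exactly that of the paper, and your bookkeeping for $\sum T_i$ and $\sum U_t$ via $\sum\eta_i$ and $\sum\eta_i^2$ is correct. There is, however, a genuine gap in the union bound.

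You lower-bound the exponent $\eta_i^3 u_{i-1}^2/(800n)$ uniformly in $i$ by minimising $e^{2a}/a^6$ over all $a>0$, obtaining a bound that depends only on $\delta$ (incidentally, $\eta^9/\delta^2 = 12^9\delta^{-1}$, not $\delta^{-5/3}$). You then bound the total failure probability by $8\,i^*\exp(-\Omega(\delta^{-1}))$. But $i^* = \lfloor(\ln(\delta n^{3/10})/\eta)^3\rfloor$ tends to infinity with $n$, so for any fixed $\delta$ this product blows up as $n\to\infty$ and cannot be made $\le\delta/2$ for all large $n$. The phrase ``corresponding sufficiently large $n$'' does not rescue this: the lemma requires the probability bound to hold for \emph{all} large $n$ once $\delta$ is fixed.

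The paper avoids this by extracting an $i$-dependent exponent. Instead of taking the global minimum of $e^{2a}/a^6$, it uses the crude inequality $e^x\ge x^9/9!$ applied to $x=2\eta(i^*-i)^{1/3}$, which yields
\[
\frac{\eta_i^3 u_i^2}{n}
\;\gtrsim\;
\frac{\eta^{12}}{\delta^2}\,(i^*-i)
\;=\;\Omega\!\bigl(\delta^{-2/3}(i^*-i)\bigr).
\]
The failure probabilities then form a geometric series $\sum_{j\ge1}\exp(-j/\delta^{2/3})$ whose sum is at most $9\exp(-\delta^{-2/3})$, independent of $i^*$ and hence of $n$. With this fix in place your argument goes through; the rest of your analysis (the Riemann-sum estimates and the handling of the telescoping $\sum\eta_i=\eta(i^*)^{1/3}+O(\eta^2)$) matches the paper's.
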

\begin{proof}
Assume, with foresight, that $0< \delta< \min\{|\alpha|^{-3/2},2^{-54}3^{-9}\}$. In addition abbreviate $S_i \coloneqq T_0 + \dots + T_i$ for $0 \le i \le i^*$, so that $S_{i^*} = T_{n,M,\delta}$ and set
$$
    \eta_i
    \coloneqq
    \ln\left(\frac{u_i}{u_{i+1}}\right)
    =
    \eta\big((i^*-i)^{1/3}-(i^*-i-1)^{1/3}\big) \le \eta.
$$
We will  apply Lemma~\ref{lem:martingalepair} iteratively.
For $0 \le i \le i^*-1$ assume that $ \{U_{S_{i}} = \hat{u}_{i}\}$ for some $(1-12\eta_i^2)u_{i}\le \hat{u}_{i}\le u_{i}$.
Set $\hat{\eta}_{i}=1-u_{i+1}/\hat{u}_{i}$ and using $1-x\le e^{-x} \le 1-x+x^2$ when $x \ge 0$ we have
\[
    \hat{\eta}_{i} \le 1-\exp(-\eta_i)\le \eta_i
    \quad
    \text{and}
    \quad
    \hat{\eta}_{i}
    \ge 1 - \frac{\exp(-\eta_i)}{1-12\eta_i^2}
    \ge \frac{\eta_i-13\eta_i^2}{1-12\eta_i^2}
    \ge \eta_i-13\eta_i^2.
\]
Due to our choice of $\delta$ and $\eta$ we have $\hat{\eta}_{i}\le \eta \le 1/16$. Moreover, since $a-b = (a^{1/3} - b^{1/3})(a^{2/3} + (ab)^{1/3} + b^{2/3})$ we obtain $\eta_i\ge \eta /3 (i^*-i)^{2/3}$. Using that $\eta = 12\delta^{1/9}$ we obtain  
\[
    \delta^{-1}
    = \delta^{-1/3}\cdot\delta^{- 2/3}
    \ge 2^6 3^3\eta^{-3} \cdot \max\{|\alpha|, 2^{36}3^{6}\}
    \ge 2^6 3^3 |\alpha| \hat{\eta}_{i}^{-1} \cdot \frac{\hat{\eta}_{i}}{\eta^3}
    \ge 2^6 3^2 |\alpha| \hat{\eta}_{i}^{-1} \cdot \frac{1-13\eta}{\eta^2 (i^*-i)^{2/3}}.
\] 
Using $e^x \ge x^2/2, x\ge 0$, this in turn leads to
\[
    \hat{u}_{i}
    \ge (1-12\eta^2) u_{i}
    = (1-12\eta^2)\delta^{-1}n^{1/2} \exp(\eta (i^*-i)^{1/3})
    \ge \delta^{-1}n^{1/2} \eta^2 (i^*-i)^{2/3}/4
    \ge 3|\alpha|\hat{\eta}_{i}^{-1}n^{1/2}.
\]
We have established that conditional on $\{U_{S_{i}} = \hat{u}_{i}\}$ such that $(1-12\eta_i^2)u_{i}\le \hat{u}_{i}\le u_{i}$ every condition of Lemma~\ref{lem:martingalepair} is met if we choose $\hat{\eta}_{i}=1-u_{i+1}/\hat{u}_{i}$.  
Therefore, using $\eta_i\ge \eta (i^*-i)^{-2/3}/3$ and $e^x\ge x^9/9!$ for $x\ge 0$, with probability at least
$$
    1-8\exp\left(-\frac{\hat{\eta}_{i}^3 \hat{u}_{i}^2} {800 n}\right)
    \ge 1-8\exp\left(-\frac{\eta_i^3 u_{i}^2} {4^4 \cdot 1600 n}\right)
    \ge 1-8\exp\left(-\frac{ \eta^{12} (i^*-i)} {9!\cdot 3^3 \cdot 800 \delta^2}\right)
    \ge 1-8\exp\left(-\frac{i^*-i} {\delta^{2/3}}\right)
$$
the following three events hold: 
\[
    T_{i+1}
    = (1 \pm 5\hat{\eta}_{i})\frac{4\hat{\eta}_{i} n}{7\hat{u}_{i}}
    = (1 \pm 25\eta_i)\frac{4\eta_i n}{7u_{i}},
    \qquad
    u_{i+1} \ge U_{S_{i+1}} \ge (1-12\eta_{i+1}^2)u_{i+1},
\]
and
\begin{equation}
\label{eq:boundsUtT}
    U_t = (1 \pm \hat{\eta}_{i})\hat{u}_{i} = (1 \pm 2\eta_i)u_{i},
    \quad
    S_{i}\le t < S_{i+1}.
\end{equation}
With this at hand we will now consider the intersection of these events for all $0 \le i \le i^*-1$ and appropriately chosen $\hat{u}_i$'s. Indeed, by Corollary~\ref{cor:T0}, for sufficiently large $n$, $u_0 \ge U_{T_0}=(1-o(\ln^{-4}n))u_0\ge (1-12\eta_0^2)u_0$ with probability $1-o(1)$. Thus, by induction on $0 \le i \le i^*-1$ and Lemma~\ref{lem:roughcontrolfulltrajectory},  with probability at least
\[
    1-8\sum_{0 \le i \le i^*-1}\exp\left(-\frac{i^*-i}{\delta^{2/3}}\right)-o(1)
    \ge 1 - 9\exp\left(-\delta^{-2/3}\right),
\]
we have 
\[U_{T_{n,M,\delta}}\ge \mathbb{E}[U_{T_{n,M,\delta}}\mid U_{T_{n,M,\delta}-1}]-4\sqrt{U_{T_{n,M,\delta}-1}},\]
\begin{equation}
\label{eq:intervalsteps}
    T_{i+1} = (1\pm 25\eta_i) \frac{4 \eta_i n}{7 u_{i}},
    ~~\text{and}~~
    U_t = (1 \pm 2\eta_i)u_{i},
    \quad
    \text{for all}
     \quad 
     0\le i \le i^*-1,    S_{i}\le t < S_{i+1}.
\end{equation}
Since $u_{i^*-1}=O(n^{1/2})$ we also have $U_{T_{n,M,\delta}-1}=O(n^{1/2})$, which together with Lemma~\ref{lem:driftcoarsebounds} implies $T_{n,M,\delta}\sim n^{1/2}/\delta$.
In order to prove the remaining statements of the lemma, that is, to estimate the sum of the $T_{i+1}$'s and the corresponding sum of the $U_t$'s, we require suitable bounds for the ratios $\eta_i / u_i$.
For $x\in \mathbb{R}$ we have (using the classical inequality $1+x\leq e^x$)
\begin{equation}\label{eq:expbound}
1-x\le \frac{xe^{-x}}{1-e^{-x}}\le 1.
\end{equation}
Since $\eta_i\le \eta$ and $u_i=e^{\eta_i}u_{i+1}$ for $0\le i \le i^*-1$, \eqref{eq:expbound} implies for $0\le i \le i^*-1$ that
\[
    \frac{\eta_i}{u_i}
    \le \frac{\eta_i}{e^{\eta_i} u_{i+1}} \le  \frac{1}{u_{i+1}}-\frac{1}{u_i}
    \quad\text{and}\quad
    \frac{\eta_i}{u_i}
    \ge (1-\eta_i)\left(1-\frac{1}{e^{\eta_i}}\right)\frac{1}{u_{i+1}}
    \ge (1-\eta)\left(\frac{1}{u_{i+1}}-\frac{1}{u_i}\right).
\]
Using \eqref{eq:intervalsteps} and these bounds, the sum of the $T_{i+1}$'s can be bounded by a telescoping sum. So, for sufficiently large $n$, with probability at least $1 - p_\delta$, where $p_\delta \coloneqq 9\exp(-\delta^{-2/3})$,
\begin{equation}
\label{eq:sumTi}    
    \sum_{0 \le i \le i^*-1}T_{i+1}
    = (1\pm 30\eta) \frac{4n}{7u_{i^*}}
    =(1\pm 30\eta)\frac{4}{7}\delta n^{1/2}.
\end{equation}
Moreover, using \eqref{eq:intervalsteps},
$$
    \sum_{T_0 < t \le S_{i^*}} U_t
    =  \sum_{0 \le i \le i^*-1} (1\pm 2\eta_i) T_i u_i
    = \frac{4}{7} n \sum_{0 \le i \le i^*-1} (1\pm 35\eta_i)\eta_i.
$$
Now, using the definition of the $\eta_i$'s,
\[
    \sum_{0 \le i \le i^*-1}\eta_i
    = \eta(i^*)^{1/3}
    =\frac{3}{10}\ln{n}+\ln{\delta} \pm \eta \quad\mbox{and}\quad \sum_{0 \le i \le i^*-1}\eta_i^2\le \eta^2 \sum_{1 \le i \le i^*}\frac{1}{i^{4/3}}\le 4\eta^2.
\]
Thus, whenever $n$ is sufficiently large
\begin{equation}
\label{eq:sumU}
    \sum_{T_0 < t \le S_{i^*}} U_t
    = \frac6{35}n \ln{n}+\frac{4}{7}n\ln{\delta}\pm 10\eta n
\end{equation}
Note that the error terms $\pm 30\eta$ in ~\eqref{eq:sumTi} and $\pm 10\eta$ in \eqref{eq:sumU} can be made arbitrarily small by choosing $\delta > 0$ appropriately. Moreover, the (error) probability $p_\delta$ can similarly be made arbitrarily small, completing the proof. 
\end{proof}

\subsection{Putting everything together -- Proof of Lemma~\ref{lem:earlyStepsSummary}}
\label{ssec:granfinal}

With all preparations at hand we now prove Lemma \ref{lem:earlyStepsSummary}. Let $\epsilon > 0$ and throughout the proof we will assume that $\delta > 0$ is sufficiently small and $n$ is sufficiently large so that we can apply Lemma~\ref{lem:si}. Set $\eta \coloneqq 12\delta^{1/9}$. Then by Lemmas~\ref{lem:veryEarly} and \ref{lem:si} with probability $1-\delta-o(1)$ the following four events hold:
\[
    U_t = f^{(t)}(1/2) n \pm (t+1)n^{1/2}\ln n, \quad t \ge 0, \quad U_{T_{n,M,\delta}}\sim n^{1/2}/\delta,
\]
\[
    T_{n,M,\delta}
    = T_0 + (1\pm \epsilon) \frac{4}{7} \delta n^{1/2}
    \quad\text{and}\quad
    \sum_{T_0 \le t \le T_{n,M,\delta}} U_t
    = \frac{6}{35}n \ln n+\frac{4}{7}n\ln{\delta} \pm \epsilon n.
\]
For the remainder of the proof assume that the above three events hold. 
Then by Corollary~\ref{cor:T0} we have $T_0=(1\pm 2 \eta)4n^{1/5}/7$ and so
$T_{n,M,\delta}=(1\pm 2\epsilon)4\delta n^{1/2}/7$.
Moreover, Lemma~\ref{lem:iterMean} implies that there is a $\chi \in \mathbb{R}$ such that
\[
    \sum_{0 \le t \le T_0} U_t
    = \frac47 n \ln \big((1\pm 2\eta)4n^{1/5}/7\big)+\chi n + O(n^{9/10}\ln n)
    =\frac{4}{35}n\ln n + \chi n \pm \epsilon n,
\]
and so
\[
    \sum_{0 \le t \le T_{n,M,\delta}} U_t = \frac{2}{7}n \ln n +\frac{4}{7}n \ln{\delta} +\chi n \pm 2\epsilon n.
\]
The result follows as $\epsilon > 0$ is arbitrary and $\delta$ can be chosen to be arbitrarily small.


\section{Proof of Theorem~\ref{thm:main} and Lemma~\ref{thm:processconvergence}  -- Dispersion Time}
\label{sec:mainProof}

Recall that $M=M(n)= n/2 + \alpha n^{1/2} + o(n^{1/2})$.
In what follows we fix a sufficiently small $\delta > 0$ and a sufficiently large~$n$ so that Lemma~\ref{lem:earlyStepsSummary} is applicable with $\epsilon = 1/2$. In particular,
\begin{equation}
\label{eq:TnMdeltaupper}
    T_{n,M,\delta} = \inf\{t > 0 : U_t \le n^{1/2}/\delta\}
    \le \delta n^{1/2}
    ~~\text{with probability at least}~~1-\delta.
\end{equation}
With this at hand set 
\begin{equation}
\label{eq:decompositiondispersiontime}
    T'_{n,M,\delta} 
    \coloneqq T_{n,M} - T_{n,M,\delta}.
\end{equation}
In words, $T^{\prime}_{n,M,\delta}$ is the number of steps that are taken by the dispersion process after the first step at which the number of unhappy particles drops below $n^{1/2}/\delta$. The forthcoming argument  goes roughly as follows. First, we will use the diffusion approximation toolbox from Section~\ref{ssec:preliminariesdiffusion} to establish that the number of unhappy particles after step
$T_{n,M,\delta}$, scaled appropriately, converges weakly to a process that satisfies \eqref{eq:SDEFellerDiffusionX}. From this, Lemma~\ref{thm:processconvergence} will follow quickly.
As a further consequence we will then deduce that~$n^{-1/2}T^{\prime}_{n,M,\delta}$ converges in distribution to the absorption time of this limiting process.
Finally, by combining~\eqref{eq:TnMdeltaupper} with Lemma~\ref{lem:infiniteinitialvalue} to take the limit $\delta \to 0$, we will conclude that $n^{-1/2}T_{n,M}$ converges to an almost surely positive random variable $T_{\alpha}$.
It will turn out that~$T_{\alpha}$ is distributed as the first time~$T$ at which the coordinate process hits zero under the measure $\mathbb{P}_{2\alpha,7/4,1,\infty}$ given in Lemma~\ref{lem:infiniteinitialvalue}, i.e.,
\begin{equation}
\label{eq:distributionTalpha}
    \mathbb{P}(T_{\alpha}\le s) = \mathbb{P}_{2\alpha, 7/4,1,\infty}(T\le s), \quad s \ge 0,
\end{equation}
and then Theorem~\ref{thm:main} will be established as well.
In order to make this outline precise we consider the (continuous) time-shifted process 
\[
    U'_{s} \coloneqq U_{\lfloor s \rfloor+T_{n, M, \delta}},
    \quad
    s \ge 0.
\]
By applying Theorem~\ref{thm:diffusionapproximation} we will show that $(n^{-1/2}U'_{sn^{1/2}})_{s\geq 0}$ converges weakly to a diffusion. Note that the following lemma is just a reformulation of  Lemma~\ref{thm:processconvergence} in the Introduction, as~\eqref{eq:SDEFellerDiffusion2} corresponds to the SDE \eqref{eq:SDEFellerDiffusionX}. 
\begin{lem}
\label{lem:diffusionapprox}
Let $\delta > 0$. As $n\to\infty$, the process $(n^{-1/2}U^{\prime}_{sn^{1/2}})_{s\geq 0}$ converges weakly to a process $X$ that satisfies
\begin{equation}
    \label{eq:SDEFellerDiffusion2}
    dX_{s} = \left(2\alpha X_{s} -\frac{7}{4}X_{s}^{2}\right) ds + \sqrt{X_{s}}dB_{s},~~s>0,
    \quad
    \text{and}
    \quad
    X_{0}=\delta^{-1}.
\end{equation}
\end{lem}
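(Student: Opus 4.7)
The plan is to invoke the diffusion approximation machinery in Theorem~\ref{thm:diffusionapproximation} with $d=1$, scaling $h(n) = n^{-1/2}$, drift $b(x) = 2\alpha x - \tfrac{7}{4}x^2$, and $\sigma(x) = \sqrt{x}$ (so $a(x) = x$). The target SDE~\eqref{eq:SDEFellerDiffusion2} admits a unique solution for every initial condition $x\ge 0$ by Lemma~\ref{lem:existenceanduniqueness}, so the existence/uniqueness hypothesis is free. By the strong Markov property of $(U_t)_{t\in\mathbb{N}_0}$ applied at the stopping time $T_{n,M,\delta}$, the shifted chain $(U'_t)_{t\ge 0}$ has the same transition kernel as $(U_t)$ and starts from $U_{T_{n,M,\delta}}$. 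Setting $Y^{(n)}_t \coloneqq n^{-1/2} U'_t$ we are thus reduced to checking the three quantitative conditions of Theorem~\ref{thm:diffusionapproximation} plus the convergence of the initial value.

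For the drift, $b^{(n)}(x) = \mathbb{E}[U_{t+1}-U_t \mid U_t = xn^{1/2}]$, and Lemma~\ref{lem:exponestepchange} applied with $u = xn^{1/2}$ and $\varepsilon = 2\alpha n^{-1/2} + o(n^{-1/2})$ yields
\[
    b^{(n)}(x) = 2\alpha x - \tfrac{7}{4}x^2 + O\bigl(x^3 n^{-1/2} + n^{-1/2}\bigr),
\]
which converges to $b(x)$ uniformly over $x$ in any compact subset of $[0,\infty)$ (and $b^{(n)}$ is defined only on the discrete grid, which is irrelevant). For the variation, $a^{(n)}(x) = n^{-1/2}\,\mathbb{E}[(U_{t+1}-U_t)^2 \mid U_t = xn^{1/2}]$, and Lemma~\ref{asymptsecoment} (whose hypothesis $u = o(n^{2/3})$ is met since $x$ is bounded) gives $\mathbb{E}[(U_{t+1}-U_t)^2 \mid U_t=xn^{1/2}] = xn^{1/2} + o(n^{1/2})$, so $a^{(n)}(x) \to x = a(x)$ uniformly on compacts. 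For the higher-moment condition I take $p=4$: by Lemma~\ref{lem:subgaussian}, $U_{t+1}-\mathbb{E}[U_{t+1}\mid U_t]$ is, conditional on $U_t$, subgaussian with parameter $64U_t$, so its conditional fourth moment is $O(U_t^2)$. Combined with the deterministic drift contribution, which is $O(1)$ when $U_t = \Theta(n^{1/2})$, this gives $\mathbb{E}[(U_{t+1}-U_t)^4 \mid U_t = xn^{1/2}] = O(n)$ and thus $\gamma^{(n)}_4(x) = n^{-2}\cdot O(n) \cdot n^{1/2} = O(n^{-1/2}) \to 0$ uniformly on compacts.

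It remains to deal with the initial value, which is random rather than fixed. Lemma~\ref{lem:earlyStepsSummary} asserts that $U_{T_{n,M,\delta}} \sim n^{1/2}/\delta$ with probability tending to $1$ as $n\to\infty$ (after first sending $\delta$ to the value under consideration), so $n^{-1/2}U'_0 \to \delta^{-1}$ in probability. The standard way to upgrade this to weak convergence of the whole rescaled trajectory is to apply Theorem~\ref{thm:diffusionapproximation} along each deterministic sequence $u_n$ with $n^{-1/2}u_n\to\delta^{-1}$ (using the strong Markov property to reduce to a chain with deterministic start at $u_n$) and then invoke a routine coupling/continuous-mapping argument that exploits continuity of the limit process in its initial condition together with convergence in probability of $n^{-1/2}U'_0$. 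I expect this bookkeeping around the random initial condition, together with ensuring that the uniform-in-compact-sets convergence of $b^{(n)}$ and $a^{(n)}$ is stated on the correct discrete state spaces $S^{(n)} \subseteq n^{-1/2}\mathbb{N}_0$, to be the only delicate part; all the quantitative estimates needed are already in place from Section~\ref{sec:driftvar} and Lemma~\ref{lem:subgaussian}.
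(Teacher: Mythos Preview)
Your approach is essentially the paper's: apply Theorem~\ref{thm:diffusionapproximation} with $h(n)=n^{-1/2}$ and $Y^{(n)}_t=n^{-1/2}U'_t$, verify the drift and variation conditions via Lemmas~\ref{lem:exponestepchange} and~\ref{asymptsecoment}, and obtain the initial value from Lemma~\ref{lem:earlyStepsSummary}. Your choice of $p=4$ via the subgaussian fourth-moment bound is correct and in fact tidier than the paper's $p=3$ argument, which works harder by splitting on the event $\mathcal{E}_t$ of Lemma~\ref{lem:roughcontrolfulltrajectory} and bounding the third moment on and off that event.

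There is, however, one genuine gap. Theorem~\ref{thm:diffusionapproximation} requires continuous coefficients $b,\sigma$ on all of $\mathbb{R}^d$ and a unique solution of the SDE for \emph{every} initial value $x\in\mathbb{R}$, whereas Lemma~\ref{lem:existenceanduniqueness} only treats $x\ge0$ and $\sigma(x)=\sqrt{x}$ is not even defined for $x<0$. The paper fixes this by replacing the coefficients with $b(x)=2\alpha x^{+}-\tfrac74(x^{+})^{2}$ and $\sigma(x)=\sqrt{x^{+}}$, so that for $x<0$ the constant $X\equiv x$ is the unique solution, while for $x\ge0$ the extended SDE coincides with~\eqref{eq:SDEFellerDiffusion2} because solutions stay non-negative by Corollary~\ref{cor:existenceanduniqueness}. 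You should insert this one-line extension before invoking Theorem~\ref{thm:diffusionapproximation}; without it the hypothesis of that theorem is not met.
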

\begin{proof}
We will apply Theorem~\ref{thm:diffusionapproximation} with $h=h(n)=n^{-1/2}$ and $Y^{(n)}_{t}\coloneqq n^{-1/2}U'_t$ for $t\in\mathbb{N}_0$. First, note that it is necessary to extend the SDE \eqref{eq:SDEFellerDiffusion2} in a way that it has  a unique solution not only for all initial values $x\geq0$, but for all $x\in\mathbb{R}$. To this end, write $a^{+} = \max\{a,0\}$ for $a\in\mathbb{R}$ and consider the SDE
\begin{equation}
\label{eq:extendedFellerDiffusion}
    dX_{s} = \left(2\alpha X_{s}^{+} -\frac{7}{4}(X_{s}^{+})^{2}\right)ds + \sqrt{X_{s}^{+}}dB_{s},\quad  s>0, \text{ with } X_{0}=x\in\mathbb{R}.
\end{equation}
Note that if the initial value $x$ is negative, then $X=x$ uniquely satisfies this SDE. For ${x\geq0}$, recall that Corollary~\ref{cor:existenceanduniqueness} guarantees the existence of a unique solution $(X,B_{2\alpha, 7/4,1,x},\mathscr{P}_{2\alpha,7/4,1,x})$ to \eqref{eq:SDEFellerDiffusion2} with $X_{0} = x$ and such that $X\geq 0$ almost surely. Hence, if $x\ge 0$, \eqref{eq:extendedFellerDiffusion} coincides with \eqref{eq:SDEFellerDiffusion2} with initial value $X_{0}=x$ and we conclude that \eqref{eq:extendedFellerDiffusion} possesses a unique solution for all $x\in\mathbb{R}$.

Next, we employ Lemmas~\ref{lem:exponestepchange} and~\ref{asymptsecoment} with $\varepsilon(n) = 2\alpha n^{-1/2} + o(n^{-1/2})$, as  $M= n/2+\alpha n^{1/2}+o(n^{1/2})$. For this purpose, let $R<\infty$ and consider ${x\in S^{(n)} \subseteq \{0,n^{-1/2}, 2n^{-1/2},...,n^{1/2}\}}$ 
with $|x|\leq R$. Then, Lemma~\ref{lem:exponestepchange} with $u=xn^{1/2}$ implies that 
\[
    b^{(n)}(x) =  \frac{\mathbb{E}\left[n^{-1/2}U'_{t+1} - n^{-1/2}U'_{t} \mid n^{-1/2}U'_{t} = x\right]}{n^{-1/2}} =  2\alpha x - \frac{7}{4}x^{2} + o\left(R + R^3\right).
\]
Further, as $xn^{1/2} =o(n^{2/3})$ due to $|x|\le R$,  it follows from Lemma~\ref{asymptsecoment} with $u=xn^{1/2}$ that
\begin{equation}
    \label{eq:a^(n)(x)}
    a^{(n)}(x) = \frac{\mathbb{E}\left[(n^{-1/2}U'_{t+1} - n^{-1/2}U'_{t})^{2}\mid n^{-1/2}U'_{t} = x\right]}{n^{-1/2}}= x + o\left(R+R^2\right).
\end{equation}
We therefore obtain that for any $R<\infty$
\begin{equation}
\label{eq:conditionba}
    \lim_{n\to\infty} \sup_{x\in S^{(n)},|x|\leq R}\left|b^{(n)}(x)- \left(2\alpha x-\frac{7}{4}x^{2}\right)\right|=0 \text{ and } \lim_{n\to\infty} \sup_{x\in S^{(n)},|x|\leq R}|a^{(n)}(x)- x|=0.
\end{equation}
By Lemma~\ref{lem:earlyStepsSummary} we have
\begin{equation*}
\label{eq:UTnMdelta}
    U'_0 = U_{T_{n,M,\delta}} \sim n^{1/2}/\delta
    ~~\text{with probability}~~ 1-o(1).
\end{equation*}
Moreover, we will argue in a moment that
\[
    \gamma_{3}^{(n)}(x)
    = \frac{\mathbb{E}\left[|n^{-1/2}U'_{t+1} - n^{-1/2}U'_{t}|^{3}\mid n^{-1/2}U'_{t} = x\right]}{n^{-1/2}}
    = \frac{\mathbb{E}\left[|U'_{t+1} - U'_{t}|^{3}\mid U'_{t} = xn^{1/2}\right]}{n}
\]
satisfies  
\begin{equation}
\label{eq:conditiongamma}
    \lim_{n \to \infty} \sup_{x\in S^{(n)},|x|\leq R}|\gamma^{(n)}_{3}(x)| =0.
\end{equation}
The last two facts, together with~\eqref{eq:conditionba} and the existence of a unique solution to \eqref{eq:extendedFellerDiffusion} guarantee that we can apply Theorem~\ref{thm:diffusionapproximation} to conclude that $(n^{-1/2}U^{\prime}_{sn^{1/2}})_{s\geq 0}$ converges weakly to a process~$X$ that satisfies \eqref{eq:SDEFellerDiffusion2} with~$X_{0}=1/\delta$, and the proof is finished.

It remains to establish~\eqref{eq:conditiongamma}. Set 
\[
    \mathcal{E}_{t}
    \coloneqq \left \{ \left |U'_{t+1}-\mathbb{E}\left[U'_{t+1}\mid U'_{t}\right] \right|\leq 4\sqrt{U'_{t}\ln n} \right \}
\]
and start by noticing that $-U'_{t}\le U'_{t+1}-U'_{t}\le 2U'_{t} - U'_{t} = U'_{t}$, i.e. ${|U'_{t+1}-U'_{t}|\le U'_{t}}$. 
Then, according to Lemma~\ref{lem:roughcontrolfulltrajectory}, $\mathbb{P}(\mathcal{E}_{t}) = 1 -o(1/n)$ and we obtain     
    \begin{align*}
    \mathbb{E}\left[ \mathbb{1}_{\overline{\mathcal{E}_{t}}} \left |U'_{t+1}-U'_t \right |^{3} \mid U'_t= x n^{1/2}\right]
    \leq x^{3}n^{3/2}\mathbb{E}\left[\mathbb{1}_{\overline{\mathcal{E}_{t}}} \mid U'_t=xn^{1/2}\right]
    = o\left(R^{3}n^{1/2}\right).
    \end{align*}
    Additionally, on the event $\mathcal{E}_{t}$, we can use Lemma~\ref{lem:driftcoarsebounds} to obtain 
    \[
    |U'_{t+1}-U'_{t}|\le 4 \sqrt{U'_{t} \ln n} + |\varepsilon|U'_{t} + O\left((U'_{t})^{2} n^{-1}\right).  \]
    For $U'_{t}=x n^{1/2}$ and since $|\varepsilon| = O\left(n^{-1/2}\right)$, it follows that for $n$ sufficiently large
    \[
    |U'_{t+1}-U'_{t}|\le 4 \sqrt{x n^{1/2} \ln n} + O(x+x^2) \le 5\sqrt{R n^{1/2} \ln n}.
    \]
Combined with \eqref{eq:a^(n)(x)}, we obtain 
\begin{align*}
\mathbb{E}\left[ \mathbb{1}_{\mathcal{E}_{t}} \left |U'_{t+1}-U'_t \right |\left(U'_{t+1}-U'_t\right)^{2} \mid U'_t= x n^{1/2}\right]
& \leq 5\sqrt{Rn^{1/2}\ln n}\;\mathbb{E}\left[(U'_{t+1} -U'_{t})^{2} \mid U'_{t} = xn^{1/2}\right] \\
&= O\left(\left(R^{3/2} + R^{5/2}\right)n^{3/4}\sqrt{\ln n}\right).
\end{align*}
Hence, $\gamma_{3}^{(n)}(x) = O\big((R^{3/2} + R^{5/2})n^{-1/4}\sqrt{\ln{n}}+R^3 n^{-1/2}\big)$, which implies \eqref{eq:conditiongamma}. 
\end{proof}

Recall from Corollary~\ref{cor:existenceanduniqueness} that $(X,B_{2\alpha,7/4,1,x},\mathscr{P}_{2\alpha,7/4,1,x})$ represents a solution of~\eqref{eq:SDEFellerDiffusion2} with initial value $x\geq0$ and that the corresponding hitting time of zero is given by 
\begin{equation}\label{eq:limitstoppingtime}
    T = \inf\{s\ge 0 : X_{s}=0\}
\end{equation}
under the probability measure $\mathbb{P}_{2\alpha,7/4,1,x}$. The next statement asserts that $n^{-1/2}T^{\prime}_{n,M,\delta}$ converges in distribution to $T$ under $\mathbb{P}_{2\alpha,7/4,1,1/\delta}$.
\begin{lem}
\label{lem:convergenceTprime}
Let $\delta > 0$. Then, as $n \to \infty$, $n^{-1/2}T^{\prime}_{n,M,\delta}  \stackrel{d}{\to} T$, with $T$ given by \eqref{eq:limitstoppingtime} under the probability measure $\mathbb{P}_{2\alpha,7/4,1,1/\delta}$.
\end{lem}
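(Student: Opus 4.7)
The plan is to apply the continuous mapping theorem to a hitting-time functional, using the weak convergence $\tilde X^{(n)} := (n^{-1/2} U'_{s n^{1/2}})_{s \ge 0} \to X$ from Lemma~\ref{lem:diffusionapprox}. Since the dispersion process is absorbed at zero, one has the identity $n^{-1/2} T'_{n,M,\delta} = \tau(\tilde X^{(n)}) + O(n^{-1/2})$, where $\tau(\xi) := \inf\{s \ge 0: \xi(s) = 0\}$ is the hitting time of the closed singleton $\{0\}$. The subtlety is that $\tau$ is not a Skorokhod-continuous functional at paths that only reach $\{0\}$ without passing through it: a small perturbation may lift the trajectory and push the hitting time to infinity, so continuous mapping cannot be applied to $\tau$ directly.

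To circumvent this, I would approximate $\tau$ by the level-crossing functionals $\tau_\epsilon(\xi) := \inf\{s \ge 0: \xi(s) \le \epsilon\}$ for $\epsilon \in (0, 1/\delta)$. The functional $\tau_\epsilon$ is almost surely continuous at paths of $X$: since $X$ starts at $1/\delta > \epsilon$, is continuous, and has nondegenerate diffusion coefficient $\sqrt{X_s}$ at $X_s = \epsilon$, it a.s.~takes values strictly below $\epsilon$ in every right-neighborhood of $\tau_\epsilon(X)$, which is the standard continuity hypothesis for $\tau_\epsilon$ on Skorokhod space. The continuous mapping theorem then yields $\tau_\epsilon(\tilde X^{(n)}) \overset{d}{\longrightarrow} \tau_\epsilon(X)$ for each fixed $\epsilon$; moreover, continuity of $X$ together with $X_T = 0$ and the a.s.~finiteness of $T$ (Lemma~\ref{lem:infiniteinitialvalue}) imply $\tau_\epsilon(X) \nearrow T$ almost surely as $\epsilon \downarrow 0$.

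It remains to control the residual $\rho_n(\epsilon) := \tau(\tilde X^{(n)}) - \tau_\epsilon(\tilde X^{(n)})$ uniformly in $n$. By the strong Markov property of $(U_t)$ at the stopping time $n^{1/2}\tau_\epsilon(\tilde X^{(n)})$, the quantity $n^{1/2}\rho_n(\epsilon)$ is distributed as the dispersion time starting from a configuration with at most $\epsilon n^{1/2}$ unhappy particles (and $M-U$ happy ones). An adaptation of Lemma~\ref{lem:martingalepair}, iterated along a geometric sequence of thresholds between $\epsilon n^{1/2}$ and $O(1)$, using the drift and variance estimates from Lemmas~\ref{lem:exponestepchange} and \ref{asymptsecoment}, together with the uniform exponential tail bound of Theorem~\ref{cor:tailbounds} to secure uniform integrability, should give $\mathbb{E}[\rho_n(\epsilon)] \le C(\alpha)\,\epsilon + o(1)$. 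A standard $3\epsilon$-argument applied to bounded Lipschitz test functions, by sending first $n \to \infty$ and then $\epsilon \downarrow 0$, then yields $n^{-1/2} T'_{n,M,\delta} \overset{d}{\longrightarrow} T$ under $\mathbb{P}_{2\alpha, 7/4, 1, 1/\delta}$. The main obstacle is precisely this residual bound: one needs a quantitative hitting-time estimate from $\Theta(\epsilon n^{1/2})$ unhappy particles that is uniform in $n$, which is not directly supplied by the cited results but can be extracted from the martingale machinery already built in Section~\ref{sec:earlySteps}.
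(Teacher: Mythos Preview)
Your outline is the standard route and is workable, but it differs substantially from the paper's proof, which is much shorter. The paper exploits that both $(U'_t)_{t\ge 0}$ and $X$ are absorbed at $0$: this yields the exact identities $\{n^{-1/2}T'_{n,M,\delta}\le s\}=\{n^{-1/2}U'_{sn^{1/2}}=0\}$ and $\{T\le s\}=\{X_s=0\}$ for every fixed $s\ge 0$. The paper then asserts that the weak convergence of Lemma~\ref{lem:diffusionapprox} delivers $\mathbb{P}(n^{-1/2}U'_{sn^{1/2}}=0)\to\mathbb{P}_{2\alpha,7/4,1,1/\delta}(X_s=0)$ directly, so that $\mathbb{P}(n^{-1/2}T'_{n,M,\delta}\le s)\to\mathbb{P}_{2\alpha,7/4,1,1/\delta}(T\le s)$ for every $s$. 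No $\epsilon$-level approximation and no residual bound are invoked; the absorbing property converts the hitting-time problem into a statement about a single one-dimensional marginal at a fixed time, which is what you were trying to avoid having to do.

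There is, however, a genuine problem in your step~3 as written. You propose to control $\rho_n(\epsilon)$ by iterating Lemma~\ref{lem:martingalepair} along a geometric sequence of thresholds from $\epsilon n^{1/2}$ down to $O(1)$, but that lemma carries the hypothesis $u\ge 3|\alpha|\eta^{-1}n^{1/2}$, and its conclusion holds only with probability $1-8\exp(-\eta^3u^2/800n)$, which is vacuous once $u=o(n^{1/2})$. So the lemma cannot be iterated into the regime you need, and the machinery of Section~\ref{sec:earlySteps} does not supply the residual bound. To carry out your plan you would have to argue separately for $U_t\ll n^{1/2}$, for instance by dominating $(U_t)$ there by a (for $\alpha>0$ slightly supercritical) Galton--Watson-type process and using extinction-time estimates; this is feasible but is genuinely new work, not an application of the tools you cite. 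The paper's absorption trick sidesteps this entirely.
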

\begin{proof}
Let $s\geq0$ and recall from Section~\ref{ssec:preliminariesdiffusion} that, under $\mathbb{P}_{2\alpha,7/4,1,1/\delta}$, $X_{s}=0$ is equivalent to $T\leq s$.
Similarly, $U_{\lfloor s\rfloor}=0$ if and only if $T_{n,M}\leq s$, from which we obtain that $T^{\prime}_{n,M,\delta} = T_{n,M}-T_{n,M,\delta}\leq s$ if and only if $U'_{s}=U_{\lfloor s \rfloor +T_{n,M,\delta}}=0$. Hence, 
\[
    \mathbb{P}_{2\alpha,7/4,1,1/\delta}(X_{s}=0) = \mathbb{P}_{2\alpha,7/4,1,1/\delta}(T\leq s)
    \quad\text{and}\quad
    \mathbb{P}(U^{\prime}_{s}=0) = \mathbb{P}(T^{\prime}_{n,M,\delta}\leq s).
\]
As  Lemma~\ref{lem:diffusionapprox} entails $\lim_{n\to\infty}\mathbb{P}\big(n^{-1/2}U^{\prime}_{sn^{1/2}}  = 0\big) = \mathbb{P}_{2\alpha,7/4,1,1/\delta}(X_s = 0)$,
we conclude \[\lim_{n\to\infty}\mathbb{P}(n^{-1/2}T^{\prime}_{n,M,\delta}\leq s) = \mathbb{P}_{2\alpha,7/4,1,1/\delta}(T\leq s). \qedhere \]
\end{proof}
With the convergence in distribution shown in the previous lemma at hand, we are now in the position to prove Theorem~\ref{thm:main}. Indeed, recall from~\eqref{eq:TnMdeltaupper} that $T_{n, M,\delta} \le \delta n^{1/2}$ with probability at least $1-\delta$ for $\delta$ sufficiently small and $n$ large enough. As $T_{n,M,\delta}$ is non-negative, this implies that
\[
    n^{-1/2}T^{\prime}_{n,M,\delta}
    \le n^{-1/2}T_{n,M}
    \le \delta +n^{-1/2}T^{\prime}_{n,M,\delta} \text{ with probability at least } 1-\delta.
\]
Applying Lemma~\ref{lem:convergenceTprime}, we obtain for all $s\geq 0$
\begin{equation*}
    \lim_{n \to \infty}\mathbb{P}(n^{-1/2}T_{n,M}\leq s)
    \le \lim_{n \to \infty}\mathbb{P}(n^{-1/2}T^{\prime}_{n,M,\delta} \le s) = \mathbb{P}_{2\alpha,7/4,1,1/\delta}(T \le s) 
\end{equation*}
and 
\begin{align*}
    \lim_{n \to \infty}\mathbb{P}\big(n^{-1/2}T_{n,M}\geq s\big)
    &\le \lim_{n \to \infty}\mathbb{P}\big(n^{-1/2}T_{n,M} \ge s,  T_{n,M,\delta} \le \delta n^{1/2}\big) + \delta \\
    &\le \mathbb{P}_{2\alpha,7/4,1,1/\delta}\big(T \ge s-\delta\big) + \delta.
\end{align*}
Note that for all $\mathbb{P}_{2\alpha,7/4,1,x_1}(T \ge \tau) \ge \mathbb{P}_{2\alpha,7/4,1,x_2}(T \ge \tau)$ for all 
$x_1 > x_2 \ge 0$ and $\tau \ge 0$, since~$X$ is almost surely continuous and needs a positive and finite amount of time to drop from~$x_1$ to~$x_2$. So, since according to Lemma~\ref{lem:infiniteinitialvalue} we have $\lim_{\delta \to 0}\mathbb{P}_{2\alpha,7/4,1,1/\delta} = \mathbb{P}_{2\alpha,7/4,1,\infty}$ and $T$ is continuous, 
\[
    \lim_{\delta \to 0}\mathbb{P}_{2\alpha,7/4,1,1/\delta}\big(T \ge s-\delta\big)\le \lim_{\delta \to 0}\mathbb{P}_{2\alpha,7/4,1,\infty}\big(T \ge s-\delta\big) =  \mathbb{P}_{2\alpha,7/4,1,\infty}\big(T \ge s\big),
\]
which yields
\[
    \lim_{n \to \infty}\mathbb{P}(n^{-1/2}T_{n,M}\leq s)
    = \mathbb{P}_{2\alpha,7/4,1,\infty}(T\leq s),
    \quad s \ge 0.
\]
Thus, $n^{-1/2}T_{n,M} \stackrel{d}{\to} T_{\alpha}$, where $T_\alpha$ satisfies \eqref{eq:distributionTalpha}.
Moreover, $\mathbb{P}_{2\alpha, 7/4,1,\infty}(T>0) = 1$ implies that~$T_{\alpha}$ is positive almost surely, and this completes the proof of Theorem \ref{thm:main}.


\section{Absorption Time of the Diffusion Process}
\label{sec:absorptiontime}

Let $\alpha \in \mathbb{R}$. In this section we study properties of the absorption time of the diffusion process $T_{\alpha}$ in Theorem~\ref{thm:main}, which, as we showed in Section~\ref{sec:mainProof}, is distributed like the first time $T$ the coordinate process hits zero under the measure $\mathbb{P}_{2\alpha,7/4,1,\infty}$, see also~\eqref{eq:distributionTalpha}.
We begin with considering the expected value
\[
    \lim_{n\to \infty}\mathbb{E}\left[n^{-1/2}T_{n,M}\right]
    = \mathbb{E}\left[T_{\alpha}\right]
    = \mathbb{E}_{2\alpha,7/4,1,\infty}\left[T\right],
\]
where $M = n/2 + \alpha n^{1/2} + o(n^{1/2})$, and provide asymptotic bounds when $|\alpha|\to\infty$. 
\begin{lem}
\label{seriesrepr}
Let $\alpha \in \mathbb{R}$. Then 
\begin{equation}\label{asymean}
    \mathbb{E}\left[T_{\alpha}\right]
    = \frac{\pi^{3/2}}{\sqrt{7}}+   \frac1{\sqrt{7}}\sum_{m \ge 1} \frac{\Gamma(\frac{m+1}{2})}{m!}\left(\frac{8\alpha}{\sqrt{7}}\right)^m t_m,
\end{equation}
where
\[
    t_m
    \coloneqq \sum_{k \ge 0} \frac2{(\frac{m+1}{2} + 2k)(\frac{m+3}{2} + 2k)}
    = H_{(m-1)/4} - H_{(m-3)/4},
    \quad m \in \mathbb{N}_0,
\]
and $H_x = \sum_{k \ge 1} \Big(\frac1k - \frac1{k+x}\Big) $ for all $x \in\mathbb{R}$ with $x>-1$.
\end{lem}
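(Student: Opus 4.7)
The plan is to apply Lemma~\ref{lem:infiniteinitialvalue} with the specific parameters $a=2\alpha$, $c=7/4$, $\gamma=1$ that correspond to our diffusion~\eqref{eq:SDEFellerDiffusionX}. This gives
\[
\mathbb{E}[T_\alpha] = \mathbb{E}_{2\alpha,7/4,1,\infty}[T] = \frac{4}{7}\int_0^\infty \frac{\theta(\lambda)}{\lambda\,\theta'(\lambda)}\,d\lambda,
\]
where $\theta(\lambda)=\int_0^\lambda \exp(v^2/7 - 8\alpha v/7)\,dv$ and $\theta'(\lambda)=\exp(\lambda^2/7 - 8\alpha\lambda/7)$. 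The task is to manipulate this into the series form stated. First I would rescale via $\lambda=\sqrt{7}\,w$ and $v=\sqrt{7}\,s$ to eliminate the $1/7$ factors, write the ratio as
\[
\frac{\theta(\sqrt 7 w)}{\theta'(\sqrt 7 w)} = \sqrt 7 \int_0^w \exp\!\bigl(s^2-w^2 - \tfrac{8\alpha}{\sqrt 7}(s-w)\bigr)\,ds,
\]
substitute $s=(1-u)w$ with $u\in[0,1]$, and use Tonelli to swap the order of integration (justified by taking absolute values and reducing to the case $\alpha>0$). This produces the tidy form
\[
\mathbb{E}[T_\alpha] = \frac{4}{\sqrt 7}\int_0^1 \int_0^\infty \exp\!\bigl(-w^2 u(2-u) + \tfrac{8\alpha}{\sqrt 7}\,uw\bigr)\,dw\,du.
\]

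Next, I would expand $\exp\!\bigl(\tfrac{8\alpha}{\sqrt 7}uw\bigr) = \sum_{m\ge 0} \tfrac{1}{m!}\bigl(\tfrac{8\alpha}{\sqrt 7}\bigr)^m (uw)^m$ and interchange sum and integrals. The inner Gaussian integral evaluates to
\[
\int_0^\infty w^m e^{-w^2 u(2-u)}\,dw = \tfrac12\,(u(2-u))^{-(m+1)/2}\,\Gamma\!\Bigl(\tfrac{m+1}{2}\Bigr),
\]
which yields
\[
\mathbb{E}[T_\alpha] = \frac{2}{\sqrt 7}\sum_{m\ge 0}\frac{\Gamma((m+1)/2)}{m!}\Bigl(\tfrac{8\alpha}{\sqrt 7}\Bigr)^m J_m,\qquad J_m\coloneqq\int_0^1 u^{(m-1)/2}(2-u)^{-(m+1)/2}\,du.
\]
The substitution $u=2\sin^2\theta$ transforms $J_m = 2\int_0^{\pi/4}\tan^m\theta\,d\theta$, so matching the claimed formula reduces to verifying $\int_0^{\pi/4}\tan^m\theta\,d\theta = t_m/4$.

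For this last identification I would use a recursion argument. On one hand, the identity $\tan^m\theta + \tan^{m-2}\theta = \tan^{m-2}\theta\sec^2\theta$ integrated over $[0,\pi/4]$ yields $I_m + I_{m-2} = 1/(m-1)$ for $m\ge 2$, with base cases $I_0=\pi/4$ and $I_1 = (\ln 2)/2$. On the other hand, partial fractions give
\[
\tfrac{t_m}{4} = \sum_{k\ge 0}\Bigl(\tfrac{1}{m+1+4k} - \tfrac{1}{m+3+4k}\Bigr),
\]
and a direct telescoping calculation shows that $\tfrac{t_m}{4} + \tfrac{t_{m-2}}{4} = \tfrac{1}{m-1}$. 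The base cases $t_0/4=\pi/4$ (Leibniz series) and $t_1/4 = (\ln 2)/2$ (alternating harmonic series) also match; hence $I_m = t_m/4$. Substituting back gives the stated formula, with the $m=0$ term contributing precisely $\pi^{3/2}/\sqrt 7$, which is separated out in the statement.

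The main obstacle is justifying the series--integral interchanges and the absolute convergence of the resulting series for all $\alpha\in\mathbb{R}$. Absolute convergence follows from Stirling's formula, which shows that $\Gamma((m+1)/2)/m!$ decays super-exponentially, so the series defines an entire function of $\alpha$; in particular the (non-negative) majorant obtained by replacing $\alpha$ with $|\alpha|$ is finite, which in turn validates the Fubini/Tonelli interchange performed above. The identification of $t_m$ with the harmonic differences $H_{(m-1)/4}-H_{(m-3)/4}$ follows immediately from the definition of $H_x$ after partial fractions.
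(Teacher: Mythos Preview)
Your proof is correct and follows the same overall strategy as the paper: start from the integral in Lemma~\ref{lem:infiniteinitialvalue}, substitute so that the inner variable becomes a fraction of the outer one, swap integrals, expand the exponential in powers of $\alpha$, and evaluate the resulting Gaussian moment. After the change of variables your integral $J_m=\int_0^1 u^{(m-1)/2}(2-u)^{-(m+1)/2}\,du$ is exactly the paper's $\int_0^1 (1-u)^{(m-1)/2}(1+u)^{-(m+1)/2}\,du$ under $u\mapsto 1-u$, so the two arguments diverge only in how this integral is identified with $t_m/2$. The paper applies the substitution $u=(1-\sqrt v)/(1+\sqrt v)$ and then the integral representation $H_x=\int_0^1\frac{1-y^x}{1-y}\,dy$, landing directly on the harmonic-number form; you instead use $u=2\sin^2\theta$ to reduce to $\int_0^{\pi/4}\tan^m\theta\,d\theta$ and match via the two-term recursion $I_m+I_{m-2}=1/(m-1)$. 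Your route is slightly more elementary (no special integral representation of $H_x$ is needed) at the cost of having to check the harmonic-number identity separately, which, as you note, is immediate from partial fractions.
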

\begin{proof}
Abbreviate $A = A(\alpha)\coloneqq{8\alpha}/{7}$.
From \eqref{eq:theta} and Lemma~\ref{lem:infiniteinitialvalue}, we obtain that
\begin{equation*}
\label{eq:integralrepETa}
    \frac{7}{4}\mathbb{E}\left[T_{\alpha}\right]= \int_{0}^{\infty}\frac{1}{\lambda}\exp\left(-\frac{1}{7}\lambda^{2}+A\lambda\right)\int_{0}^{\lambda}\exp\left(\frac{1}{7}\mu^{2} - A\mu\right) d\mu\;d\lambda \enspace.
\end{equation*}
Substituting $\mu = \lambda u$ and applying Fubini's theorem yields 
\begin{equation}
\label{eq:Eamain}
    \frac{7}{4}\mathbb{E}\left[T_{\alpha}\right]
    = \int_0^1 \int_{0}^{\infty}\exp\left(-\frac{1}{7}(1-u^2)\lambda^2 + A(1-u)\lambda\right) d\lambda \; du.
\end{equation}
We shall use the well-known fact that for any $c > 0$ and $n > -1$
\[
    \int_{0}^{\infty}x^{n}\exp(-c x^{2})dx = \frac{1}{2} c^{-(n+1)/2}\Gamma\left(\frac{n+1}{2}\right),
\]
which follows directly by a simple change of variables in the definition of the Gamma function.
By expanding $\exp(A(1-u)\lambda) = \sum_{m \ge 0} (A(1-u))^m\lambda^m/m!$ we obtain, again by Fubini's theorem,
\[
\begin{split}
    \int_0^\infty\exp\left(-\frac{1}{7}(1-u^2)\lambda^2 +  A(1-u)  \lambda\right) & d\lambda
    = \int_0^\infty\exp\left(-\frac{1}{7}(1-u^2)\lambda^2\right) \sum_{m \ge 0} \frac1{m!}(A(1-u))^m\lambda^m d\lambda \\
    & = \sum_{m \ge 0} \frac1{m!}(A(1-u))^m \int_0^\infty \lambda^m\exp\left(-\frac{1}{7}(1-u^2)\lambda^2\right)  d\lambda \\
    & = \frac12 \sum_{m \ge 0} \frac1{m!}(A(1-u))^m \cdot \Gamma\left(\frac{m+1}2\right) \left(\frac{1-u^2}{7}\right)^{-(m+1)/2}.
\end{split}
\]
Since $1-u^2 = (1-u)(1+u)$ we further obtain from~\eqref{eq:Eamain} and Fubini that
\begin{equation}
\label{eq:Eamain2}
\begin{split}
    \frac74\mathbb{E}\left[T_{\alpha}\right]
    & =  \frac{\sqrt{7}}{2} \int_0^1 \sum_{m\ge 0} \frac{\Gamma(\frac{m+1}2)}{m!} \left(\sqrt{7}A\right)^m \frac{(1-u)^{(m-1)/2}}{(1+u)^{(m+1)/2}}du \\
    & =  \frac{\sqrt{7}}{2} \sum_{m\ge 0} \frac{\Gamma(\frac{m+1}2)}{m!} \left(\frac{8\alpha}{\sqrt7}\right)^m \int_0^1 \frac{(1-u)^{(m-1)/2}}{(1+u)^{(m+1)/2}} du.   
\end{split}
\end{equation}
To evaluate the last integral, we substitute $u = (1-\sqrt{v})/(1+\sqrt{v})$, i.e.~$du = -dv/(\sqrt{v}(1+\sqrt{v})^2)$. Moreover, let $x > -1$, and notice that by integration by parts, the fact that $(1-y^x)\ln(1-y) \to 0$ as $y\to 1$, and the series representation of the logarithm 
\begin{equation}
\label{eq:integralrepresentationHx}
    \int_{0}^{1} \frac{1-y^{x}}{1-y}dy
    = -\int_{0}^{1}xy^{x-1}\ln(1-y)dy
    = \sum_{k\ge 1} x \int_{0}^{1}\frac{y^{x+k-1}}{k}dy
    = \sum_{k\ge 1} \frac{x}{k (x+k)} = H_{x}.
\end{equation}
Thus
\[
    \int_0^1 \frac{(1-u)^{(m-1)/2}}{(1+u)^{(m+1)/2}} du
    = \int_{0}^{1}\frac{v^{(m-3)/4}}{2(1+\sqrt{v})}dv
    = \int_{0}^{1}\frac{v^{(m-3)/4} - v^{(m-1)/4}}{2(1-v)}dv
    = \frac{H_{(m-1)/4}- H_{(m-3)/4}}{2}.
\]
By plugging this expression into~\eqref{eq:Eamain2}, we obtain
 \[ 
    \mathbb{E}\left[T_{\alpha}\right]
    =\frac2{\sqrt{7}} \Gamma\Big(\frac{1}{2}\Big)\int_0^1 \frac{1}{\sqrt{1-u^2}}du + \frac1{\sqrt{7}}\sum_{m \ge 1} \frac{\Gamma(\frac{m+1}{2})}{m!}\left(\frac{8\alpha}{\sqrt{7}}\right)^m \left(H_{(m-1)/4} - H_{(m-3)/4} \right). 
\]
The statement now follows from $\Gamma(1/2) = \sqrt{\pi}$ and $ \int_0^1 1/\sqrt{1-u^{2}}du = \pi/2.$ 
\end{proof}

Next we establish the asymptotic behaviour for $|\alpha| \to \infty$ of $\mathbb{E}[T_{\alpha}]$. In order to do so, we do not use the explicit sum from~\eqref{asymean} -- although we could do so --  but rather exploit the integral representation that we have encountered previously. In particular, note that from Lemma~\ref{lem:infiniteinitialvalue}, see also~\eqref{eq:Eamain}, we obtain
\begin{equation*}
    \frac{7}{4}\mathbb{E}\left[T_{\alpha}\right]
    = \int_0^1 \int_{0}^{\infty}\exp\Big(-\frac17(1-u^2)\lambda^2 + \frac{8\alpha}{7}(1-u)\lambda\Big) d\lambda \; du.
\end{equation*}
By substituting $u$ for $1-u$ we obtain
\begin{equation}
\label{eq:Etaintrexpr}
    \frac{7}{4}\mathbb{E}\left[T_{\alpha}\right]
    = \int_0^1 \int_{0}^{\infty}\exp\big(-Q\lambda^2 + L\lambda\big) d\lambda \; du,
    \quad \text{where} \quad
    Q = \frac{2u-u^2}{7}, ~ L = \frac{8\alpha u}{7},
\end{equation}
which is the starting point in the following two proofs. We continue with the case $\alpha \to \infty$, where we show that $\mathbb{E}\left[T_{\alpha}\right]$ grows very fast, namely quadratic exponential in $\alpha$. 
\begin{cor}
\label{cor:ato+infty}
As $\alpha \rightarrow \infty$, we have
\begin{equation}
\label{eq:ETa->infty}
    \mathbb{E}\left[T_{\alpha}\right]
    = (1+o(1)) \frac{\sqrt{7\pi}}8 \cdot \frac{e^{16\alpha^2/7}}{\alpha^2}.
\end{equation}
\end{cor}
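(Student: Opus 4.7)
The plan is to start from the integral representation~\eqref{eq:Etaintrexpr} and successively apply a Gaussian evaluation in $\lambda$ followed by a Laplace-type analysis in $u$. Completing the square in the exponent $-Q\lambda^2 + L\lambda$, with $Q = (2u-u^2)/7$ and $L = 8\alpha u/7$, gives
\[
    -Q\lambda^2 + L\lambda = -Q\Big(\lambda - \tfrac{L}{2Q}\Big)^{\!2} + \frac{L^2}{4Q},
    \quad
    \frac{L}{2Q} = \frac{4\alpha}{2-u},
    \quad
    \frac{L^2}{4Q} = \frac{16\alpha^2}{7}\cdot\frac{u}{2-u}.
\]
After the change of variables $s = \sqrt{Q}\bigl(\lambda - L/(2Q)\bigr)$ the inner integral becomes $Q^{-1/2}\,e^{16\alpha^2 u/(7(2-u))}\int_{-\sqrt{Q}L/(2Q)}^\infty e^{-s^2} ds$. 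Since $\sqrt{Q}\,L/(2Q) = L/(2\sqrt Q) \to \infty$ as $\alpha \to \infty$ uniformly on any interval $u\in[u_0,1]$ with $u_0>0$, the truncated Gaussian integral converges to $\sqrt{\pi}$, and a crude bound (using $e^{-s^2}$ has superexponentially decaying tails) controls the error from $u$ near $0$.

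The second step is the outer integral. The crucial observation is that the function $\varphi(u) = u/(2-u)$ is strictly increasing on $[0,1]$ with $\varphi(1)=1$, so the exponent $\frac{16\alpha^2}{7}\varphi(u)$ is maximised at the boundary $u=1$. Substituting $v = 1-u$ yields $\varphi(u) = \tfrac{1-v}{1+v} = 1 - 2v + O(v^2)$ and $Q = (1-v^2)/7$, whence
\[
    \int_0^1 \sqrt{\pi/Q}\; e^{16\alpha^2 u/(7(2-u))}\,du
    \;\sim\; \sqrt{7\pi}\; e^{16\alpha^2/7} \int_0^1 e^{-32\alpha^2 v/7 + O(\alpha^2 v^2)}\,dv
    \;\sim\; \sqrt{7\pi}\; e^{16\alpha^2/7}\cdot\frac{7}{32\alpha^2},
\]
by the standard boundary Laplace estimate: the mass concentrates in the regime $v = O(\alpha^{-2})$, where the quadratic correction $\alpha^2 v^2 = O(\alpha^{-2})$ vanishes. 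Multiplying by $\tfrac{4}{7}$ produces the claim $\mathbb{E}[T_\alpha]\sim \tfrac{\sqrt{7\pi}}{8}\,e^{16\alpha^2/7}/\alpha^2$.

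The only subtle point is justifying the two approximations uniformly in $u$: first, that the truncated Gaussian $\int_{-L/(2\sqrt{Q})}^\infty e^{-s^2}ds$ can be replaced by $\sqrt{\pi}$ up to an error that, when integrated in $u$, is negligible compared to the leading order $e^{16\alpha^2/7}/\alpha^2$; and second, that the contribution to the outer integral from $u$ bounded away from $1$ (say $u\in[0,1-\alpha^{-1}\ln\alpha]$) is exponentially smaller, which follows because in that range the exponent is at most $\tfrac{16\alpha^2}{7}(1 - c/\ln\alpha)$ for some $c>0$ and the region has length $O(1)$. Both can be handled by crude envelope bounds, so the main calculation is really the Laplace expansion above; no deeper obstacle is expected.
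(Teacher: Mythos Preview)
Your proposal is correct and follows essentially the same route as the paper: complete the square in the inner $\lambda$-integral to obtain the factor $\sqrt{\pi/Q}\,e^{16\alpha^2 u/(7(2-u))}$, then perform a boundary Laplace analysis in $u$ near $u=1$ (the paper makes this explicit by splitting $(0,1]$ into $(0,1/2]$, $[1/2,1-\delta]$, $[1-\delta,1]$ with $\delta = 9\alpha^{-2}\ln\alpha$). One small slip: with your cutoff $u = 1-\alpha^{-1}\ln\alpha$ the exponent at the boundary is $\tfrac{16\alpha^2}{7}\bigl(1 - 2\alpha^{-1}\ln\alpha + O(\alpha^{-2}\ln^2\alpha)\bigr)$, not $\tfrac{16\alpha^2}{7}(1 - c/\ln\alpha)$ as you wrote; but the correct deficit $\Theta(\alpha\ln\alpha)$ in the exponent is still more than enough to kill that region relative to the main term $e^{16\alpha^2/7}/\alpha^2$, so the argument goes through unchanged.
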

\begin{proof}
In order to estimate~\eqref{eq:Etaintrexpr} we first study in more abstract terms the general behavior of the integral of $\exp(-A\lambda^2 + B\lambda)$ where $A,B$ are parameters; we will be particularly interested in the case where $A$ is positive and bounded (later we will set $A = Q = \frac17(2u-u^2)$ for $u\in[0,1]$) and~$B \to \infty$ (later we will set $B = L = 8\alpha u/7$, and the main contribution will come from $u$ close to 1). Note that $-A\lambda^2 + B\lambda = -(\sqrt{A}\lambda - {B}/{2\sqrt{A}})^2 + {B^2}/{4A}$, so by changing the variable $x=\sqrt{A}\lambda - {B}/{2\sqrt{A}}$, we obtain 
\begin{equation}
\label{eq:intStart}    
    \int_0^\infty \exp\big( -A\lambda^2 + B\lambda\big) d\lambda
    =  \frac{1}{\sqrt{A}} e^{B^2/4A} \int_{-B/2\sqrt{A}}^\infty e^{-x^2} dx.
\end{equation}
Since $\int_{-\infty}^\infty e^{-x^2}dx = \sqrt{\pi}$ we readily obtain the general upper bound
\begin{equation}
\label{eq:generalupperERF}    
    \int_0^\infty \exp\big(-A\lambda^2 + B\lambda\big) d\lambda
    \le \sqrt{\frac{\pi}A} e^{B^2/4A}, \quad A,B > 0.
\end{equation}
However, we will also require tighter bounds. Note that
\[
    \int_{-\infty}^{-B/2\sqrt{A}} e^{-x^2} dx=
    \int_{-\infty}^{-B/2\sqrt{A}} \frac{|x|}{|x|}e^{-x^2} dx
    \le \frac{2\sqrt{A}}{B} \int_{-\infty}^{-B/2\sqrt{A}}  |x| e^{-x^2} dx
    = \frac{\sqrt{A}}{B}e^{-B^2/4A}
\]
so that by 
\[
    \int_{-B/2\sqrt{A}}^\infty e^{-x^2} dx
    = \sqrt{\pi} - \int_{-\infty}^{-B/2\sqrt{A}} e^{-x^2} dx
\]
we obtain from~\eqref{eq:intStart} that
\begin{equation}
\label{eq:generalERF}    
    \int_0^\infty \exp\big(-A\lambda^2 + B\lambda\big) d\lambda
    = (\pi/A)^{1/2} e^{B^2/4A} 
    \pm \frac1B,
    \quad A,B > 0.
\end{equation}
With these preparations at hand we turn to the task of estimating~\eqref{eq:Etaintrexpr}. Let $0 < \delta < 1/2$ be some (small) quantity to be specified later. We split the integration range $u\in(0,1]$ (neglecting $u=0$) in three parts $(0,1/2], [1/2,1-\delta]$ and $[1-\delta,1]$. Let us consider the interval $[1-\delta,1]$ first, namely we start by estimating
\begin{equation}\label{mainrange}
    \int_{1-\delta}^{1}\int_0^\infty \exp\big(-A\lambda^2 + B\lambda\big) d\lambda du.
\end{equation}
As we will see, the main contribution to~\eqref{eq:Etaintrexpr} comes from~\eqref{mainrange}. We will apply~\eqref{eq:generalERF} with 
\[
    A = Q = \frac{2u-u^2}{7},
    B = L = \frac{8\alpha u}{7},
    \quad\text{so that}\quad
    \frac{B^2}{4A} = \frac{16\alpha^2}{7}\frac{u}{2-u}.
\]
Note that for this particular choice of $A$ and $B$ we obtain
\begin{equation}\label{int1}
    \int_{1-\delta}^{1}\int_0^\infty \exp\big(-A\lambda^2 + B\lambda\big) d\lambda du=\int_{1-\delta}^{1}\left(\frac{\sqrt{7\pi}}{\sqrt{2u-u^2}}e^{\frac{16\alpha^2}{7}\frac{u}{2-u}}\pm \frac{7}{8\alpha u}\right)du.
\end{equation}
With the purpose of simplifying the terms $u/(2-u)$ and $1/\sqrt{2u-u^2}$ observe that
\begin{equation}\label{RR}
    \frac{u}{2-u} - (1 - 2(1-u))=\frac{2(1-u)^2}{2-u}
\end{equation}
and, since $1-\delta\leq u\leq 1$, we readily see that the ratio on the right-hand side of (\ref{RR}) is at most $2(1-u)^2\leq 2\delta^2$. Whence we arrive at
\[
    0\leq \frac{u}{2-u} - (1 - 2(1-u)) \le  2\delta^2.
\]
Moreover, for $1-\delta\leq u\leq 1$ we also have that
\[\left|\frac{1}{\sqrt{2u - u^2}} - 1\right| \overset{}{\leq} 1-u \le \delta,\]
since, obviously, $\sqrt{2u-u^2} \le 1$ and moreover, by writing, say, $u = 1-y$ for $0 \le y \le 1/2$ and simplifying, $\sqrt{2u-u^2}(2-u) \ge 1$.
Going back to (\ref{int1}) we see that, uniformly in $\delta$ as $\alpha\to\infty$
\[
    \int_{1-\delta}^1 \int_0^\infty \exp\big(-A\lambda^2 + B\lambda\big) d\lambda du
    = (1 + O(\delta)) \int_{1-\delta}^1 \sqrt{7\pi}  e^{\frac{16\alpha^2}{7}(1- 2(1-u) + O(\delta^2))}du.
\]
Since 
\[
    \int_{1-\delta}^{1}e^{-\frac{32 \alpha^2}{7}(1-u)}du=\frac{7}{32 \alpha^2}\left(1-e^{-\frac{32 \alpha^2}{7}\delta}\right)
\]
we arrive at the expression
\begin{equation}\label{finexpress}
    \int_{1-\delta}^1 \int_0^\infty \exp\big(-A\lambda^2 + B\lambda\big) d\lambda du
    = (1 + O(\delta))  \sqrt{7\pi}  e^{\frac{16\alpha^2}{7}(1 + O(\delta^2))} \frac{7}{32\alpha^2} (1-e^{-32\alpha^2\delta/7}).
\end{equation}
We now choose the value of $\delta$ that will be most convenient to work with. Fix $\delta = 9\alpha^{-2}\ln\alpha$, so that $\alpha^2\delta \to \infty$ and $\alpha^2\delta^2 \to 0$ as $\alpha\to\infty$. Substituting this value of $\delta$ into \eqref{finexpress} we conclude that
\begin{equation}
\label{lastint}
    \int_{1-\delta}^1 \int_0^\infty \exp\big(-A\lambda^2 + B\lambda\big) d\lambda du
    \sim \frac{(7^3\pi)^{1/2}}{32} \alpha^{-2} e^{\frac{16\alpha^2}{7}},
    \quad
    \alpha \to \infty
    .
\end{equation}
Let us now turn our attention to the range $u \in (0,1/2] \cup [1/2, 1-\delta]$ in~\eqref{eq:Etaintrexpr}.
For the first interval we apply~\eqref{eq:generalupperERF} with $A = Q = \frac{2u-u^2}{7}, B = L = \frac{8\alpha u}{7}$, so that
\[
    \frac{B^2}{4A} = \frac{16\alpha^2}{7}\frac{u}{2-u} \le \frac13 \frac{16\alpha^2}{7},
    \quad u > 0.
\]
Thus, recalling the basic integral $\int 1/\sqrt{2u-u^2} du = \text{arcsin}(u-1) + C$ and the values $\text{arcsin}(-1) = -\pi/2$ and $\text{arcsin}(-1/2) = -\pi/6$, 
we arrive (by~\eqref{eq:generalupperERF}) at the (rough) bound
\begin{equation}
\label{eq:ETa0..1/2}
    \int_0^{1/2} \int_0^\infty \exp\big(-A\lambda^2 + B\lambda\big) d\lambda du
    \le e^{\frac13 \frac{16\alpha^2}{7}} \int_0^{1/2} \frac{\sqrt{7\pi}}{\sqrt{2u-u^2}}du
    = \frac{\sqrt{7}\pi^{3/2}}3 e^{\frac13 \frac{16\alpha^2}7},
\end{equation}
which is much smaller than the right-hand side in \eqref{lastint}. We conclude by considering the case $u\in[1/2,1-\delta]$. We apply again~\eqref{eq:generalupperERF} with $A = Q = \frac{2u-u^2}{7}, B = L = \frac{8\alpha u}{7}$, so that 
\[
    \frac{B^2}{4A}\le (1-\delta)\frac{16\alpha^2}{7}
    \quad
    \text{and, say,}
    \quad
    \frac{1}{\sqrt{A}} \le 4.
\]
Thus, we obtain
\begin{equation*}
\label{eq:ETa1/2..1-delta}
    \int_{1/2}^{1-\delta} \int_0^\infty \exp\big(-A\lambda^2 + B\lambda\big) d\lambda du
    \le 4\sqrt{\pi}\int_{1/2}^{1-\delta} e^{(1-\delta)\frac{16\alpha^2}{7}} du
    \le 4\sqrt{\pi} e^{(1-\delta)\frac{16\alpha^2}{7}},
\end{equation*}
which, by our choice of $\delta = 9\alpha^{-2}\ln\alpha$, is at most $4\sqrt{\pi}\alpha^{-9}e^{{16\alpha^2}/7}$. By combining this with~\eqref{eq:ETa0..1/2} for the case $u\in(0,1/2]$ and~\eqref{lastint} for $u\in[1-\delta,1]$ we finally obtain, as $\alpha\to\infty$, from~\eqref{eq:Etaintrexpr} the claim.
\end{proof}
In contrast to the quadratic exponential behavior as $\alpha\to\infty$ established in Corollary~\ref{cor:ato+infty}, we show that the behavior is rather moderate (i.e., polynomial with logarithmic corrections) when~$\alpha \to -\infty$. 
\begin{cor}
\label{cor:ato-infty}
As $\alpha \rightarrow -\infty$, we have
\begin{equation*}
    \mathbb{E}\left[T_{\alpha}\right]
    = (1+o(1))\frac{\ln |\alpha|}{|\alpha|}.
\end{equation*}
\end{cor}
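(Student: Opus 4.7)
The plan is to start, as in the proof of Corollary~\ref{cor:ato+infty}, from the integral representation
\[
    \frac{7}{4}\mathbb{E}[T_{\alpha}]
    = \int_0^1 I(u)\,du,
    \qquad
    I(u) \coloneqq \int_0^\infty \exp\big(-Q\lambda^2 + L\lambda\big) d\lambda,
\]
with $Q=(2u-u^2)/7$ and $L=8\alpha u/7$. For $\alpha<0$ we have $L<0$, so completing the square and substituting $x=\sqrt{Q}\lambda - L/(2\sqrt{Q})$ yields
\[
    I(u) = \frac{1}{\sqrt{Q}}e^{L^2/4Q}\int_{\beta(u)}^\infty e^{-x^2}dx,
    \qquad
    \beta(u) \coloneqq \frac{|L|}{2\sqrt{Q}} = \frac{4|\alpha|u}{\sqrt{7(2u-u^2)}}.
\]
The elementary asymptotics $\int_\beta^\infty e^{-x^2}dx = \frac{e^{-\beta^2}}{2\beta}(1+O(\beta^{-2}))$ as $\beta\to\infty$ then gives the pointwise identity $I(u) = |L|^{-1}(1+O(\beta(u)^{-2}))$ whenever $\beta(u)\to\infty$. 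This is the regime we must isolate.

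My next step is to split the $u$-integration at a cut-off $\delta=\delta(\alpha)=\omega/\alpha^{2}$, where $\omega=\omega(\alpha)\to\infty$ with $\omega=o(\ln^2|\alpha|)$ and $\ln\omega = o(\ln|\alpha|)$ (e.g. $\omega=\ln\ln|\alpha|$). On the small-$u$ piece $(0,\delta]$ I would simply drop the negative linear term and bound
\[
    I(u) \le \int_0^\infty e^{-Q\lambda^2}d\lambda = \frac{\sqrt\pi}{2\sqrt{Q}} \le \frac{\sqrt{7\pi}}{2\sqrt{u}},
\]
so that $\int_0^\delta I(u)\,du \le \sqrt{7\pi\delta} = \sqrt{7\pi\omega}/|\alpha| = o(\ln|\alpha|/|\alpha|)$, an acceptable error. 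On the complementary piece $[\delta,1]$ the easy estimate $2u-u^2\le 2u$ gives $\beta(u)\ge 4|\alpha|\sqrt{u/14}\ge \sqrt{8\omega/7}$ uniformly, so $\beta(u)\to\infty$ uniformly and the pointwise expansion above becomes the uniform estimate $I(u) = (1+O(\omega^{-1}))/|L| = (7/(8|\alpha|u))(1+O(\omega^{-1}))$.

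Integrating over $[\delta,1]$ telescopes as
\[
    \int_\delta^1 I(u)\,du = (1+O(\omega^{-1}))\frac{7}{8|\alpha|}\ln(1/\delta)
    = (1+O(\omega^{-1}))\frac{7}{8|\alpha|}\big(2\ln|\alpha|-\ln\omega\big)
    = (1+o(1))\frac{7\ln|\alpha|}{4|\alpha|},
\]
using $\ln\omega = o(\ln|\alpha|)$. Combining the two pieces and multiplying by $4/7$ yields $\mathbb{E}[T_\alpha] = (1+o(1))\ln|\alpha|/|\alpha|$, as required.

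The main obstacle is the careful choice of the cut-off $\delta$: it has to be large enough so that $\beta(u)$ is uniformly large on $[\delta,1]$ (to legitimize the $|L|^{-1}$ asymptotics there), yet small enough that $\int_0^\delta I\,du$ is dominated by $\ln|\alpha|/|\alpha|$ and that $\ln(1/\delta)=(2+o(1))\ln|\alpha|$. The choice $\delta=\omega/\alpha^2$ with $\omega$ growing very slowly satisfies all three requirements simultaneously, since the crucial scales balance at $|L|\sim\sqrt Q$, i.e., $u\sim\alpha^{-2}$. Everything else is bookkeeping of error terms.
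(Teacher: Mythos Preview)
Your proof is correct and follows essentially the same strategy as the paper: split the $u$-integration near $u\sim\alpha^{-2}$, bound the small-$u$ piece crudely by dropping the (favourable) linear term, and on the large-$u$ piece show that $I(u)\sim |L|^{-1}=7/(8|\alpha|u)$ so that integrating produces the logarithm. The only minor technical difference is that the paper derives the non-asymptotic bound $|I(u)-|L|^{-1}|\le 2Q|L|^{-3}$ via the elementary inequality $|e^{-Q\lambda^2}-1|\le Q\lambda^2$, which lets it take $\delta=\alpha^{-2}$ exactly, whereas your use of the asymptotic expansion of $\int_\beta^\infty e^{-x^2}dx$ forces the auxiliary slowly-growing parameter $\omega$; both routes give the same conclusion with comparable effort.
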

\begin{proof}
Let us start with two auxiliary estimates that we shall exploit. Using $1-e^{-y}\leq y$ for $y\geq 0$ we obtain $e^{-Qx^2} = 1 \pm Qx^2$ for all $x\in \mathbb{R}, Q>0$. Also, $\int_0^\infty e^{-Lx} dx = 1/L$ for all $L > 0$ and we obtain (for $Q,L > 0$)
\begin{equation}
\label{eq:intestimateLARGE}
    \left|\int_0^\infty \exp\Big(-Qx^2 - Lx\Big) dx - \frac1L\right| =\int_{0}^{\infty}e^{-Lx}dx - \frac{1}{L} \pm Q\int_{0}^{\infty}x^2e^{-Lx}dx
    \le 
    2Q L^{-3},
\end{equation}
where the inequality follows from integration by parts of the second integral.
This estimate will be useful when $L$ is large. On the other hand, when $L$ is small, we will use a different bound. Indeed, noting that $|e^{-Lx} -1| \le Lx $ for all $Lx > 0$ and that $\int_0^\infty e^{-Qx^2} dx = \sqrt{\pi}/2\sqrt{Q}$ and $\int_0^\infty xe^{-Qx^2} dx = 1/2Q$ for $Q > 0$, we obtain 
\begin{equation}
\label{eq:intestimateSMALL}
    \left|\int_0^\infty \exp\Big(-Qx^2 - Lx\Big) dx - \frac{\sqrt{\pi}}{2\sqrt{Q}}\right|
    \le \frac{L}{2Q},
    \quad
    Q,L > 0.
\end{equation}
With those facts at hand we continue with the proof of the corollary.
From~\eqref{eq:Etaintrexpr} we readily obtain 
\begin{equation*}
\label{eq:EtaintrexprII}
    \frac{7}{4}\mathbb{E}\left[T_{\alpha}\right]
    = \int_0^1 \int_{0}^{\infty}\exp\big(-Q\lambda^2 -L\lambda\big) d\lambda \; du,
    \quad \text{where} \quad
    Q = \frac{2u-u^2}{7}, ~ L = \frac{8|\alpha|u}{7}.
\end{equation*}
We will use~\eqref{eq:intestimateSMALL} to estimate the integral when~$u$ is close to zero and~\eqref{eq:intestimateLARGE} otherwise. More precisely, let $0 < \delta < 1$. Then, by~\eqref{eq:intestimateSMALL} together with the fact that for any $0 \le u \le 1$
\[
    2LQ^{-1}
    =\frac{16|\alpha|u}{2u-u^2}
    \leq 16|\alpha|
\]
we obtain
\begin{align*}
    \int_0^\delta \int_0^\infty \exp\big(-Q\lambda^2 -L\lambda\big)d\lambda \; du
    &= \int_0^\delta \left(\frac{\sqrt{\pi}}{2\sqrt{Q}}\pm 16|\alpha|\right)du
    =  \frac{\sqrt{7\pi}}{2}\int_{0}^{\delta}\frac{du}{\sqrt{2u-u^2}} \pm 16|\alpha| \delta.
\end{align*}
Since $u\leq 2u-u^2\leq 2u$ for $0 \le u \le \delta \le 1$, 
we obtain (uniformly in $\delta, \alpha$)
\begin{equation}
\label{eq:int0delta}
    \int_0^\delta \int_0^\infty \exp\big(-Q\lambda^2 -L\lambda\big)d\lambda \; du
    = O\big(\sqrt{\delta} + |\alpha| \delta\big).
\end{equation}
Moreover, by~\eqref{eq:intestimateLARGE} and $QL^{-3} \le |\alpha|^{-3}u^{-2}$ we obtain (again uniformly in $\delta,\alpha$) 
\[
\begin{split}
    \int_\delta^1 \int_0^\infty \exp\big(-Q\lambda^2 -L\lambda\big)d\lambda \; du
    & = \int_\delta^1 \frac1L + O(QL^{-3})du
    = \frac{7\ln(1/\delta)}{8|\alpha|} + O(\delta^{-1} |\alpha|^{-3}).
\end{split}
\]
By choosing $\delta = \alpha^{-2}$ and combining the last estimate with~\eqref{eq:int0delta} we obtain from~\eqref{eq:Etaintrexpr}, as $\alpha \to -\infty$,
\[
    \frac{7}{4}\mathbb{E}\left[T_{\alpha}\right]
    = \frac{7\ln(1/\delta)}{8|\alpha|} + O\big(\sqrt{\delta} + |\alpha|\delta + \delta^{-1}/|\alpha|^3\big)
    \sim \frac74\frac{\ln|\alpha|}{|\alpha|},
\]
and the claim follows.
\end{proof}

\section{Proof of Theorem~\ref{numbjumps} -- Total Number of Jumps}
\label{sec:proofnumjumps}
Recall that $M=M(n)= n/2 + \alpha n^{1/2} + o(n^{1/2})$ with $\alpha \in \mathbb{R}$. Our goal is to establish that there exists a continuous random variable $A_\alpha$ such that, as $n\to \infty$, 
\begin{equation}
    \label{eq:claim}
    n^{-1}\bigg(\sum_{t \ge 0} U_t - \frac27n\ln n\bigg)
    \overset{d}{\longrightarrow}
    A_\alpha.
\end{equation}
For this purpose we fix an $\epsilon > 0$ and $\delta > 0$ sufficiently small and~$n$  sufficiently large so that Lemma~\ref{lem:earlyStepsSummary} applies. In particular, there exists $\chi \in \mathbb{R}$ such that, with probability at least $1-\delta$, 
\begin{equation}
    \label{eq:earlySteps}
    U_{T_{n,M,\delta}} \sim \frac{\sqrt{n}}{\delta} \quad  \text{ and } \sum_{0 \le t \le T_{n,M,\delta}}U_{t} = \frac{2}{7}n\ln n + \frac{4}{7}n \ln \delta + \chi n \pm \epsilon n,
\end{equation}
 where $T_{n,M,\delta} = \inf \{t>0 : U_{t}\le n^{1/2}/\delta\}$. In the following lemma we consider all $t \ge T_{n,M,\delta}$. 
\begin{lem}
    \label{lem:convergenceSumToIntegral}
    Let $\delta >0$. As $n \to \infty$, 
    \[
    n^{-1}\sum_{t \ge T_{n,M,\delta}} U_t \overset{d}{\longrightarrow} \int_{0}^{\infty}X_{s}ds, 
    \]
     where $X$ satisfies \eqref{eq:SDEFellerDiffusionX} and thus depends on $\delta$ through the initial condition $X_{0}=\delta^{-1}$.
\end{lem}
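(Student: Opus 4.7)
The plan is to recognise $n^{-1}\sum_{t \ge T_{n,M,\delta}} U_t$ as the integral of the time-rescaled process from Lemma~\ref{lem:diffusionapprox}, apply the continuous mapping theorem on every bounded horizon $[0,T]$, and then extend to an infinite horizon by means of the uniform-in-$n$ exponential tail bound of Theorem~\ref{cor:tailbounds}. Concretely, set $X^{(n)}_s \coloneqq n^{-1/2} U_{T_{n,M,\delta}+\lfloor s n^{1/2}\rfloor}$ for $s \ge 0$; this is a right-continuous, piecewise-constant process with jumps at $s = kn^{-1/2}$, $k \in \mathbb{N}_0$, and constancy on each interval $[kn^{-1/2},(k+1)n^{-1/2})$ immediately gives
\[
    \int_0^\infty X^{(n)}_s\,ds
    = \sum_{k \ge 0} n^{-1/2}\cdot n^{-1/2}U_{T_{n,M,\delta}+k}
    = n^{-1}\sum_{t \ge T_{n,M,\delta}} U_t.
\]
It therefore suffices to prove that $\int_0^\infty X^{(n)}_s\,ds \overset{d}{\longrightarrow} \int_0^\infty X_s\,ds$, where $X$ is the solution of \eqref{eq:SDEFellerDiffusionX} with $X_0 = \delta^{-1}$.

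For every fixed $T > 0$, the functional $\xi\mapsto\int_0^T \xi(u)\,du$ on $D([0,T],\mathbb{R})$ is continuous in the Skorokhod topology at every continuous $\xi$. Since Lemma~\ref{lem:diffusionapprox} provides the weak convergence $X^{(n)} \to X$ in $D([0,T],\mathbb{R})$ and Corollary~\ref{cor:existenceanduniqueness} guarantees that $X$ has continuous paths almost surely, the continuous mapping theorem yields
\[
    \int_0^T X^{(n)}_s\,ds \overset{d}{\longrightarrow} \int_0^T X_s\,ds
    \quad\text{for every } T > 0.
\]
Moreover, by Lemma~\ref{lem:infiniteinitialvalue}, $X$ is absorbed at $0$ in almost surely finite time, so $\int_0^T X_s\,ds \nearrow \int_0^\infty X_s\,ds$ almost surely as $T\to\infty$; this delivers the natural limit of the right-hand side.

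The main obstacle is the uniform-in-$n$ tail control of the prelimit. Since $U_t = 0$ for $t \ge T_{n,M}$, we have $X^{(n)}_s = 0$ as soon as $\lfloor s n^{1/2}\rfloor \ge T_{n,M}-T_{n,M,\delta}$. Using $T_{n,M,\delta}\ge 0$ together with Theorem~\ref{cor:tailbounds}, for $T \ge c_\alpha$ and every sufficiently large $n$,
\[
    \mathbb{P}\bigg(\int_T^\infty X^{(n)}_s\,ds > 0\bigg)
    \le \mathbb{P}\big(T_{n,M} > T n^{1/2}\big)
    \le e^{-T/c_\alpha}.
\]
These three ingredients -- convergence in distribution of the truncated integrals for each fixed $T$, almost sure convergence of the limiting truncation to the full integral as $T\to\infty$, and the uniform-in-$n$ vanishing in probability of the prelimit tail -- combine through the standard three-step approximation argument for convergence in distribution to yield the claim.
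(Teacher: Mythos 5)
Your proof is correct, and while it follows the same overall structure as the paper's (truncate to a finite horizon, pass to the limit there, control the tail uniformly, and then let the horizon go to infinity), the two ingredients are handled differently. For the finite-horizon step, the paper sandwiches $\int_0^S U'_{sn^{1/2}}\,ds$ between Riemann sums built from $\sup$ and $\inf$ over partition cells, invokes Skorokhod continuity of the $\sup$ functional to pass $n\to\infty$ with the partition fixed, and then refines the partition using a.s.\ path continuity of $X$; you short-circuit this by invoking directly the continuity of $\xi \mapsto \int_0^T \xi(u)\,du$ on $D([0,T],\mathbb{R})$ at continuous paths and applying the continuous mapping theorem, which is cleaner and logically equivalent (both rely on the fact that Skorokhod convergence to a continuous limit is uniform). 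For the tail, the paper bounds $\mathbb{P}(n^{-1/2}T'_{n,M,\delta}>S)$ and then sends $n\to\infty$ via Lemma~\ref{lem:convergenceTprime} before sending $S\to\infty$, which requires the $\lim_n$ and $\lim_S$ to be taken in a careful order; you instead use the non-asymptotic exponential tail bound of Theorem~\ref{cor:tailbounds}, which gives a uniform-in-$n$ estimate $\mathbb{P}\big(\int_T^\infty X^{(n)}_s\,ds>0\big)\le e^{-T/c_\alpha}$ and lets you invoke the standard three-step approximation theorem (e.g.\ Billingsley's Theorem 3.2) directly. Both routes are sound; yours is marginally more self-contained for the tail control, the paper's keeps all steps inside the machinery built in Section~\ref{sec:mainProof}.
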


\begin{proof}
We use the notation from Section~\ref{sec:mainProof} and consider the process of unhappy particles after $T_{n,M,\delta}$, i.e. $U'_s = U_{\lfloor s\rfloor + T_{n,M,\delta}}$ for all $s \ge 0$. Hence, 
\begin{equation}\label{eq:rewrite}
    n^{-1}\sum_{t \ge T_{n,M,\delta}} U_t = n^{-1}\sum_{t \ge 0} U'_t = n^{-1}\int_{0}^{\infty}U'_{s}ds= n^{-1/2}\int_{0}^{\infty}U'_{sn^{1/2}}ds. 
\end{equation}
Let $S >0$. For every $m\in \mathbb{N}$, let $\pi_{m}: 0 =s_{0} < s_{1} < ... < s_{m}=S$ be a partition of the interval $[0,S]$ such that $\lim_{m\to \infty}\sup_{i\in\{1,...,m\}}|s_{i}-s_{i-1}|=0$.
Recall Lemma~\ref{lem:diffusionapprox}, from which we obtain that the process $(n^{-1/2}U'_{sn^{1/2}})_{s\ge0}$ converges weakly to $X$ as $n\to \infty$ in the Skorokhod space $D([0,S],\mathbb{R})$. Since, for each$i \in \{1,...,m\}$, the mapping $F(f) = \sup_{s\in[s_{i-1},s_{i}]}f(s)$, $f \in D([0,S],\mathbb{R})$, is continuous with respect to the Skorokhod metric, this implies that as $n\to\infty$
\[
    n^{-1/2}\sum_{i=1}^m(s_i - s_{i-1})\sup_{s \in [s_{i-1},s_{i}]}U'_{sn^{1/2}}
    \overset{d}{\longrightarrow}
    \sum_{i=1}^m (s_{i}-s_{i-1}) \sup_{s \in [s_{i-1},s_{i}]}X_s.
\]
    Noting that 
    \[ \int_{0}^{S}U'_{sn^{1/2}}ds \le \sum_{i=1}^{m} (s_{i}-s_{i-1})\sup_{s\in [s_{i-1},s_{i}]} U'_{sn^{1/2}} \]
    we obtain, for all $a\ge 0$,
    \begin{align*}
    \lim_{n\to \infty}\mathbb{P}\left(n^{-1/2}\int_{0}^{S}U'_{sn^{1/2}}ds\le a\right) &\ge \lim_{n\to \infty}\mathbb{P}\left(n^{-1/2}\sum_{i=1}^{m}(s_{i}-s_{i-1})\sup_{s\in [s_{i-1},s_{i}]}U'_{sn^{1/2}}\le a\right)\\
    &= \mathbb{P}_{2\alpha,7/4,1,1/\delta}\left(\sum_{i=1}^{m}(s_{i}-s_{i-1})\sup_{s\in [s_{i-1},s_{i}]}X_{s}\le a\right).
    \end{align*} 
    Analogously, we obtain 
    \begin{align*}
    \lim_{n\to \infty}\mathbb{P}\left(n^{-1/2}\int_{0}^{S}U'_{sn^{1/2}}ds\le a\right) \le \mathbb{P}_{2\alpha,7/4,1,1/\delta}\left(\sum_{i=1}^{m}(s_{i}-s_{i-1})\inf_{s\in [s_{i-1},s_{i}]}X_{s}\le a\right).
    \end{align*}
    As $X$ has almost surely continuous paths, $\int_{0}^{S}X_{s}ds$ is a Riemann integral for almost every realisation and hence
     \[ 
     \lim_{m\to \infty}\sum_{i=1}^{m}(s_{i}-s_{i-1})\inf_{s \in [s_{i-1},s_{i}]}X_{s}
     = \int_0^S X_s ds
     = \lim_{m\to \infty}\sum_{i=1}^{m}(s_{i}-s_{i-1})\sup_{s \in [s_{i-1},s_{i}]}X_s.
     \]    
This yields the equality
\[
\lim_{n\to \infty}\mathbb{P}\left(n^{-1/2}\int_{0}^{S}U'_{sn^{1/2}}ds\le a\right) = \mathbb{P}_{2\alpha,7/4,1,1/\delta}\left(\int_{0}^{S}X_{s}ds\le a\right), \quad a\ge 0.
\]
As $U' \ge 0$, we readily obtain, for all $a\ge 0$,
\begin{equation}
\label{eq:ubsumut}
\begin{split}
    \lim_{n\to \infty}\mathbb{P}\left(n^{-1/2}\int_{0}^{\infty}U'_{sn^{1/2}}ds\le a\right) &\le \lim_{n\to \infty}\mathbb{P}\left(n^{-1/2}\int_{0}^{S}U'_{sn^{1/2}}ds\le a\right)\\
    &=\mathbb{P}_{2\alpha,7/4,1,1/\delta}\left(\int_{0}^{S}X_{s}ds\le a\right).
\end{split}
\end{equation}
 Additionally, since $U'_{s}=0$ for all $s\ge T'_{n,M,\delta}$ 
 we estimate that
    \[ \mathbb{P}\left(n^{-1/2}\int_{0}^{\infty}U'_{sn^{1/2}}ds\ge a\right) \le  \mathbb{P}\left(n^{-1/2}\int_{0}^{S}U'_{sn^{1/2}}ds\ge a\right) + \mathbb{P}\left( n^{-1/2}T'_{n,M,\delta}> S\right), \quad a \ge 0,\]
    from which we readily deduce using Lemma~\ref{lem:convergenceTprime} that, for all $a\ge 0$, 
     \[ \lim_{n\to \infty}\mathbb{P}\left(n^{-1/2}\int_{0}^{\infty}U'_{sn^{1/2}}ds\ge a\right) \le \mathbb{P}_{2\alpha,7/4,1,1/\delta}\left(\int_{0}^{S}X_{s}ds\ge a\right) + \mathbb{P}_{2\alpha,7/4,1,1/\delta}\left( T> S\right).\]
     The claim is established by letting $S \to \infty$,\eqref{eq:ubsumut} and \eqref{eq:rewrite}.
\end{proof}

Recall Lemma~\ref{lem:integralequalsTprime}, where we established that the integral $\int_{0}^{\infty}X_{s}ds$ from the previous statement equals the hitting time of zero of an Ornstein-Uhlenbeck process with suitable parameters depending on $\alpha \in \mathbb{R}$ and $\delta>0$. As the distribution of such a stopping time is well-known, see Lemma~\ref{lem:densityTprime}, this connection is the key element in the proof of the convergence in distribution of $\int_{0}^{\infty}X_s ds + \frac47\ln\delta$ as $\delta \to 0$, which is stated below. 
\begin{lem}
    \label{lem:convergenceIntegral}
    For all $\alpha \in \mathbb{R}$ there exists a continuous random variable $\tilde{A}_{\alpha}$ such that as $\delta \to 0$
    \[\int_{0}^{\infty}X_{s}ds + \frac{4}{7}\ln \delta \overset{d}{\longrightarrow} \tilde{A}_{\alpha},  \]
    where $X$ satisfies \eqref{eq:SDEFellerDiffusionX} and thus depends on $\delta$ through the initial condition $X_{0}=\delta^{-1}$. In particular, for $\alpha =0$,
    \[
    \mathbb{P}(\tilde{A}_{0} \ge a) =  \text{\emph{erf}}\left(\frac{\sqrt{7}}{2}e^{-7a/4} \right), \quad a \in \mathbb{R}.
    \]
\end{lem}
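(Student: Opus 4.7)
The plan is to first invoke Lemma~\ref{lem:integralequalsTprime} with parameters $(a,c,\gamma,x) = (2\alpha,7/4,1,1/\delta)$. This identifies, under $\mathbb{P}_{2\alpha,7/4,1,1/\delta}$, the integral $\int_0^\infty X_s\,ds$ with the absorption time $T(R) = \inf\{s \ge 0 : R_s = 0\}$ of the Ornstein--Uhlenbeck process $R$ solving $dR_s = (2\alpha - (7/4)R_s)\,ds + dB_s$ with $R_0 = 1/\delta$. Writing $x = 1/\delta$, the lemma thus reduces to showing that $T(R^{\alpha,x}) - (4/7)\ln x$ converges in distribution to a continuous random variable as $x \to \infty$, together with an identification of this limit in the case $\alpha = 0$.

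For the special case $\alpha = 0$, I would use the explicit density of $T(R)$ provided by Lemma~\ref{lem:densityTprime}. The starting point is the observation that $G(s) := \mathrm{erf}\bigl(\sqrt{cx^2/(\gamma(e^{2cs}-1))}\bigr)$ is an antiderivative of $-p$: differentiating via the chain rule together with $\mathrm{erf}'(y) = (2/\sqrt{\pi})e^{-y^2}$ gives $G'(s) = -p(s)$, and since $G(0^+) = 1$ and $G(\infty) = 0$ this yields the closed-form tail
\begin{equation*}
    \mathbb{P}(T(R) \ge s) = \mathrm{erf}\!\left(\frac{\sqrt{7}}{2\delta\sqrt{e^{7s/2}-1}}\right), \qquad s \ge 0.
\end{equation*}
Substituting $s = a - (4/7)\ln\delta$, so that $e^{7s/2} = \delta^{-2}e^{7a/2}$, gives for any fixed $a \in \mathbb{R}$ and all sufficiently small $\delta>0$
\begin{equation*}
    \mathbb{P}\bigl(T(R) + (4/7)\ln\delta \ge a\bigr) = \mathrm{erf}\!\left(\frac{\sqrt{7}}{2\sqrt{e^{7a/2}-\delta^2}}\right) \xrightarrow[\delta \to 0]{} \mathrm{erf}\!\left(\frac{\sqrt{7}}{2}\,e^{-7a/4}\right),
\end{equation*}
which is continuous and strictly decreasing in $a$, and therefore defines the distribution of $\tilde A_0$ as claimed.

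For general $\alpha \in \mathbb{R}$, the main obstacle is that no explicit hitting-time density analogous to Lemma~\ref{lem:densityTprime} is available. My approach is to exploit the pathwise coupling in which $R^{\alpha,x}$ and $R^{0,x}$ are driven by the same Brownian motion: the difference $R^{\alpha,x}_s - R^{0,x}_s = (8\alpha/7)(1-e^{-7s/4})$ is deterministic and converges exponentially fast to $8\alpha/7$. Hence $T(R^{\alpha,x})$ coincides with the first time $R^{0,x}$ hits the deterministic time-dependent level $\ell_\alpha(s) := -(8\alpha/7)(1-e^{-7s/4})$, which is, for large $s$, an exponentially small perturbation of the fixed level $-8\alpha/7$. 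Combining this with the strong Markov property -- decomposing $T(R^{\alpha,x})$ at the first hitting time of some arbitrary but fixed level $K$ of the centered process $R^{0,x}$ -- reduces the problem to controlling the asymptotics of the centered O--U hitting time of a fixed level when starting from $x \to \infty$. This last step can be handled using the Dambis--Dubins--Schwarz time-change that underlies Lemma~\ref{lem:densityTprime}, or directly via the representations for O--U hitting times given in Alili--Patie--Pedersen~\cite{alili2005representations}, yielding a continuous weak limit $\tilde A_\alpha$ together with tightness (e.g.\ from the uniform finiteness of $\mathbb{E}_{2\alpha,7/4,1,x}[T]$ in $x$ stated in Lemma~\ref{lem:infiniteinitialvalue}), which completes the proof.
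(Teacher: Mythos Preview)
Your treatment of the case $\alpha=0$ is correct and essentially identical to the paper's: both reduce, via Lemma~\ref{lem:integralequalsTprime}, to the tail of the O--U hitting time from Lemma~\ref{lem:densityTprime}, obtain the closed form $\mathrm{erf}\bigl(\tfrac{\sqrt7}{2\delta}\,(e^{7s/2}-1)^{-1/2}\bigr)$, and pass to the limit after the shift $s=a-\tfrac47\ln\delta$.

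For general $\alpha$ your route diverges from the paper's and is noticeably more awkward. The paper does \emph{not} couple $R^{\alpha,x}$ and $R^{0,x}$ with the same starting point; instead it performs the spatial shift $\tilde R \coloneqq R - 8\alpha/7$, which is again a \emph{centred} O--U process but now started from $\delta^{-1}-8\alpha/7$, so that $T(R)=\tau_\alpha\coloneqq\inf\{s:\tilde R_s=-8\alpha/7\}$ is the hitting time of a \emph{constant} level. With $\tau_0\coloneqq\inf\{s:\tilde R_s=0\}$, the strong Markov property then gives that $\tau_\alpha-\tau_0$ (or $\tau_0-\tau_\alpha$, according to the sign of $\alpha$) is \emph{exactly} independent of $\delta$ once $\delta$ is small, and the $\alpha=0$ computation applied to $\tau_0$ together with dominated convergence finishes the argument in a few lines. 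Your coupling, by contrast, produces the time-dependent barrier $\ell_\alpha(s)=-(8\alpha/7)(1-e^{-7s/4})$, and after restarting at a fixed level $K$ the residual hitting time still depends on $x$ through the absolute time $\sigma_K$ entering $\ell_\alpha(\sigma_K+\,\cdot\,)$; you would need an extra approximation argument (continuity of the hitting time in the barrier level, plus $\sigma_K\to\infty$) that you only gesture at. The appeal to DDS/Alili--Patie--Pedersen and to ``tightness'' at the end is too vague to count as a proof. Your idea is salvageable, but the paper's spatial shift is the clean way to turn it into a two-line strong-Markov decomposition.
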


\begin{proof}
Let us begin with the case $\alpha =0$. Abbreviate $\mathbb{P}_{0,7/4,1,1/\delta}$ with $\mathbb{P}_{\delta}$. From Lemmas~\ref{lem:integralequalsTprime} and~\ref{lem:densityTprime} we obtain for all $a\ge 0$
\begin{align*}
    \mathbb{P}_{\delta} & \left(\int_{0}^{\infty}X_{s}ds \ge a\right) =\left(\frac{7}{2} \right)^{3/2}\frac{\delta^{-1}}{\sqrt{2\pi}} \int_{a}^{\infty}\left(e^{7s/2}-1\right)^{-3/2}e^{7s/2}\exp\left(-\frac{7\delta^{-2}}{4(e^{7s/2}-1)}\right)ds.
    \end{align*}
     The substitution $u = \sqrt{7}\delta^{-1}(e^{7s/2}-1)^{-1/2}/2$ leads to
     \begin{align}
     \label{eq:distribution Integral}
         \mathbb{P}_{\delta}\left(\int_{0}^{\infty}X_{s}ds \ge a\right)&= \frac{2}{\sqrt{\pi}}\int_0^{\sqrt7\delta^{-1}(e^{7a/2}-1)^{-1/2}/2} \exp(-u^2) du 
         = \text{erf}\left( \frac{\sqrt{7}\delta^{-1}}{2\sqrt{e^{7a/2}-1}}\right).
     \end{align}
    As for all $a \in \mathbb{R}$ we have $a - \frac47\ln\delta \ge 0$ when $\delta >0$ is sufficiently small, we conclude that for all $a \in \mathbb{R}$
    \begin{align*}
        \lim_{\delta \to 0}\mathbb{P}_{\delta}\left(\int_{0}^{\infty}X_{s}ds + \frac{4}{7}\ln\delta\ge a\right) &= \lim_{\delta \to 0}\text{erf}\left(\frac{\sqrt{7}\delta^{-1}}{2\sqrt{\exp(7a/2 - 2 \ln\delta)-1}} \right)
        = \text{erf}\left(\frac{\sqrt7}2e^{-7a/4}\right). 
    \end{align*}
We now turn to the general case with $\alpha \in \mathbb{R}$, where we abbreviate $\mathbb{P}_{2\alpha,7/4,1,1/\delta}$ with $\mathbb{P}_{\alpha,\delta}$.
Furthermore, let $R$ be an Ornstein-Uhlenbeck process that under $\mathbb{P}_{\alpha,\delta}$ satisfies the SDE
\[ dR_{s} = \left(2\alpha- \frac{7}{4}R_{s}\right)ds + dB_{s},\; s > 0, \quad\text{with}\quad R_{0}=\delta^{-1}.\]
Then, the shifted process $\tilde{R} = R - 8\alpha/7$ is also an O-U process under $\mathbb{P}_{\alpha,\delta}$, since 
\[  d\tilde{R}_{s} = - \frac{7}{4}\tilde{R}_{s}ds + dB_{s},\; s > 0, \quad\text{with}\quad \tilde{R}_0 = \delta^{-1}-8\alpha/7. \]
If we define the stopping time 
\[\tau_{x} = \inf\{s \ge 0 : \tilde{R}_{s} = -8x/7\}, \quad x \in \mathbb{R},\]
then $\tau_{\alpha} =T(R) = \inf\{s \ge 0 : R_{s} = 0\}$ under $\mathbb{P}_{\alpha,\delta}$. So, Lemma~\ref{lem:integralequalsTprime} 
implies that
$\int_{0}^{\infty}X_{s}ds$ and $\tau_{\alpha}$ have the same distribution under $\mathbb{P}_{\alpha,\delta}$. Further, we obtain analogously to \eqref{eq:distribution Integral} that 
\[\mathbb{P}_{\alpha,\delta}\left(\tau_{0} + \frac{4}{7} \ln\delta \ge a\right) = \text{erf}\left(\frac{\sqrt{7}\delta^{-1} -8\alpha/\sqrt{7}}{2\sqrt{\exp(7a/2 - 2 \ln\delta)-1}} \right) , \quad a \in \mathbb{R}. \]
Hence, for all $a\in \mathbb{R}$ 
\begin{align*}
\mathbb{P}_{\alpha,\delta}\left(\int_{0}^{\infty}X_{s}ds + \frac{4}{7}\ln\delta \ge a \right)
&=\mathbb{E}_{\alpha, \delta}\left[\mathbb{P}_{\alpha,\delta}\left(\tau_{0} + \frac{4}{7}\ln\delta \ge a  - (\tau_{\alpha}-\tau_{0})  \mid \tau_{\alpha}-\tau_{0} \right) \right]\\
&= \mathbb{E}_{\alpha, \delta}\left[ \text{erf}\left(\frac{\sqrt{7}\delta^{-1} -8\alpha/\sqrt{7}}{2\sqrt{\exp(7(a-(\tau_{\alpha}-\tau_{0}))/2 - 2 \ln\delta)-1}} \right)\right]  .
\end{align*}
Note that $\tau_{0}-\tau_{\alpha}$ is independent of $\delta$ for $\delta$ small enough, as for $\alpha \le0$, the difference $\tau_{0}-\tau_{\alpha}$ represents the time the process $\tilde{R}$ started in $-8\alpha/7$ needs to hit $0$. For $\alpha\ge 0$, this observation follows by a very similar argument for $\tau_{\alpha}-\tau_{0}$. Together with the dominated convergence theorem, we therefore obtain
\begin{align*}
    & \lim_{\delta \to 0}\mathbb{E}_{\alpha,\delta}\left[ \text{erf}\left(\frac{\sqrt{7}\delta^{-1} -8\alpha/\sqrt{7}}{2\sqrt{\exp(7(a-(\tau_{\alpha}-\tau_{0}))/2 - 2 \ln\delta)-1}} \right)\right]\\
    = & \lim_{\delta \to 0}\mathbb{E}_{\alpha,0}\left[ \text{erf}\left(\frac{\sqrt{7}\delta^{-1} -8\alpha/\sqrt{7}}{2\sqrt{\exp(7(a-(\tau_{\alpha}-\tau_{0}))/2 - 2 \ln\delta)-1}} \right)\right]
    = \mathbb{E}_{\alpha,0}\left[\text{erf}\left(\frac{\sqrt{7}}{2e^{7(a - (\tau_{\alpha}-\tau_{0}))/4}}\right) \right],
\end{align*}
from which the claim readily follows. 
\end{proof}
We are now in  a position to complete  the proof of Theorem~\ref{numbjumps}. Since \eqref{eq:earlySteps} occurs with probability at least $1-\delta$,  
\begin{align*}
    \mathbb{P}\Bigg(n^{-1}\sum_{t\ge 0} U_{t}- \frac{2}{7}\ln n \le a \Bigg)
    \le \mathbb{P}\Bigg(n^{-1}\sum_{t> T_{n,M,\delta}}U_{t} + \frac{4}{7}\ln \delta + \chi - \epsilon \le a \Bigg) + \delta, \quad a\in \mathbb{R}.
\end{align*}
Subsequently, we again abbreviate $\mathbb{P}_{2\alpha, 7/4, 1, 1/\delta}$ with $\mathbb{P}_{\alpha,\delta}$. Since $\lim_{n\to \infty}n^{-1}U_{T_{n,M,\delta}}=0$, it follows from Lemma~\ref{lem:convergenceSumToIntegral} that
\[\lim_{n\to \infty}\mathbb{P}\Bigg(n^{-1}\sum_{t> T_{n,M,\delta}}U_{t} + \frac{4}{7}\ln \delta + \chi - \epsilon \le a \Bigg) = \mathbb{P}_{\alpha,\delta}\left(\int_{0}^{\infty}X_{s}ds + \frac47\ln \delta + \chi - \epsilon \le a \right),\quad a \in \mathbb{R}.\]
Letting $\delta \to 0$, Lemma~\ref{lem:convergenceIntegral} thus implies that 
\[\lim_{n\to \infty}\mathbb{P}\Bigg(n^{-1}\sum_{t\ge 0} U_{t}- \frac{2}{7}\ln n \le a \Bigg) \le \mathbb{P}_{\alpha,0}\left(\tilde{A}_{\alpha} + \chi - \epsilon \le a \right), \quad a \in \mathbb{R}.  \]
Using a very similar argument, we also estimate that 
\[\lim_{n\to \infty}\mathbb{P}\Bigg(n^{-1}\sum_{t\ge 0} U_{t}- \frac{2}{7}\ln n \ge a \Bigg) \le \mathbb{P}_{\alpha,0}\left(\tilde{A}_{\alpha} + \chi + \epsilon \ge a \right), \quad a \in \mathbb{R}.  \]
Therefore, using continuity of $\tilde{A}_{\alpha}$ and letting $\epsilon \to 0$, 
 \eqref{eq:claim} and, by employing again Lemma~\ref{lem:convergenceIntegral} for the case $\alpha =0$, the statement of the theorem readily follows.


\bibliographystyle{plain}
\bibliography{ref}

\end{document}